\numberwithin{equation}{section}
\newtheorem{theorem}{Theorem}[section]
\newtheorem{lemma}[theorem]{Lemma}
\newtheorem{proposition}[theorem]{Proposition}
\newtheorem{corollary}[theorem]{Corollary}
\theoremstyle{definition}
\newtheorem{definition}[theorem]{Definition}
\newtheorem{question}[theorem]{Question}
\newtheorem{remark}[theorem]{Remark}
\newcommand{\N}{\mathbb{N}} 
\newcommand{\C}{\mathbb{C}} 
\newcommand{\K}{\mathbb{K}} 
\newcommand{\A}{\mathcal{A}}
\newcommand{\abs}[1]{\left\lvert#1\right\rvert}
\newcommand{\norm}[1]{\lVert#1\rVert}
\newcommand{\subalign}[1]{%
  \vcenter{%
    \Let@ \restore@math@cr \default@tag
    \baselineskip\fontdimen10 \scriptfont\tw@
    \advance\baselineskip\fontdimen12 \scriptfont\tw@
    \lineskip\thr@@\fontdimen8 \scriptfont\thr@@
    \lineskiplimit\lineskip
    \ialign{\hfil$\m@th\scriptstyle##$&$\m@th\scriptstyle{}##$\hfil\crcr
      #1\crcr
    }%
  }%
}
\title{Disjoint frequently hypercyclic pseudo-shifts}
\author[\"O. Martin]{\"Ozg\"ur Martin}
\address[\"O. Martin]{Department of Mathematics, Mimar Sinan Fine Arts University, Silah\c{s}\"or Cad. 71, Bomonti \c{S}i\c{s}li 34380, Istanbul (Turkey)}
\email{ozgur.martin@msgsu.edu.tr}
 \thanks{The first author was partially supported by Mimar Sinan Fine Arts University Scientific Research Project [grant no. 2018-15]}
\author[Q. Menet]{Quentin Menet}
\address[Q. Menet]{Service de Probabilit\'e et Statistique, D\'epartement de Math\'ematique\\ Universit\'{e} de Mons\\ Place du Parc 20\\ 7000 Mons (Belgium)}
\email{quentin.menet@umons.ac.be}
 \thanks{The second author is a Research Associate of the Fonds de la Recherche Scientifique - FNRS. This work was supported in part by
the project FRONT of the French
National Research Agency (grant ANR-17-CE40-0021).}
\author[Y. Puig]{Yunied Puig}
\address[Y. Puig]{Department of Mathematics, Claremont McKenna College, 850 Columbia Avenue, Claremont, CA 91711 (USA)}
\email{puigdedios@gmail.com}
\keywords{Disjoint hypercyclicity, Frequent hypercyclicity, Weighted shift}
\subjclass{47A16, 47B37}
\begin{document}

\maketitle

\begin{abstract}
	We obtain a Disjoint Frequent Hypercyclicity Criterion and show that it characterizes disjoint frequent hypercyclicity for a family of unilateral pseudo-shifts on $c_0(\N)$ and $\ell^p(\N)$, $1\le p <\infty$. As an application, we characterize disjoint frequently hypercyclic weighted shifts. We give analogous results for the weaker notions of disjoint upper frequent and reiterative hypercyclicity. Finally, we provide counterexamples showing that, although the frequent hypercyclicity, upper frequent hypercyclicity, and reiterative hypercyclicity coincide for weighted shifts on $\ell^p(\N)$, this equivalence fails for disjoint versions of these notions.
\end{abstract}

\section{Introduction}
The aim of this paper is to study the notion of disjointness for frequently hypercyclic operators. Frequent hypercyclicity is one of the central notions in linear dynamics and it has been studied extensively since its introduction in 2006 by Bayart and Grivaux \cite{BaGr2}. On the other hand, disjointness in linear dynamics was introduced in 2007 independently by Bernal \cite{Ber} and by B\`es and Peris \cite{BePe07}, and although, disjoint hypercyclic operators have been well studied, to the best of our knowledge, no work has been appeared on disjoint frequently hypercyclic operators. We refer to the recent monographs \cite{BaMa} and \cite{GrPe} for an introduction to linear dynamics.

Weighted shift operators play an important role in linear dynamics. Recall that, for a bounded weight sequence $(w_n)_{n \geq 1}$ in $\C$, the unilateral weighted backward shift $B_w$ on $c_0(\N)$ or $\ell^p(\N)$ is defined by $B_w(e_1) = 0$ and $B_w(e_n)=w_n e_{n-1}$ for $n \geq 2$, where $(e_n)$ is the canonical basis.  Despite their simple form, with their wide-ranging (chaotic) dynamical properties, they provide a good source for examples and counterexamples (one can see \cite{Sal} or the recent papers \cite{BaRu}, \cite{BMPP1}, and \cite{BoGr} among many). Unfortunately, disjoint dynamics of hypercyclic weighted shifts are very limited. As shown in \cite{BeMaSa}, they can never be disjoint weakly mixing and, thus, never be disjoint mixing nor satisfy the Disjoint Hypercyclicity Criterion. In \cite{CoMaSa}, Çolakoğlu, Sanders and the first author characterized disjoint hypercyclicity of pseudo-shifts which are a generalization of the weighted shift operators. 

\begin{definition}
	Let $X = c_0(\N)$ or $\ell^p(\N)$, $1\le p <\infty$, $w = (w_j)_{j\in\N}$ be a weight sequence, and let $f:\mathbb{N}\to \N$ be a strictly increasing map with $f(1) > 1$. The unilateral pseudo-shift $T_{f,w}: X \to  X$ is given by
	\[T_{f,w}(\sum_{j=1}^{\infty}x_je_j)=\sum_{j=1}^{\infty}w_{f(j)}x_{f(j)}e_j,\]
where $\{e_j: j \geq 1\}$ is the canonical basis of $X$.
\end{definition}

Observe that, this family of operators includes the unilateral weighted shifts and their powers which are the source of many examples and counterexamples for disjoint hypercyclic operators (see \cite{BeMaSa}, \cite{BePe07} and \cite{MaSa16}). Indeed, weighted shifts are pseudo-shifts with the inducing map $f(n) = n +1$, $n \in \N$, and a weighted shift raised to the power $r \in \N$ is a pseudo-shift with the inducing map $f(n) = n +r$.

In Section 2, we give a Disjoint $\mathcal{A}$-Hypercyclicity Criterion which is inspired by \cite[Theorem~9]{BMPP1}, where $\mathcal{A} \subset \mathcal{P}(\N)$ is the family of sets of positive lower density, positive upper density, or positive upper Banach density. We also show that this criterion characterizes disjoint $\mathcal{A}$-hypercyclicity  for a family of unilateral pseudo-shifts on $c_0(\N)$ and $\ell^p(\N)$, $p \geq 1$, which also includes unilateral weighted shifts and their powers. 

In Section 3, using a result of Bonilla and Grosse-Erdmann \cite[Theorem 15]{BoGr}, when  $\mathcal{A}$ is the family of  sets of positive upper density or positive upper Banach density, we simplify our characterization of disjoint $\mathcal{A}$-hypercyclic pseudo-shifts and provide it for a larger family of pseudo-shifts. 

In \cite{BaRu} and \cite{BMPP1}, it was shown that the notions of frequent hypercyclicity, upper frequent hypercyclicity, and reiterative hypercyclicity coincide for weighted shifts on $\ell^p(\N)$. In Section~4, we give counterexamples showing that this is no longer true for disjoint versions of these notions. We also show that, on $\ell^p(\N)$, two reiteratively hypercyclic weighted shifts are disjoint reiteratively hypercyclic if and only if they are disjoint hypercyclic. We end the paper with some open questions.

In the rest of the Introduction, we recall the notions that we will use in the following sections. 

Let $\N$ denote the set of natural numbers, $X$ be a separable and infinite dimensional Banach space over the  real or complex scalar field $\K$, and let $\mathcal{L}(X)$ denote the algebra of continuous linear operators on $X$. An operator $T \in \mathcal{L}(X)$ is called {\it hypercyclic} if there exists $x \in X$ such that $\{T^nx:n\in \N\}$ is dense in $X$ and such a vector $x$ is said to be a hypercyclic vector for $T$. 

The lower density, the upper density, and the upper Banach density of a set $A \subset \N$ is defined by
\begin{equation*}
	\mbox{\underline{dens}} A:=\liminf_{N\to \infty}   \frac{\text{card}\{n \leq N: n\in A\}}{N}, \\
\end{equation*}
\begin{equation*}
	\overline{\text{dens}}A:=\limsup_{N\to \infty}   \frac{\text{card}\{n \leq N: n\in A\}}{N},  \mbox{and}\\
\end{equation*}
\begin{equation*}
	\overline{\text{Bd}} A:=\limsup_{N\to \infty} \max_{k \geq 1} \frac{\text{card}\{A \cap [k+1, \ldots, k+N]\}}{N}, \\
\end{equation*}
respectively. Let $\mathcal{A}_\infty$, $\mathcal{A}_{uBd}$, $\mathcal{A}_{ud}$, and $\mathcal{A}_{ld} \subset \mathcal{P}(\N)$ denote the family of infinite sets, sets of positive upper Banach density, sets of positive upper density, and sets of positive lower density, respectively. 

Let $\mathcal{A} = \mathcal{A}_\infty, \mathcal{A}_{uBd}, \mathcal{A}_{ud}$, or $\mathcal{A}_{ld} $. An operator $T \in \mathcal{L}(X)$ is called  $\mathcal{A}$-{\it hypercyclic} if there exists some $x \in X$ such that for every non-empty open set $U \subset X$ we have  $\{n : T^nx \in U\} \in \mathcal{A}$. Such a vector $x$ is called an  $\mathcal{A}$-{\it hypercyclic vector} for the operator $T$. For $\mathcal{A} = \mathcal{A}_\infty, \mathcal{A}_{uBd}, \mathcal{A}_{ud}$, or $\mathcal{A}_{ld}$, $\mathcal{A}$-hypercyclic operators are called {\it hypercyclic}, {\it reiteratively hypercyclic}, {\it upper frequently hypercyclic}, or {\it frequently hypercyclic}, respectively.

For $N \geq 2$, operators $T_1, \ldots,T_N \in \mathcal{L}(X)$ are called {\it disjoint hypercyclic} if the direct sum operator $T_1 \oplus \dots \oplus T_N$ has a hypercyclic vector of the form $(x, \ldots, x) \in X^N$.  We say $T_1, \ldots , T_N$ are {\it disjoint} $\mathcal{A}$-{\it hypercyclic} if there exists a vector $x$ in $X$ such that the vector $(x, \dots, x)$ is an $\mathcal{A}$-hypercyclic vector for the direct sum operator $T_1 \oplus  \cdots \oplus T_N$ on $X^N$. Such a vector $x$ is called a disjoint $\mathcal{A}$-hypercyclic vector for $T_1, \ldots,T_N$. For $\mathcal{A} = \mathcal{A}_\infty, \mathcal{A}_{uBd}, \mathcal{A}_{ud}$, or $\mathcal{A}_{ld}$, disjoint $\mathcal{A}$-hypercyclic operators are called {\it disjoint hypercyclic}, {\it disjoint reiteratively hypercyclic}, {\it disjoint upper frequently hypercyclic}, or {\it disjoint frequently hypercyclic}, respectively.

\section{Disjoint $\A$-hypercyclicity and pseudo shifts}

Inspired by \cite[Theorem 9]{BMPP1}, we first give a Disjoint $\A$-Hypercyclicity Criterion as a sufficient condition for disjoint $\A$-hypercyclicity. As we will have to deal with $N$ operators, we will denote by $[N]$ the set $\{1,\dots,N\}$.


\begin{theorem}[Disjoint $\A$-Hypercyclicity Criterion]\label{Ahypc}
Let $X$ be a separable Banach space,  $T_1,\dots,T_N\in \mathcal{L}(X)$ and $\mathcal{A} = \mathcal{A}_\infty$, $\A_{uBd}$, $\A_{ud}$ or $\mathcal{A}_{ld}$. If there exist a dense set $Y_0$ in $X$, a dense sequence $(y_l)_{l\ge 1}\subset Y_0^N$, $S_n:Y_0^N\rightarrow X$, $n\ge 0$, disjoint sets $A_k\in \A$, $k\ge 1$, and a sequence $(\varepsilon_k)$ of positive real numbers tending to $0$ such that for each $l\ge 1$, each $j\in [N]$,
\begin{enumerate}
\item $\sum_{n\in A_l}S_n y_l$ converges unconditionally in $X$ uniformly in $l$,
\item for any $n\in \bigcup_{k\ge 1}A_k$,
\[\Big\|\sum_{i\in A_l\backslash\{n\}}T_j^nS_iy_l\Big\|\le \varepsilon_l,\]
\item for any $n\in A_l$, any $k<l$,
\[\Big\|\sum_{i\in A_k}T_j^nS_iy_k\Big\|\le \varepsilon_l,\]
\item for any $n\in A_l$,
\[\|T_j^nS_ny_l-y_{l,j}\|\le \varepsilon_l,\]
\end{enumerate}
then $T_1,\dots,T_N$ are disjoint $\A$-hypercyclic.
\end{theorem}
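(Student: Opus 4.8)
The plan is to adapt the classical Frequent Hypercyclicity Criterion argument, producing a single vector $x$ that serves all $N$ operators at once. I would set
\[ x=\sum_{k\ge1}\sum_{i\in A_k}S_iy_k \]
and first verify that this double series converges in $X$. The relevant input is hypothesis~(1): the unconditional convergence of each block sum $\sum_{i\in A_l}S_iy_l$, \emph{uniformly in $l$}, together with the disjointness of the $A_k$ (so that any prescribed finite set of integers meets only finitely many blocks), lets me make the tail of the double series uniformly small and hence guarantees convergence. Unconditionality is recorded so that I may later apply the operators $T_j^n$ term by term and regroup the resulting series at will.

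The core of the proof is a single orbit estimate. I fix a target index $l$ and an integer $n\in A_l$, and expand
\[ T_j^nx=\sum_{k\ge1}\sum_{i\in A_k}T_j^nS_iy_k \]
for each $j\in[N]$. I would split the right-hand side into four groups: the diagonal term $T_j^nS_ny_l$; the remaining same-block terms $\sum_{i\in A_l\setminus\{n\}}T_j^nS_iy_l$; the earlier blocks $k<l$; and the later blocks $k>l$. Hypothesis~(4) places the diagonal term within $\varepsilon_l$ of the target coordinate $y_{l,j}$; hypothesis~(2) bounds the same-block remainder by $\varepsilon_l$; hypothesis~(3) controls the earlier-block contribution; and the later-block contribution is handled again by~(2) --- note that $n\notin A_k$ when $k\ne l$, so there $A_k\setminus\{n\}=A_k$ --- in combination with the uniform tail bound from~(1). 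The decisive feature is that all of these estimates hold \emph{simultaneously for every $j\in[N]$}, with the very same vector $x$ and the very same sets $A_l$; this is exactly what disjointness demands. I thus obtain a bound $\norm{T_j^nx-y_{l,j}}\le\rho_l$, valid for all $n\in A_l$ and all $j$, where $\rho_l\to0$.

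To conclude, I translate this into $\A$-membership. Because $Y_0$ is dense in $X$, the sequence $(y_l)$ is dense in $X^N$, and in an infinite-dimensional space every nonempty open set $V\subseteq X^N$ contains $y_l$ for arbitrarily large $l$. Given $V$, I pick such an $l$ with $\rho_l$ so small that $V$ contains the closed $\rho_l$-neighborhood of $(y_{l,1},\dots,y_{l,N})$; the orbit estimate then yields $A_l\subseteq\{n:(T_1^nx,\dots,T_N^nx)\in V\}$. Since each of the families $\A_\infty,\A_{uBd},\A_{ud},\A_{ld}$ is closed under passing to supersets, $A_l\in\A$ forces $\{n:(T_1^nx,\dots,T_N^nx)\in V\}\in\A$. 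As $V$ was arbitrary, $(x,\dots,x)$ is an $\A$-hypercyclic vector for $T_1\oplus\cdots\oplus T_N$, i.e.\ $T_1,\dots,T_N$ are disjoint $\A$-hypercyclic.

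I expect the main obstacle to lie in the bookkeeping that makes the cumulative error $\rho_l$ genuinely tend to $0$: the contributions of the earlier and later blocks must be summed against~(2) and~(3) in such a way that their total, and not merely each individual block, becomes negligible, and it is precisely the \emph{uniformity in $l$} of the unconditional convergence in~(1) that prevents these cross-block terms from accumulating. By contrast, the term-by-term application of $T_j^n$, the reduction to the dense sequence $(y_l)$, and the superset-closedness of $\A$ are all routine.
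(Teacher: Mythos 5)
Your plan reproduces the paper's skeleton: the same vector $x=\sum_{k\ge 1}\sum_{i\in A_k}S_iy_k$, the same four-way splitting of $T_j^nx$ (diagonal term via (4), same block via (2), earlier blocks via (3), later blocks via (2)), and the same conclusion via density of $(y_l)$ and closedness of $\A$ under supersets. However, the difficulty you flag at the end is a genuine gap, and the repair you propose for it does not work. With only $\varepsilon_k\to 0$, the best bound the hypotheses give for $n\in A_l$ is
\[
\|T_j^nx-y_{l,j}\|\le (l-1)\varepsilon_l+\sum_{k\ge l}\varepsilon_k+\varepsilon_l,
\]
and neither $(l-1)\varepsilon_l$ nor $\sum_{k\ge l}\varepsilon_k$ need tend to $0$ (for $\varepsilon_k=1/\log(k+1)$ the tail series even diverges). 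Hypothesis (1) cannot prevent this accumulation: it concerns the vectors $S_iy_k$ only, and converting it into information about the cross terms $\sum_{i\in A_k}T_j^nS_iy_k$ costs a factor $\|T_j^n\|$, which is unbounded as $n$ runs through the infinite set $A_l$. The same gap already affects your first step: uniform unconditional convergence plus disjointness does \emph{not} make the block sums summable over $k$, so your $x$ need not converge. For instance, with $A_l=\{2^l(2k+1):k\ge 0\}\in\A_{ld}$, a fixed unit vector $e$, $S_ny_l=\frac{1}{l}e$ for $n=\min A_l=2^l$ and $S_ny_l=2^{-n}e$ for the remaining $n\in A_l$, condition (1) holds, yet $\sum_{l}\sum_{n\in A_l}S_ny_l$ diverges, because all block sums point in the direction of $e$ and have norm at least $\frac{1}{l}$.

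The missing idea is the reduction the paper performs before constructing $x$: replace the sequence of pairs $(y_l,A_l)$ by a suitable subsequence $(y_{l_s},A_{l_s})_{s\ge 1}$. Hypotheses (2)--(4) are inherited by any subsequence (for (3), note $t<s$ implies $l_t<l_s$, so the original (3) with $l=l_s$, $k=l_t$ applies); density is preserved by choosing $l_s$ with $\|y_{l_s,j}-y_{s,j}\|\le \frac{1}{s}$ for all $j\in[N]$, possible because a dense sequence in a Banach space meets every ball in infinitely many terms; and one can additionally require $s\varepsilon_{l_s}\le \frac{1}{s^2}$ (so that $\sum_s s\varepsilon_{l_s}<\infty$) and $\min A_{l_s}\ge M_s$, where $M_s$ is furnished by the uniformity in (1) so that $\|\sum_{n\in F\cap A_{l_t}}S_ny_{l_t}\|\le\frac{1}{s^2}$ for all $t\le s$ and all finite $F$ with $\min F\ge M_s$; such $l_s$ exist because the disjointness of the $A_k$ forces all but finitely many blocks to have their minimum beyond $M_s$. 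After this relabelling the block sums are dominated by $\frac{1}{l^2}$, hence $x$ converges unconditionally, and the orbit error is at most $\sum_{k\ge l}k\varepsilon_k+\varepsilon_l\to 0$. In other words, the uniformity in (1) is used to \emph{select} blocks whose sums are summable, not, as your proposal asserts, to control the operator-applied cross-block terms; those are tamed solely by making the $\varepsilon$'s summable against their indices along the extracted subsequence.
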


\begin{proof}
Since $\sum_{n\in A_l}S_n y_l$ converges unconditionally in $X$ uniformly in $l$ and the sets $A_l$ are disjoint, we can find an increasing sequence $(l_s)_{s\ge 1}$ such that
$(y_{l_s})_s$ is dense in $Y_0^N$, such that $\sum_{s=1}^{\infty}s\varepsilon_{l_s}<\infty$ and such that for every $1\le t\le s$, every finite set $F$ with $\min F\ge \min A_{l_s}$, we have
\[
\|\sum_{n\in F\cap A_{l_t}}S_n y_{l_t}\|\le \frac{1}{s^2}.\]
Indeed, it suffices to choose $l_s$ such that $\|y_{l_s,j}-y_{s,j}\|\le \frac{1}{s}$ for every $j\in [N]$, such that $s\varepsilon_{l_s}\le \frac{1}{s^2}$ and such that if we let $M$ the smallest index satisfying  
$\|\sum_{n\in F\cap A_{l}}S_n y_{l}\|\le \frac{1}{s^2}$ for any finite set $F$ with $\min F\ge M$ and any $l\ge 1$, then $\min A_{l_s}\ge M$. 

Without loss of generality, we can thus assume that $\sum_{l=1}^{\infty}l\varepsilon_{l}<\infty$ and that for every $1\le l\le j$, every finite set $F$ with $\min F\ge \min A_{j}$, we have
\[
\|\sum_{n\in F\cap A_{l}}S_n y_{l}\|\le \frac{1}{j^2}.\]

Let $x:=\sum_{l=1}^{\infty}\sum_{n\in A_l}S_ny_l$. We show that $x$ is well-defined and that $x$ is a disjoint $\A$-hypercyclic vector for $T_1,\dots,T_N$. Let $F$ be a finite subset with $\min F\ge \min A_j$ then
\begin{align*}
\sum_{l=1}^{\infty}\Big\|\sum_{n\in F\cap A_l}S_ny_l\Big\|&\le \sum_{l=1}^{j}\Big\|\sum_{n\in F\cap A_{l}}S_ny_l\Big\|+\sum_{l=j+1}^{\infty}\Big\|\sum_{n\in F\cap A_{l}}S_ny_l\Big\|\\
&\le \sum_{l=1}^{j} \frac{1}{j^2}+ \sum_{l=j+1}^{\infty} \frac{1}{l^2}\xrightarrow[j\to \infty]{} 0.
\end{align*}

The vector $x$ is thus well-defined. On the other hand, for any $l\ge 1$, $j\in [N]$ and $n\in A_{l}$, we get
\begin{align*}
\| T_j^nx-y_{l,j}\|&\le \sum_{k=1}^{l-1}\Big\|\sum_{i\in A_k}T_j^nS_iy_k\Big\|+\sum_{k=l}^{\infty}\Big\|\sum_{i\in A_k\backslash\{n\}}T_j^nS_iy_k\Big\|+\|T_j^nS_ny_l-y_{l,j}\|\\
 &\le (l-1)\varepsilon_l+\sum_{k=l}^{\infty} \varepsilon_k+\varepsilon_l\le \sum_{k=l}^{\infty} k\varepsilon_k+\varepsilon_l\xrightarrow[l\to \infty]{} 0.
 \end{align*}
The vector $x$ is thus a disjoint $\mathcal{A}$-hypercyclic vector for $T_1,\dots,T_N$.
\end{proof}

The above criterion relies on the existence of disjoint sets $A_k\in \mathcal{A}$ with convenient separation property. We already know that for any sequence of positive integers $(N_j)$, there exists a family $(A_j)_{j\ge 1}$ of disjoint sets in $\mathcal{A}_{ld}$ such that for every $j\ge 1$, $\min(A_j)\ge N_j$ and for every $n\in A_j$, $m\in A_{j'}$ with $n\ne m$, $|n-m|\ge N_j+N_{j'}$ (see \cite{BaGr2} and \cite{BoGr07}). However we will need here a more general result allowing us to deal with other densities, with multiples and allowing us to extract the sequence $(A_j)_{j\ge 1}$ from another sequence $(B_j)_{j\ge 1}\subset \mathcal{A}$.


\begin{lemma}
\label{lemma.polansky2}
Let  $\mathcal{A} = \mathcal{A}_\infty$, $\mathcal{A}_{uBd}$, $\mathcal{A}_{ud}$ or $\mathcal{A}_{ld}$, let $(B_j)_{j\ge 1}$ be a family of sets in $\mathcal{A}$, let $(N_j)$ be a sequence of positive integers and let $Q\in \mathbb{N}$. There exists a family $(A_j)_{j\ge 1}$ of disjoint sets in $\mathcal{A}$ such that for every $j\ge 1$, $A_j\subset B_j$, $\min(A_j)\ge N_j$ and for every $n\in A_j$, $m\in A_{j'}$ with $n\ne m$, for every $1\le q,q'\le Q$, $|qn-q'm|\ge N_j+N_{j'}$.
\end{lemma}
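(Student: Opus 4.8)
The plan is to build all of the $A_j$ at once by extracting clusters of elements from the $B_j$ along a single increasing sequence of geometric windows. I would first pass from $B_j$ to a much sparser subset $B_j'\subseteq B_j$ lying in the same class $\mathcal{A}$, of prescribed density $\delta_j'\le\ldens(B_j)$ chosen so small (a suitable constant depending on $Q$ times $2^{-j}/N_j$) that $\sum_j\delta_j'N_j$ falls below a threshold depending only on $Q$; this is possible because one may always thin a set of $\mathcal{A}$ to a subset of $\mathcal{A}$ of any smaller positive density, and the decaying choice of $\delta_j'$ is forced by the budget computation below. Throughout I keep in mind the inclusions $\mathcal{A}_\infty\supseteq\mathcal{A}_{uBd}\supseteq\mathcal{A}_{ud}\supseteq\mathcal{A}_{ld}$: only $\mathcal{A}_{ld}$, which demands the right density at \emph{every} scale, is genuinely delicate, so I describe that case; for $\mathcal{A}_{ud}$ and $\mathcal{A}_{uBd}$ it suffices to fill windows along a subsequence of scales, and $\mathcal{A}_\infty$ is a bare greedy diagonal argument.

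Let $W_k=\{n:R_k\le n<R_{k+1}\}$ with $R_{k+1}=\rho_kR_k$, $R_k\to\infty$, and call $j$ \emph{active} at window $k$ once $R_k\ge 2QN_j$. The geometric scaling localises the multiplicative condition: if $n\in W_k$ and $q\le Q$ then $qn<QR_{k+1}<R_{k+2}$ as soon as $\rho_{k+1}>Q$, so multiplying by $q\le Q$ moves an element up by at most one window. Consequently, for $n\in W_k$, $m\in W_{k'}$ with $|k-k'|\ge 2$ and any $q,q'\le Q$, the products $qn$ and $q'm$ fall in disjoint ranges and, since the sets are started high enough ($R_k\ge 2QN_j$), $|qn-q'm|\ge N_j+N_{j'}$ automatically. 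All remaining danger is between elements sitting in the same or in two adjacent windows, which reduces the global problem to a local extraction inside each such block.

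Inside a block I would choose the clusters greedily from the $B_j'$, treating the active sets in order of \emph{decreasing} $N_j$, so that when a set $j$ is reached the only obstructions come from the sparser, higher-separation sets already placed. A retained element $a$ of a set $j'$ forbids a later element $b$ of set $j$ only where $|qb-q'a|<N_j+N_{j'}$ for some $q,q'\le Q$, a union of intervals of total length at most a $Q$-dependent constant times $N_j+N_{j'}\le 2N_{j'}$; summing over the at most $\sum_{j'}\delta_{j'}'R_{k+1}$ elements placed before set $j$ bounds the forbidden part of $W_k$ by a constant times $R_{k+1}\sum_{j'}\delta_{j'}'N_{j'}$, which the choice of $\delta_j'$ keeps below $R_{k+1}/2$. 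Hence every set finds room. Taking $\rho_k>2/\delta_j'$ for each $j$ active at $k$ forces $|B_j'\cap W_k|\ge\delta_j'R_{k+1}/2$, so each active $B_j'$ meets every window densely enough to supply its cluster---this is exactly what lets the constraint $A_j\subseteq B_j$ be met. Placing each cluster at the low end of its window, so that the cluster in $W_k$ already contributes on the order of $\delta_j'R_{k+1}$ points by scale $R_{k+1}$, one checks that the running density of $A_j$ stays above a fixed multiple of $\delta_j'$; thus $A_j\in\mathcal{A}_{ld}$, while $\min A_j\ge N_j$ and disjointness are built in.

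The step I expect to be the main obstacle is keeping these competing demands consistent in the $\mathcal{A}_{ld}$ case. The tension is that a set whose separation is comparable to the current window scale must space its points by that much and so eats a fixed proportion of the window on its own; worse, when placed it is blocked over a band of that width by \emph{every} previously chosen point. Two devices neutralise this: processing the sets in decreasing order of $N_j$, which guarantees that a set is obstructed only by sparser ones and makes the total forbidden length telescope into $\sum_j\delta_j'N_j$; and thinning the $B_j$ so aggressively that this sum is tiny. One must then allow the window ratios $\rho_k$ to grow without bound in order to accommodate the ever sparser $B_j'$, and verify---against the naive fear that the $N_{j'}$-neighbourhoods of the earlier sets swallow every scale---that clustering in geometrically growing windows nevertheless preserves positive lower density for each set. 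Reconciling these three budgets, the thinning $\sum_j\delta_j'N_j$, the growth of $\rho_k$, and the constant-factor loss from greedy multiplicative blocking, is the heart of the argument; the classes $\mathcal{A}_{uBd}$, $\mathcal{A}_{ud}$ and $\mathcal{A}_\infty$ then follow by filling only a subsequence of windows or by dropping the density bookkeeping altogether.
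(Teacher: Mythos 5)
Your localization idea is correct---once the window ratios exceed $Q$, multiplication by $q\le Q$ moves a point up by at most one window, so all constraints become local---and your plan for $\mathcal{A}_\infty$, $\mathcal{A}_{ud}$ and $\mathcal{A}_{uBd}$ (clusters along a subsequence of windows, chosen diagonally) is essentially workable and close in spirit to what the paper does for $\mathcal{A}_{uBd}$. The trouble is in the $\mathcal{A}_{ld}$ case, which you rightly call the heart of the matter, and there your argument has a genuine gap: your budget has the wrong shape. You thin to densities $\delta_j'\approx c_Q 2^{-j}/N_j$ so that the single global sum $\sum_j\delta_j'N_j$ is below a $Q$-dependent threshold, and you conclude that since the forbidden zones occupy measure at most $R_{k+1}/2$, ``every set finds room.'' Room in the window, yes; room in $B_j'$, no. The set $A_j$ must be chosen inside $B_j'\cap W_k$, which has only about $\delta_j' R_{k+1}$ points, while the forbidden zones have measure of order $c_Q R_{k+1}$, a quantity that does \emph{not} shrink with $j$; since nothing prevents those zones from containing every point of $B_j'\cap W_k$ (the sets $B_j$ are arbitrary and may be adversarially correlated), every set with $\delta_j'\ll c_Q$, i.e.\ all but finitely many, may find no admissible point at all. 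What is needed is a hierarchical condition: for each $j$, the sum of $\delta_{j'}'N_{j'}$ over the sets processed \emph{before} $j$ must be small compared with $\delta_j'/Q^2$, so that the forbidden zones cannot swallow $B_j'\cap W_k$. This ``each set drastically sparser than all earlier ones'' requirement is exactly the paper's standing assumption $24Q^3D_{j+1}\le d_j/(2N_j^2)$, and it cannot be replaced by your single global inequality.

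A second, related defect is architectural. You concede that $\rho_k\to\infty$ is forced, and you place clusters ``at the low end'' of each window. But the earlier (sparser) sets also sit at the low end, and in the adversarial situation above their neighbourhoods can block every point of $B_j'$ up to height roughly $R_k+P/\delta_j'$ with $P\approx R_{k+1}\sum_{j'}\delta_{j'}'$; at that scale $A_j$ still has only its $\approx\delta_j'R_k$ points from earlier windows, so the running density is of order $\delta_j'R_k\delta_j'/P\lesssim Q^2\delta_j'/\rho_k\to0$, i.e.\ $\underline{\mathrm{dens}}(A_j)=0$ even under the corrected budget. This is why the paper does \emph{not} push cross-set conflicts through a shared window sequence at all: it takes $A_j$ to be the arithmetic thinning $\{b_{j,2^jN_j^2n}:n\ge1\}$ of $B_j$ minus the set of points lying within $2N_k$ of $\tfrac{q}{q'}b_{k,2^kN_k^2 n}$ for some $k>j$ and $q,q'\le Q$; the removed set has small \emph{upper} density by the hierarchical assumption, so positive lower density survives by simple subtraction, with no window bookkeeping and no loss from large ratios. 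Geometric windows appear in the paper only for the one constraint your scheme and theirs both need them for---the same-set, distinct-multiplier condition $|qn-q'm|\ge 2N_j$---and there only one density scale is involved, so the window ratio can be kept \emph{bounded} (the exponents $m_k$ are confined to $[(k-1)M+K,kM)$), which is what preserves lower density. I would restructure your proof along this split rather than trying to force all constraints through one window sequence with unbounded ratios.
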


\begin{proof} Without loss of generality, we can assume that $(N_j)$ is increasing.\\
\text{}\\
For the case $\mathcal{A} = \mathcal{A}_\infty$, a simple diagonalization argument suffices to define the family $(A_j)_{j \geq 1} = (\{a_{j,n}:n\ge 1\})_{j \geq 1}$. In step 1, we choose $a_{1,1}\in B_1$ such that $a_{1,1} \ge N_1$. In step 2, we choose $a_{1,2} \in B_1$ such that $a_{1,2} \ge Q  a_{1,1} + 2N_1$ and $a_{2,1} \in B_2$ such that $a_{2, 1} \ge Qa_{1,2} + N_1 + N_2$. Finally, assume that we have chosen $\{a_{i,j}: 1 \leq i,j \leq k-1 \mbox{ and } i+j\le k\}$ such that $a_{i,n} \ge Qa_{j,m}+ N_i + N_j$ for any $1 \leq i,j \leq k-1$ with 
$i+n\ge j+m$ or with $i+n=j+m$ and $i>j$. We can now choose $a_{1,k} \in B_1$ such that $a_{1,k} \ge Qa_{k-1,1} + N_1 + N_{k-1}$ and then, for $i = 2, \ldots, k$, in increasing order, choose $a_{i,k+1-i} \in B_i$ such that $a_{i,k+1-i} \ge Qa_{i-1, k + 2 - i}  + N_i + N_{i-1}$. Continuing this way, we will have the desired result if we define $A_j := \{a_{j,n}: n \geq 1\}$ for every $j \geq 1$.\\ 

For $\mathcal{A}_{uBd}$, we proceed in a similar way. Let $0<D_j<\overline{Bd}(B_j)$. In step 1, we choose $n_{1,1}=1$ and $a_{1,1}\in B_1$ such that $a_{1,1} \ge N_1$. In step 2, choose $a_{1,n_{1,1}+1},\dots,a_{1,n_{1,2}} \in B_1$ for some $n_{1,2}>n_{1,1}$ such that 
$a_{1,2}\ge Q a_{1,1}+2N_1$, $a_{1,n}\ge a_{1,n-1}+2N_1$ for every $n\in (n_{1,1}+1,n_{1,2}]$ and such that there exist $N\ge 2$ and $M\ge QN+2N_1$ such that
 \[\frac{\text{card}(\{a_{1,n}:n\in (n_{1,1},n_{1,2}]\}\cap \{M+1,\dots,M+N\})}{N}\ge \frac{D_1}{2N_1}.\]
  This is done by considering $N\ge 2$ and $M\ge \max\{Q a_{1,1}+2N_1,QN+2N_1\}$ satisfying
 \[\frac{\text{card}(B_1\cap \{M+1,\dots,M+N\})}{N}\ge D_1\]
 and by letting $a_{1,n}=\min\{k\in B_1\cap \{M+1,\dots,M+N\}:k\ge a_{1,n-1}+2N_1\}$ for any $n>n_{1,1}$ while $\{k\in B_1\cap \{M+1,\dots,M+N\}:k\ge a_{1,n-1}+2N_1\}$ is non-empty.
 
 Note that for any $n,m\in (n_{1,1},n_{1,2}]$ with $n\ne m$, any $1\le q,q'\le Q$, $|qa_{1,n}-q'a_{1,m}|\ge 2N_1$. Indeed, if $q=q'$, it follows from the fact that $|a_{1,n}-a_{1,m}|\ge 2N_1$ and if $q>q'$, we have
 \[qa_{1,n}-q'a_{1,m}\ge qM-q'(M+N)\ge M-QN\ge 2N_1.\]

  We then choose $a_{2,1},\dots,a_{2,n_{2,2}} \in B_2$ for some $n_{2,2}$ such that $a_{2,1}\ge Qa_{1,n_{1,2}}+N_1+N_2$ and $a_{2,n}\ge a_{2,n-1}+2N_2$ for every $n\in (1,n_{2,2}]$ and such that there exist $N\ge 2$ and $M\ge QN+2N_2$ satisfying
 \[\frac{\text{card}(\{a_{2,n}:n\in [1,n_{2,2}]\}\cap \{M+1,\dots,M+N\})}{N}\ge \frac{D_2}{2N_2}.\]
Finally, assume that we have chosen $\{a_{i,j}: 1 \leq i \leq k-1 \mbox{ and } 1 \leq j \leq n_{i,k-1}\}$ as above. We now choose $a_{1,n} \in B_1$ with $n\in (n_{1,k-1},n_{1,k}]$ such that $a_{1,n_{1,k-1}+1} \ge Qa_{k-1,n_{k-1,k-1}} + N_1 + N_{k-1}$ and $a_{1,n}\ge a_{1,n-1}+2N_1$ for every $n\in (n_{1,k-1}+1,n_{1,k}]$ so that there exist $N\ge k$ and $M\ge QN+2N_1$ such that
 \[\frac{\text{card}(\{a_{1,n}:n\in (n_{1,k-1},n_{1,k}]\}\cap \{M+1,\dots,M+N\})}{N}\ge \frac{D_1}{2N_1}.\]
 Next, for $i = 2, \ldots, k-1$, in increasing order, we choose $a_{i,n} \in B_i$ with $n\in (n_{i,k-1},n_{i,k}]$ such that $a_{i,n_{i,k-1}+1} \ge Qa_{i-1,n_{i-1,k}} + N_i + N_{i-1}$ and $a_{i,n}\ge a_{i,n-1}+2N_i$ for every $n\in (n_{i,k-1}+1,n_{i,k}]$ so that 
 there exist $N\ge k$ and $M\ge QN+2N_i$ such that
 \[\frac{\text{card}(\{a_{i,n}:n\in (n_{i,k-1},n_{i,k}]\}\cap \{M+1,\dots,M+N\})}{N}\ge \frac{D_i}{2N_i}.\]
It remains to choose $a_{k,1},\dots,a_{k,n_{k,k}}$ for some $n_{k,k}$ such that $a_{k,1}\ge Q a_{k-1,n_{k-1,k}}+N_k+N_{k-1}$ and $a_{k,n}\ge a_{k,n-1}+2N_k$ for every $n\in (1,n_{k,k}]$ and such that 
 there exist $N\ge k$ and $M\ge QN+2N_k$ satisfying
 \[\frac{\text{card}(\{a_{k,n}:n\in [1,n_{k,k}]\}\cap \{M+1,\dots,M+N\})}{N}\ge \frac{D_k}{2N_k}.\]
   Continuing this way, we will have the desired result if we define $A_j := \{a_{j,n}: n \geq 1\}$ for every $j \geq 1$.\\ 

For the case $\mathcal{A} = \mathcal{A}_{ld}$ or $\mathcal{A} = \mathcal{A}_{ud}$, we denote by $d_j$ the lower density of $B_j$ and by $D_j$ the upper density of $B_j$.  Without loss of generality, we can assume that for every $N\ge 1$,
\[\frac{|B_j\cap \{0,\dots,N\}|}{N}\le 2D_j.\]
If $B_j\in \mathcal{A}_{ld}$, we can also assume that $24Q^3D_{j+1}\le \frac{d_{j}}{2N^2_j}$ and if $B_j\in \mathcal{A}_{ud}$, we can assume that $24Q^3D_{j+1}\le \frac{D_{j}}{2N^2_j}$.

 Let $(b_{j,n})_{n\ge 1}$ be an increasing enumeration of $B_j$ and $R=\{\frac{q}{q'}:1\le q,q'\le Q\}$.  We consider
\[A_j=\{b_{j,2^{j}N^2_jn}:n\ge 1\}\backslash\{r b_{k,2^{k}N^2_k n}+l:k > j,\ n \ge 1,\ -2N_k< l< 2N_k,\ r\in R\}\]
where $l$ can be a real number. We then have $A_j\subset B_j$. Moreover, we remark that for every $N\ge 1$, we get
\begin{align*}
&\frac{\text{card}\left(\{r b_{k,2^{k}N^2_k n}+l:k > j, n \ge 1, -2N_k< l< 2N_k,\ r\in R\}\cap \{0,\dots,N\}\right)}{N}\\
&\quad\le \sum_{1\le q,q'\le Q}\sum_{k=j+1}^{\infty}\frac{\text{card}\left(\{\frac{q}{q'}b_{k,2^{k}N^2_k n}+l:n \ge 1, -2N_k< l< 2N_k\}\cap \{0,\dots,N\}\right)}{N}\\
&\quad \le \sum_{k=j+1}^{\infty}\frac{4Q^2N_k\text{card}\left(\{n \ge 1: b_{k,2^{k}N^2_k n}\le Q(N+2N_k)\}\right)}{N}\\
&\quad \le \sum_{k=j+1}^{\infty}\frac{\frac{4Q^2}{2^kN_k}\text{card}\left(\{n \ge 1: b_{k,n}\le Q(N+2N_k)\}\right)}{N}\\
&\quad \le \sum_{k=j+1}^{\infty}\frac{8Q^2}{2^kN_k}\frac{Q(N+2N_k)}{N}D_k\\
&\quad \le \sum_{k=j+1}^{\infty}\frac{8Q^3}{2^kN_k}D_k+\sum_{k=j+1}^{\infty}\frac{16Q^3}{2^kN}D_k
\le \sum_{k=j+1}^{\infty}\frac{24Q^3}{2^k}D_k.
\end{align*}
Therefore, if $B_j\in \mathcal{A}_{ld}$ then
\begin{align*}
\underline{\text{dens}}(A_j)& \ge \underline{\text{dens}}(\{b_{j,2^{j}N^2_jn}:n\ge 1\})\\
&\qquad -\overline{\text{dens}}\left(\{rb_{k,2^{k}N^2_k n}+l:k > j, n \ge 1, -2N_k\le l\le 2N_k,\ r\in R\}\cap \mathbb{N}\right) \\
& \ge \frac{1}{2^jN_j^2}d_j-\sum_{k=j+1}^{\infty}\frac{24Q^3}{2^k}D_k\\
& \ge  \frac{1}{2^jN_j^2}d_j-\frac{1}{2}\sum_{k=j+1}^{\infty}\frac{1}{2^kN_j^2}d_j>0
\end{align*}
while if $B_j\in \mathcal{A}_{ud}$ then
 \begin{align*}
\overline{\text{dens}}(A_j)& \ge \overline{\text{dens}}(\{b_{j,2^{j}N^2_jn}:n\ge 1\})\\
&\qquad -\overline{\text{dens}}\left(\{b_{k,2^{k}N^2_k n}+l:k > j, n \ge 1, -2N_k\le l\le 2N_k,\ r\in R\}\cap \mathbb{N}\right) \\
& \ge \frac{1}{2^jN_j^2}D_j-\sum_{k=j+1}^{\infty}\frac{24Q^3}{2^k}D_k\\
& \ge \frac{1}{2^jN_j^2}D_j-\frac{1}{2}\sum_{k=j+1}^{\infty}\frac{1}{2^kN_j^2}D_j>0.
\end{align*}

Moreover, $\min(A_j)\ge b_{j,2^{j}N^2_j}\ge 2^{j}N^2_j$ and if we consider $n\in A_j$, $m\in A_{j'}$ with $n\ne m$, we have:
\begin{itemize}
\item if $j=j'$ then $|n-m|\ge \inf_k(b_{j,2^{j}N^2_j(k+1)}-b_{j,2^{j}N^2_jk})\ge 2^jN^2_j\ge 2N_j$.
\item if $j>j'$ then for every $1\le q,q'\le Q$, $|qn-q'm|=q' |\frac{q}{q'}n-m|\ge 2q'N_j\ge N_j+N_{j'}$.
\end{itemize}
It remains to show that if $j=j'$, we can get that for every $1\le q\ne q'\le Q$, $|qn-q'm|\ge 2N_j$. To this end, it suffices to show that for any set $A\in \mathcal{A}_{ld}$ (resp. $A\in \mathcal{A}_{ud}$), any $r>1$ and any $N\ge 1$, it is possible to find a subset $A'\subset A$ such that 
$A'\in \mathcal{A}_{ld}$ (resp. $A'\in \mathcal{A}_{ud}$) and such that for every $n\ne m\in A'$, $|n-rm|\ge N$.\\
\text{}\\
Let $A\in  \mathcal{A}_{ld}$, $d$ the lower density of $A$, $r>1$ and $N\ge 1$. We consider $K\in \mathbb{N}$ such that $r^K-r\ge N$ and $M\in \mathbb{N}$ such that $r^{K-M}<\frac{d}{4}$ and such that for any $m\ge M$
\[\frac{\text{card}\left(A\cap [r^K,r^{m}]\right)}{r^m}\ge \frac{d}{2}.\]
 We deduce that there exists $K\le m_1< M$ such that
$\text{card}\left(A\cap [r^{m_1},r^{m_1+1}]\right)\ge \frac{d}{2}(r^{m_1+1}-r^{m_1})$.
Note that \[\frac{\text{card}\left(A\cap (r^{M+K},r^{2M}]\right)}{r^{2M}}\ge \frac{\text{card}\left(A\cap [0,r^{2M}]\right)}{r^{2M}}-\frac{\text{card}\left(A\cap [0,r^{M+K}]\right)}{r^{2M}}\ge  \frac{d}{2}-r^{K-M}\ge \frac{d}{4}.\]
There thus exists $M+K\le m_2< 2M$ such that
$\text{card}\left(A\cap [r^{m_2},r^{m_2+1}]\right)\ge \frac{d}{4}(r^{m_2+1}-r^{m_2})$. More generally, we can find a sequence $(m_k)_{k\ge 1}$ with $(k-1)M+K\le m_k< kM$ such that
\[\text{card}\left(A\cap [r^{m_k},r^{m_k+1}]\right)\ge \frac{d}{4}(r^{m_k+1}-r^{m_k}).\]
By letting $A'=A\cap\bigcup_k [r^{m_k},r^{m_k+1}-N]$, we get $A'\in \mathcal{A}_{ld}$ because for any $L\ge r^{2M}$, if $r^{(k-1)M}\le L<  r^{kM}$, we have
\begin{align*}
\frac{\text{card}\left(A'\cap[0,L]\right)}{L}
&\ge \frac{\text{card}\left(A\cap [r^{m_{k-1}},r^{m_{k-1}+1}-N]\right)}{r^{kM}}\\
&\ge \frac{d}{4}\frac{r^{m_{k-1}+1}-r^{m_{k-1}}}{r^{kM}}-\frac{N}{r^{kM}}\\
&\ge \frac{d}{4}(r-1)\frac{r^{(k-2)M}}{r^{kM}}-\frac{N}{r^{kM}}\ge \frac{d}{4}\frac{r-1}{r^{2M}}-\frac{N}{r^{kM}}
\end{align*}
and $\lim_k \frac{d}{4}\frac{r-1}{r^{2M}}-\frac{N}{r^{kM}}>0$. Moreover, for every $n\ne m\in A'$, $|n-rm|\ge N$ because if $n\in [r^{m_k},r^{m_k+1}-N]$ and $m \in [r^{m_{k'}},r^{m_{k'}+1}-N]$, we have:
\begin{itemize}
\item if $k\le k'$ then $rm-n\ge r^{m_{k'} +1}-n\ge N$;
\item if $k>k'$ then $n-rm\ge r^{m_k}-r^{m_{k-1} +2}\ge r^{(k-1)M+K}-r^{(k-1)M+1}\ge r^{(k-1)M} (r^K-r)\ge N$ by definition of $K$.
\end{itemize}

On the other hand, let $A\in \mathcal{A}_{ud}$, $D$ the upper density of $A$, $r>1$ and $N\ge 1$.  We consider $M$ such that $r^{M}-r^2\ge N$. We then divide $A$ into $M$ sets 
$A^{(m)}=A\cap\bigcup_{k\ge 1} [r^{kM+m},r^{kM+m+1}-N]$ for $m=0,\dots, M-1$. We remark that the sets $A^{(m)}$ are disjoint and that $\overline{\text{dens}}\bigcup_{m=0}^{M-1}A^{(m)}=D$. Therefore, by definition of the upper density, there exists $0\le m<M$ such that
$\overline{\text{dens}}A^{(m)}\ge \frac{D}{2M}$ and it suffices to let $A'=A^{(m)}$.
\end{proof} 

Before investigating pseudo-shifts, it is important to remark that we can extract from pseudo-shifts some weighted shifts with similar dynamical properties. Moreover, we recall that in the case of weighted shifts, we know that the notions of frequent hypercyclicity, upper frequent hypercyclicity, and reiterative hypercyclicity coincide for weighted shifts on $\ell^p(\N)$. More precisely, a weighted shift $B_w$ on $\ell^p(\N)$ satisfies one of these properties if and only if $\sum_{n\ge 2}\frac{1}{\prod_{k=2}^n|w_k|^p}<\infty$ (see \cite{BaRu} and \cite{BMPP1}).

\begin{proposition}\label{propws}
Let $T=T_{f,w}$ be an unilateral pseudo-shift on $X$ where $X=\ell^p(\mathbb{N})$ with $1\le p<\infty$ or $X=c_0(\mathbb{N})$ and $\mathcal{A} = \mathcal{A}_\infty$, $\A_{uBd}$, $\A_{ud}$ or $\mathcal{A}_{ld}$. 
If $T$ is $\mathcal{A}$-hypercyclic on $X$ then for every $j\ge 1$, $B_{v^{(j)}}$ with $v^{(j)}_n=w_{f^{n-1}(j)}$ is $\mathcal{A}$-hypercyclic on $X$.
\end{proposition}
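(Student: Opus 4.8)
The plan is to realize $B_{v^{(j)}}$ as a linear factor of $T$ and then invoke the stability of $\mathcal{A}$-hypercyclicity under factor maps with dense range. First I would record the action of $T=T_{f,w}$ on the canonical basis: since $f$ is strictly increasing with $f(1)>1$ one has $f(i)>i$ for all $i$, and a direct computation gives $Te_i=w_i e_{f^{-1}(i)}$ when $i$ lies in the range of $f$ (with $f^{-1}(i)$ the unique preimage) and $Te_i=0$ otherwise. Writing $O_j=\{f^{n-1}(j):n\ge 1\}$ for the forward $f$-orbit of $j$ (a strictly increasing sequence tending to $\infty$), one then sees that $T$ maps $e_{f^{n-1}(j)}$ to $w_{f^{n-1}(j)}e_{f^{n-2}(j)}=v^{(j)}_n e_{f^{n-2}(j)}$ for $n\ge 2$, staying inside the closed span of $\{e_i:i\in O_j\}$, whereas $Te_j=w_j e_{f^{-1}(j)}$ (or $0$) leaves it, since $f^{-1}(j)<j$ so $f^{-1}(j)\notin O_j$.

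This suggests defining $\pi_j:X\to X$ by $\pi_j(e_{f^{n-1}(j)})=e_n$ for $n\ge 1$ and $\pi_j(e_i)=0$ for $i\notin O_j$. As $\pi_j$ merely selects a subsequence of coordinates and re-indexes it, it is a norm-one surjection; indeed $\pi_j\circ\Phi_j=\mathrm{id}$ where $\Phi_j(e_n)=e_{f^{n-1}(j)}$. The key step is the intertwining identity $\pi_j T=B_{v^{(j)}}\pi_j$, which I would verify on basis vectors (here $B_{v^{(j)}}$ is bounded since $w$ is bounded): on $e_{f^{n-1}(j)}$ with $n\ge 2$ both sides equal $v^{(j)}_n e_{n-1}$; on $e_j$ both sides vanish because $f^{-1}(j)\notin O_j$; and for $i\notin O_j$ the right-hand side is $0$, while the left-hand side is $0$ because, by injectivity of $f$, $f^{-1}(i)\in O_j$ would force $i\in O_j$. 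Hence $\pi_j T^n=B_{v^{(j)}}^n\pi_j$ for every $n$.

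Finally I would transfer the dynamical property. If $x$ is an $\mathcal{A}$-hypercyclic vector for $T$, set $y=\pi_j x$. For any nonempty open $V\subset X$ the set $\pi_j^{-1}(V)$ is open and nonempty (as $\pi_j$ is surjective), and the intertwining gives
\[\{n:B_{v^{(j)}}^n y\in V\}=\{n:\pi_j T^n x\in V\}=\{n:T^n x\in \pi_j^{-1}(V)\}\in \mathcal{A},\]
so $y$ is an $\mathcal{A}$-hypercyclic vector for $B_{v^{(j)}}$; this equality of return sets is precisely why the argument works uniformly for $\mathcal{A}_\infty,\mathcal{A}_{uBd},\mathcal{A}_{ud},\mathcal{A}_{ld}$. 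The only genuinely delicate point is the bookkeeping behind the intertwining identity — namely that the complement of $O_j$ is invariant under the partially defined map $f^{-1}$, so that mass pushed out of $\mathrm{span}\{e_i:i\in O_j\}$ by $T$ is annihilated by $\pi_j$ and never re-enters; everything else is a soft factor-map argument.
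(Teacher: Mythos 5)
Your proof is correct and follows essentially the same route as the paper: your map $\pi_j$ is exactly the paper's factor map $\phi(x)=\sum_{k\ge 1}x_{f^{k-1}(j)}e_k$, and your intertwining identity $\pi_j T=B_{v^{(j)}}\pi_j$ is the same quasi-conjugacy relation the paper uses. The only difference is that the paper simply cites the preservation of $\mathcal{A}$-hypercyclicity under quasi-conjugacy (dense range suffices), whereas you verify the intertwining on basis vectors and unwind the transfer argument explicitly, even upgrading dense range to surjectivity via the section $\Phi_j$.
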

\begin{proof}
Let $j\ge 1$ and let $\phi$ be the map defined by $\phi(x)=\sum_{k=1}^{\infty}x_{f^{k-1}(j)} e_k$. We then get that for every $x\in X$,
\[\phi T x=B_{v^{(j)}}\phi x.\]
In other words, since $\phi$ is continuous and has dense range, the operator $B_{v^{(j)}}$ is quasi-conjugate to $T$. The desired result then follows from the fact that $\mathcal{A}$-hypercyclicity is preserved under quasi-conjugacy.
\end{proof}

In order to be able to characterize disjoint $\mathcal{A}$-hypercyclic pseudo-shifts on $\ell^p(\mathbb{N})$, we need to restrict ourselves to pseudo-shifts satisfying the following condition.
 
\begin{definition}
Let $f_s:\N\to \N, s\in [N]$ be increasing maps and $\A$ be a family of subsets of $\N$. The family $\A$ is called \emph{$(f_s)_{s=1}^N$-weakly partition regular} if for any sequence $(B_l)_{l\ge 1}\subset \A$ there exists a sequence $(A_l)_{l\ge 1}\subset \A$ such that for every $l\ge 1$, $A_l\subseteq B_l$ and for every $l,k\ge 1$,
\[
f_s^m([l])\cap f_t^n([k])=\emptyset  \qquad \forall m\in A_l, \forall n\in A_k, m\neq n, \forall s, t\in [N].
\]
\end{definition}

A characterization of disjoint hypercyclic pseudo-shifts on $\ell^p(\mathbb{N})$ has already been given in \cite{CoMaSa}. Therefore, we focus on the notions of disjoint reiterative, upper frequent and frequent hypercyclicity. 


\begin{theorem}[Characterization for pseudo-shifts on $\ell^p(\mathbb{N})$]\label{caraclp}
\label{theorem_charact_lp}
Let $T_1=T_{f_1,w_1},\dots,T_N=T_{f_N,w_N}$ be unilateral pseudo-shifts on $\ell^p(\mathbb{N})$ with $1\le p<\infty$ and $\mathcal{A} = \A_{uBd}$, $\A_{ud}$ or $\mathcal{A}_{ld}$. If $\A$ is $(f_s)_{s=1}^N$-weakly partition regular then $T_1,\dots,T_n$ are disjoint $\mathcal{A}$-hypercyclic if and only if
 there exist sets $A_k\in \A$, $k\ge 1$, and a sequence $(\varepsilon_k)_{k\ge 1}$ of positive real numbers tending to $0$ such that
 \begin{enumerate}
\item for any $j\ge 1$ and any $s\in [N]$, we have  $\sum_{n\ge 1}\frac{1}{|W^s_{j, n}|^p}<\infty$,


\item for any $1\le j\le l$, any $k\ge 1$, any $n\in A_k$, any $s\ne t\in[N]$,
\[\sum_{m\in A_l\backslash\{n\}: f_t^m(j)\in f_s^n((k,+\infty))} \frac{\abs{W^s_{f_s^{-n}(f_t^m(j)),n}}^p}{\abs{W^t_{j,m}}^p}\le \varepsilon_l,\]
\item for any $1\le j\le k<l$, any $n\in A_l$, any $s\ne t\in [N]$
\[\sum_{m\in A_k:f_t^m(j)\in f_s^n((l,+\infty))} \frac{\abs{W^s_{f_s^{-n}(f_t^{m}(j)),n}}^p}{\abs{W^t_{j,m}}^p}\le \varepsilon_l,\]
\item for any $l\ge 1$, any $(a_{s,j})_{s\in[N],j\in [l]}$ with $a_{s,j}\ne 0$, any $\varepsilon>0$, there exists $L\ge l$ such that for any $s\ne t\in[N]$, any $n\in A_L$, 
\begin{enumerate}
\item for any $j\in f^n_t([l])\cap f_s^n([l])$, 
\[
\left|\frac{W^s_{f_s^{-n}(j),n}}{W^t_{f_t^{-n}(j),n}}-\frac{a_{s,f_s^{-n}(j)}}{a_{t,f_t^{-n}(j)}}\right|\le \varepsilon
\]
\item for any $j\in f^{n}_t([l])\cap f_s^{n}((l, \infty))$
\[
\frac{\abs{W^s_{f_s^{-n}(j),n}}}{\abs{W^t_{f_t^{-n}(j),n}}}\le \varepsilon,
\]
\end{enumerate}
\end{enumerate}
where $W^s_{l, n}=\prod_{m=1}^nw_{s, f_s^m(l)}$, for all $s\in [N]$.
\end{theorem}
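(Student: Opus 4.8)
I would prove the two implications separately, using throughout the basic identity $(T_s^nx)_i = W^s_{i,n}\,x_{f_s^n(i)}$; equivalently $T_s^n e_{f_s^n(i)}=W^s_{i,n}e_i$ and $T_s^n e_k=0$ when $k\notin f_s^n(\N)$. This turns every statement about the orbits of the $T_s$ into a statement about the single scalar sequence $(x_j)$ read along the branches $j=f_s^n(i)$. The sufficiency direction is obtained by feeding the data $(A_k)$, $(\eps_k)$ into the Disjoint $\A$-Hypercyclicity Criterion (Theorem \ref{Ahypc}), after thinning the $A_k$ by means of Lemma \ref{lemma.polansky2} and the $(f_s)$-weak partition regularity hypothesis; the necessity direction is obtained by decoding the approximation $\norm{T_s^nx-y_s}<\delta$ of a disjoint $\A$-hypercyclic vector $x$ into pointwise and $\ell^p$-estimates on $(x_j)$.

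\textbf{Sufficiency.} Take $Y_0=\operatorname{span}\{e_j\}$ and a dense sequence $(y_l)_{l\ge1}\subset Y_0^N$ with $y_{l,s}=\sum_{i=1}^{l}a^{(l)}_{s,i}e_i$ and all $a^{(l)}_{s,i}\ne0$. For each $l$, condition (4) applied to the coefficients $(a^{(l)}_{s,i})$ and a precision decreasing in $l$ produces a level $L_l\ge l$ at which the weight ratios align with the coefficient ratios; I take block $l$ of the Criterion to be a subset of $A_{L_l}$. Using Lemma \ref{lemma.polansky2} together with $(f_s)$-weak partition regularity, I further shrink these to pairwise disjoint sets $\tilde A_l$ with $\min\tilde A_l\to\infty$ fast and with the active branches $f_s^m([l])$ pairwise disjoint across distinct steps $m$; since passing to subsets only decreases the sums in (2)--(3), those bounds persist. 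For $n\in\tilde A_l$ I then set
\[
S_n y_l=\sum_{s=1}^N\sum_{i=1}^{l}\frac{a^{(l)}_{s,i}}{W^s_{i,n}}\,e_{f_s^n(i)},
\]
keeping a single coefficient at any position hit by several branches $f_s^n(i)=f_t^n(i')$ (say the one of least index $s$); condition (4a) gives $a^{(l)}_{s,i}/W^s_{i,n}\approx a^{(l)}_{t,i'}/W^t_{i',n}$, so this choice costs only $O(\eps)$. Now clause (i) of Theorem \ref{Ahypc} follows from (1), since $\norm{S_ny_l}^p\asymp\sum_{s,i}\abs{a^{(l)}_{s,i}}^p/\abs{W^s_{i,n}}^p$ and $\sum_{n\in\tilde A_l}\abs{W^s_{i,n}}^{-p}<\infty$; clause (iv) is the single-step recovery, exact on isolated branches, controlled at collisions by (4a) and at off-window images by (4b); and clauses (ii)--(iii) are precisely the cross-step interference sums bounded in (2)--(3), the weak partition regularity having annihilated every cross term landing in an active window.

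\textbf{Necessity.} Let $x$ be a disjoint $\A$-hypercyclic vector. Since $\A$ is closed under supersets, each $T_s$ is $\A$-hypercyclic with the common vector $x$, so Proposition \ref{propws} and the weighted-shift characterization $\sum_n\abs{W^s_{j,n}}^{-p}<\infty$ give (1). For the other conditions I fix a dense sequence of targets $(y_l)$ as above and work with hitting sets of the form $A_l\subseteq\{n:\norm{T_s^nx-y_{l,s}}<\delta_l\ \forall s\}\in\A$. For $n\in A_l$ the inequality $\norm{T_t^nx-y_{l,t}}<\delta_l$ reads coordinatewise as $W^t_{j,n}x_{f_t^n(j)}\approx a_{t,j}$ for $j\le l$ together with $\sum_{j>l}\abs{W^t_{j,n}x_{f_t^n(j)}}^p<\delta_l^p$, i.e.\ $x_{f_t^n(j)}\approx a_{t,j}/W^t_{j,n}$ and smallness off the window. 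Reading two operators at a shared branch position $f_s^n(i)=f_t^n(i')$ yields (4a); reading one operator at a position that is an off-window branch of another yields (4b); and substituting $x_{f_t^m(j)}\approx a_{t,j}/W^t_{j,m}$ into the off-window $\ell^p$-tail of $T_s^nx$, whose images $f_s^{-n}(f_t^m(j))>k$ are distinct by separation, produces exactly the sums of (2) and (3).

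\textbf{Main obstacle.} The crux is the necessity direction: conditions (2)--(3) couple distinct blocks $A_k,A_l$, and a single family $(A_k)$ with one null sequence $(\eps_k)$ must satisfy all of (1)--(4) simultaneously, whereas a naive choice of hitting sets only controls the tail $\norm{T_s^nx}_{>k}$ through $\delta_k$. I would therefore build the $A_k$ by a recursive exhaustion over all (finite target, precision) demands, choosing $\delta_k\to0$ fast, using that each hitting set lies in $\A$, and thinning each newly constructed block so that both the interference it creates with earlier blocks and the interference future blocks receive from it stay below the prescribed $\eps_l$. A secondary technical point, on the sufficiency side, is the bookkeeping at the within-step branch collisions $f_s^n(i)=f_t^n(i')$, which is exactly where hypothesis (4a) is consumed.
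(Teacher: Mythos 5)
Your sufficiency argument follows the paper's route (same operators $S_n$, same least-index bookkeeping at collision positions, same use of Lemma~\ref{lemma.polansky2} plus weak partition regularity, then Theorem~\ref{Ahypc}), but your necessity direction has a genuine gap, and it is exactly at the point you flag as the ``main obstacle''. Your hitting sets $A_l\subseteq\{n:\|T_s^nx-y_{l,s}\|<\delta_l\ \forall s\}$ use the \emph{unscaled} targets $y_{l,s}$. Then for $m\in A_l$ and $j\le l$ you only get the lower bound $|W^t_{j,m}x_{f_t^m(j)}|\ge \min_j|a_{t,j}^{(l)}|-\delta_l$, a quantity that stays bounded (and in fact typically tends to $0$ along any dense sequence of targets). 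In condition (2), after multiplying and dividing by $|x_{f_t^m(j)}|^p$, the numerators sum to at most the tail $\tau_k^p$ of $\|T_s^nx-C_ka^k_s\|^p$ beyond coordinate $k$, where $k$ is the index of the block containing $n$; so the best bound you can produce is $\tau_k^p/(\min_j|a^{(l)}_{t,j}|-\delta_l)^p$. Taking $k=1$ fixed and $l\to\infty$, this does \emph{not} tend to $0$, whereas the theorem demands a bound $\varepsilon_l\to 0$ valid for \emph{all} $k$ and all $n\in A_k$ simultaneously. None of your proposed remedies repairs this: fast decay of $\delta_k$ is useless because the offending numerator is controlled by the \emph{fixed} precision $\delta_1$ of the first block; and thinning $A_l$ (raising $\min A_l$) only shrinks the numerator tail for each \emph{fixed} $n$, not uniformly over $n\in A_1$ — for $n\in A_1$ huge, the interfering $m\in A_l$ are comparably huge, so the relevant positions $f_s^{-n}(f_t^m(j))$ again start right after $k$, and the bound reverts to $\tau_1^p/(\min_j|a^{(l)}_{t,j}|-\delta_l)^p$. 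The paper's key idea, which your proposal is missing, is to take hitting sets around \emph{growing multiples} of the targets: $A_l=\{n:\|T_s^nx-C_la^l_s\|<\tau_l\ \forall s\}$ with $C_l=l/\min_{s,j}|a^l_{s,j}|$, which forces $|W^t_{j,m}x_{f_t^m(j)}|\ge l-\tau_l$ on $A_l$; the denominator then grows with $l$ and one gets $\varepsilon_l=\max\{\tau_1^p/(l-\tau_l)^p,\,\tau_l^p/(1-\tau_1)^p\}\to 0$. This scaling is harmless for condition (4) because (4) only involves ratios of coefficients, which are scale-invariant.

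A secondary, smaller inaccuracy sits in your sufficiency sketch: you assert that clauses (ii)--(iii) of Theorem~\ref{Ahypc} ``are precisely the cross-step interference sums bounded in (2)--(3)''. But (2)--(3) only concern $s\ne t$; when the interference comes from the \emph{same} operator ($s=t$), the expansion of $T_s^nS_iy_l$ still contains nonzero terms (for pseudo-shifts $f_t^i(j)\in f_s^n(\N)$ is not excluded by the window-disjointness), and these must be handled separately. The paper does this using condition (1): the gap structure imposed on the blocks ($|i-n|\ge N_l+N_k+\cdots$, with $N_l$ chosen so that $\sum_{n\ge N_l}|W^s_{j,n}|^{-p}<\rho_l$) forces $i>n+N_l$, and then the telescoping identity $W^s_{f_s^{-n}(f_s^i(j)),n}/W^s_{j,i}=1/W^s_{j,i-n}$ reduces the diagonal sum to a tail of the convergent series in (1). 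Your ``$\min\tilde A_l\to\infty$ fast'' gestures in this direction but the telescoping reduction and the coupling of the block gaps to the tails of $\sum_n|W^s_{j,n}|^{-p}$ is the step that actually closes it.
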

\begin{proof}
Suppose that $T_1,\dots,T_N$ possess a disjoint $\mathcal{A}$-hypercyclic vector $x$. Let $j\ge 1$ and $s\in [N]$. From Proposition~\ref{propws}, we deduce that
the unilateral weighted shift $B_{v^s_{j}}$ with $v^s_{j,n}=w_{s,f_s^{n-1}(j)}$ is $\mathcal{A}$-hypercyclic on $\ell^p(\mathbb{N})$ and it follows from the characterization of $\mathcal{A}$-hypercyclic weighted shifts on $\ell^p(\mathbb{N})$ that
\[\sum_{n\ge 1}\frac{1}{|W^s_{j, n}|^p}<\infty.\]
We now consider a dense sequence $(a^l)_{l\ge 1}\subset \ell^p(\mathbb{N})^N$ such that for every $s\in [N]$, 
\[\text{deg}(a^l_s)\le l\quad\text{and}\quad a^{l}_{s,j}\ne 0 \quad\text{for every $j\le l$.}\]
Let $C_l=\frac{l}{\min_{s\in [N],j\in [l]}|a^l_{s,j}|}$. We then let $A_l=\{n\ge 0:\|T_s^nx-C_l a^l_s\|<\tau_l\ \text{for every $s\in[N]$}\}$ where $(\tau_l)_{l\ge 1}\subset  (0,1)$ decreases to $0$.

It follows that for every $1\le j\le k$, every $s\in [N]$ and every $n\in A_k$,
\[|W^s_{j,n}x_{f_s^n(j)}-C_k a^k_{s,j}|<\tau_k\]
and thus 
\[0<k-\tau_k \le C_k |a^k_{s,j}|-\tau_k\le |W^s_{j,n}x_{f_s^n(j)}|.\]
In particular, for every $1\le j\le k$, every $s\in [N]$ and every $n\in A_k$, $x_{f_s^n(j)}\ne 0$.
On the other hand, for every $k\ge 1$, every $s\in [N]$ and every $n\in A_k$,
\[\sum_{j>k}|W^s_{j,n}x_{f_s^n(j)}|^p<\tau^p_k.\]
Therefore, for any $1\le j\le l$, any $k\ge 1$, any $n\in A_k$, any $s\ne t\in[N]$,
\begin{align*}
\sum_{m\in A_l\backslash\{n\}: f_t^m(j)\in f_s^n((k,+\infty))}  \frac{\abs{W^s_{f_s^{-n}(f_t^m(j)),n}}^p}{\abs{W^t_{j,m}}^p}&\le 
\sum_{m\in A_l\backslash\{n\}: f_t^m(j)\in f_s^n((k,+\infty))} \frac{\abs{W^s_{f_s^{-n}(f_t^m(j)),n}x_{f_{t}^m(j)}}^p}{\abs{W^t_{j,m}x_{f_t^m(j)}}^p}\\
&\le \frac{\tau_k^p}{(l-\tau_l)^p}
\le \frac{\tau_1^p}{(l-\tau_l)^p}.
\end{align*}

In the same way, for any $1\le j\le k<l$, any $n\in A_l$ and any $s\ne t\in [N]$, we have

\begin{align*}
\sum_{m\in A_k:f_t^m(j)\in f_s^n((l,+\infty))} \frac{\abs{W^s_{f_s^{-n}(f_t^{m}(j)),n}}^p}{\abs{W^t_{j,m}}^p}
&\le \frac{\tau_l^p}{(k-\tau_k)^p}\le \frac{\tau_l^p}{(1-\tau_1)^p}.
\end{align*}

We can thus consider $\varepsilon_l=\max\{\frac{\tau_1^p}{(l-\tau_l)^p}, \frac{\tau_l^p}{(1-\tau_1)^p}\}$ in order to get the conditions (2) and (3).

Let $l\ge 1$, $(b_{s,j})_{s\in[N],j\in [l]}$ with $\gamma=\min_{s\in[N],j\in [l]}\abs{b_{s,j}}> 0$, $\Gamma=\max_{s\in[N],j\in [l]}\abs{b_{s,j}}$ and $\varepsilon>0$. There exists $L\ge \max\{l,3+\frac{1}{\varepsilon},\frac{8\Gamma \tau_1}{\varepsilon \gamma}\}$ such that for every $s\in [N]$, every $j\in [L]$, \[|b_{s,j}-a^L_{s,j}|<\min\{\frac{\varepsilon\gamma^2}{32\Gamma},\frac{\gamma}{2}\},\] where $b_{s, j}=0$ for all $s\in [N]$ and $j\ge l+1$. 

In particular, we have $\min_{s\in [N],j\in [l]}|a^L_{s,j}|\in [\frac{\gamma}{2},\frac{3\gamma}{2}]$ and thus $C_L\ge \frac{2L}{3\gamma}$.
Therefore, for every $s\ne t\in[N]$, any $n\in A_L$, any $j\in f^n_t([l])\cap f_s^n([L])$, 
\begin{align*}
&\left|\frac{W^s_{f_s^{-n}(j),n}}{W^t_{f_t^{-n}(j),n}}-\frac{b_{s,f_s^{-n}(j)}}{b_{t,f_t^{-n}(j)}}\right|
\\
&\quad= \left|\frac{W^s_{f_s^{-n}(j),n}x_j}{W^t_{f_t^{-n}(j),n}x_j}-\frac{b_{s,f_s^{-n}(j)}}{b_{t,f_t^{-n}(j)}}\right|\\
&\quad\le  \frac{|W^s_{f_s^{-n}(j),n}x_jb_{t,f_t^{-n}(j)}-W^t_{f_t^{-n}(j),n}x_jb_{s,f_s^{-n}(j)}|}{|W^t_{f_t^{-n}(j),n}x_j|.|b_{t,f_t^{-n}(j)}|}\\
&\quad\le  \frac{|b_{t,f_t^{-n}(j)}|.|W^s_{f_s^{-n}(j),n}x_j-C_La^L_{s,f_s^{-n}(j)}|}{(L-\tau_L)\gamma}+\frac{C_L|b_{t,f_t^{-n}(j)}|.|a^L_{s,f_s^{-n}(j)}-b_{s,f_s^{-n}(j)}|}{(C_L |a^L_{t,f_t^{-n}(j)}|-\tau_L)\gamma}\\
&\quad\quad+
\frac{C_L|b_{s,f_s^{-n}(j)}| |b_{t,f_t^{-n}(j)}-a^L_{t,f_t^{-n}(j)}|}{(C_L |a^L_{t,f_t^{-n}(j)}|-\tau_L)\gamma}
+
\frac{
|b_{s,f_s^{-n}(j)}||C_La^L_{t,f_t^{-n}(j)}-W^t_{f_t^{-n}(j),n}x_j|}{(L-\tau_L)\gamma}\\
&\quad\le \frac{2\Gamma\tau_L}{(L-\tau_L)\gamma}+\frac{2 C_L \Gamma \frac{\varepsilon\gamma^2}{32\Gamma}}{(C_L \frac{\gamma}{2}-\tau_L)\gamma}\\
&\quad\le 
\frac{4\Gamma \tau_1}{L\gamma}+\frac{\varepsilon \gamma}{16(\frac{\gamma}{2}-\frac{\tau_L}{C_L})}
\le \frac{\varepsilon}{2}+\frac{\varepsilon \gamma}{16(\frac{\gamma}{2}-\frac{3\gamma}{2L})}
\le \frac{\varepsilon}{2}+\frac{\varepsilon \gamma}{16(\frac{\gamma}{2}-\frac{3\gamma}{8})}\le \varepsilon.
\end{align*} 
The condition (a) is thus satisfied. Moreover, we deduce that for every $s\neq t\in [N]$, any $n\in A_L$, any $j\in f^n_t([l])\cap f^n_s([l+1, L])$, we have \[\abs{\frac{W^s_{f_s^{-n}(j),n}}{W^t_{f_t^{-n}(j),n}}}\le \varepsilon.\]        
Furthermore, for any $j\in f^{n}_t([l])\cap f_s^{n}((L, \infty))$, we have
\[
\frac{\abs{W^s_{f_s^{-n}(j),n}}}{\abs{W^t_{f_t^{-n}(j),n}}}=\frac{\abs{W^s_{f_s^{-n}(j),n}x_j}}{\abs{W^t_{f_t^{-n}(j),n}x_j}}\le \frac{\tau_L}{L-\tau_L}<\varepsilon
\]
and we conclude that the condition (b) is also satisfied.\\ 

\text{}\\
We now prove the other implication. We thus assume that  there exist sets $B_k\in \A$, $k\ge 1$, and a sequence $(\rho_k)_{k\ge 1}$ of positive real numbers tending to $0$ such that $(1)-(4)$ are satisfied. Since $\mathcal{A}$ is $(f_s)_{s=1}^N$-weakly partition regular and thanks to Lemma~\ref{lemma.polansky2}, we can consider a family of disjoint sets $(A_k)_{k\ge 1}\subset \A$ such that for every $l,k\ge 1$, $A_k\subset B_k$ and
\[
f_s^m([l])\cap f_t^n([k])=\emptyset  \qquad \forall m\in A_l, \forall n\in A_k, m\neq n, \forall s, t\in [N]
\]
and such that for every $k\ge 1$, $\min(A_k)\ge N_k+k+1$ and for every $n\in A_l$, $m\in A_{k}$ with $n\ne m$, $|n-m|\ge N_l+N_k+l+k+2$ where $N_k$ is chosen so that
for any $j\le k$ and any $s\in [N]$, we have  $\sum_{n\ge N_k}\frac{1}{|W^s_{j, n}|^p}<\rho_k$.
The sequence $(A_k)_{k\ge 1}$ then satisfies the conditions $(2)-(4)$. Moreover, in view of the separation property of $(A_k)$, for any $n\in A_k$, any $m\in A_l$ with $n\ne m$, any $j\le l$, the condition $f_t^m(j)\in f_s^n((k,+\infty))$ is equivalent to $f_t^m(j)\in f_s^n([1,+\infty))$.  The conditions (2) and (3) will thus be still satisfied if we consider a subsequence of $(A_k)$.

Let $(y_l)_{l\ge 1}\subset c_{00}^N$ dense in $\ell^p(\mathbb{N})^N$ such that for every $l\ge 1$, every $s\in [N]$, $\deg(y^s_l)\le l$, $\|y^s_l\|^p\le l$ and for every $j\le l$, $y^s_{l,j}\ne 0$, where $y_l=(y^s_l)_{s\in [N]}$. We can then assume, even if it means considering a subsequence, that the sequence $(Nl^2\rho_l)_l$ tends to $0$, that 
for every $s\ne t\in[N]$, any $n\in A_l$, any $j\in f^n_t([l])\cap f_s^n([l])$, 
\[
\left|\frac{W^s_{f_s^{-n}(j),n}}{W^t_{f_t^{-n}(j),n}}-\frac{y^s_{l,f_s^{-n}(j)}}{y^t_{l,f_t^{-n}(j)}}\right|^p\le \frac{1}{l^3}.
\]
and that for any $j\in f^{n}_t([l])\cap f_s^{n}((l, +\infty))$,
\[
\frac{\abs{W^s_{f_s^{-n}(j),n}}^p}{\abs{W^t_{f_t^{-n}(j),n}}^p} \le \frac{1}{l^3}.
\]

We can now show that $T_1,\dots,T_N$ satisfy the Disjoint $\mathcal{A}$-Hypercyclicity Criterion for $(y_l)_{l\ge 1}$, $(A_l)_{l\ge 1}$, $\varepsilon^p_l=\max\{Nl^2\rho_l,\frac{N+1}{l}\}$ and for every $l\ge 1$, every $n\ge 1$, 
\[S_n(y_l)=\sum_{t=1}^{N}\sum_{j\in J_{n,t,l}}\frac{1}{W^t_{j,n}}y^{t}_{l, j}e_{f_t^n(j)} \]
where $J_{n,1,l}=[l]$ and $J_{n,t+1,l}=[l]\backslash f_{t+1}^{-n}(\bigcup_{s\le t}f_{s}^n([l]))$ for every $t\in [N-1]$. 


\begin{itemize}
\item Let $l\ge 1$ and $s\in [N]$. By definition of sets $J_{n,t,l}$ and our assumptions on $A_l$, we get that for every $m\in A_l$, $n\in A_k$, if $m\ne n$ or $t\ne s\in [N]$ then 
 \[f^m_t(J_{m,t,l})\cap f^n_{s}(J_{n,s,k})=\emptyset.\]
 Let $\varepsilon>0$ and $k\ge 1$ such that $Nj^2\rho_j\le \varepsilon$ for every $j\ge k$. We have for every finite set $F\subset [N_k,+\infty)$ and every $l\le k$,
\begin{align*}
\|\sum_{n\in A_l\cap F}S_n y_l\|^p
&=\sum_{n\in A_l\cap F}\sum_{t=1}^{N}\sum_{j\in J_{n,t,l}}\frac{1}{|W^t_{j,n}|^p}|y^{t}_{l,j}|^p\\
&\le Nl^2\max_{t\in [N], j\in [l]}\left(\sum_{n\ge N_k}\frac{1}{|W^t_{j,n}|^p}\right)\le 
Nk^2\rho_k\le \varepsilon.
\end{align*}
and for every $l>k$,
\begin{align*}
\|\sum_{n\in A_l\cap F}S_n y_l\|^p
&=\sum_{n\in A_l\cap F}\sum_{t=1}^{N}\sum_{j\in J_{n,t,l}}\frac{1}{|W^t_{j,n}|^p}|y^{t}_{l,j}|^p\\
&\le Nl^2\max_{t\in [N], j\in [l]}\left(\sum_{n\ge N_l}\frac{1}{|W^t_{j,n}|^p}\right)\le 
Nl^2\rho_l\le \varepsilon.
\end{align*}
We conclude that $\sum_{n\in A_l}S_n y_l$ converges unconditionally in $\ell^p(\mathbb{N})$ uniformly in $l$.

\item Let $k,l\ge 1$ and $s\in [N]$. For any $n\in A_k$, we have
\begin{align*}
\Big\|\sum_{i\in A_l\backslash\{n\}}T_s^nS_iy_l\Big\|^p
&=\Big\|\sum_{i\in A_l\backslash\{n\}}\sum_{t=1}^{N}\sum_{j\in J_{i,t,l}}\frac{1}{W^t_{j,i}}y^{t}_{l,j}T_s^ne_{f_t^i(j)}\Big\|^p\\ 
&\le N (\max_{t\in [N]}\|y^{t}_{l}\|^p)\max_{t\in [N]}\left(\sum_{i\in A_l\backslash\{n\}}\sum_{j\in J_{i,t,l}: f_t^i(j)\in f_s^n([1,+\infty))}\frac{|W^s_{f_s^{-n}(f_t^i(j)),n}|^p}{|W^t_{j,i}|^p}\right)\\
&\le Nl \max_{t\in [N]}\left(\sum_{i\in A_l\backslash\{n\}}\sum_{j\in [l]: f_t^i(j)\in f_s^n((k,+\infty))}\frac{|W^s_{f_s^{-n}(f_t^i(j)),n}|^p}{|W^t_{j,i}|^p}\right)\\
&\le Nl^2\max_{t\in [N],j\in [l]}\left(\sum_{i\in A_l\backslash\{n\}: f_t^i(j)\in f_s^n((k,+\infty))}\frac{|W^s_{f_s^{-n}(f_t^i(j)),n}|^p}{|W^t_{j,i}|^p}\right)\\
&\le Nl^2\rho_l\le \varepsilon^p_l.
\end{align*}
The fact that $\sum_{i\in A_l\backslash\{n\}: f_t^i(j)\in f_s^n((k,+\infty))}\frac{|W^s_{f_s^{-n}(f_t^i(j)),n}|^p}{|W^t_{j,i}|^p}\le \rho_l$ for $t=s$ follows from our assumption that
for any $j\in [l]$ and any $s\in [N]$, we have  $\sum_{n\ge N_l}\frac{1}{|W^s_{j, n}|^p}<\rho_l$. Indeed, if $i\in A_l\backslash\{n\}$, $n\in A_k$ and $f_s^i(j)\in f_s^n((k,+\infty))$ with $j\in [l]$ then $i>n$ and thus $i>n+N_l$ because if $i< n$, we have $j\ge \min f_s^{n-i}((k,+\infty))\ge n-i>l$. It follows that   for every $j\in [l]$,
\begin{align*}
\sum_{i\in A_l\backslash\{n\}: f_s^i(j)\in f_s^n((k,+\infty))}\frac{|W^s_{f_s^{-n}(f_s^i(j)),n}|^p}{|W^s_{j,i}|^p}&\le 
\sum_{i>n+N_l}\frac{|W^s_{f_s^{-n}(f_s^i(j)),n}|^p}{|W^s_{j,i}|^p}\\
&=\sum_{i>n+N_l}\frac{1}{|W^s_{j,i-n}|^p}\le \rho_l.
\end{align*}

\item Let $l\ge 1$ and $s\in [N]$. For any $n\in A_l$, any $k<l$, we have
\begin{align*}
\Big\|\sum_{i\in A_k}T_s^nS_iy_k\Big\|^p&=\Big\|\sum_{i\in A_k}\sum_{t=1}^{N}\sum_{j\in J_{i,t,k}}\frac{1}{W^t_{j,i}}y^{t}_{k,j}T_s^ne_{f_t^i(j)}\Big\|^p \\
&\le N(\max_{t\in [N]}\|y^{t}_{k}\|^p)\max_{t\in [N]}\left(\sum_{i\in A_k}\sum_{j\in [k]:f_t^i(j)\in f_s^n([1,+\infty))}\frac{|W^s_{f_s^{-n}(f_t^i(j)),n}|^p}{|W^t_{j,i}|^p}\right)\\
&\le  Nl^2\max_{t\in [N],j\in [k]}\left(\sum_{i\in A_k:f_t^i(j)\in f_s^n((l,+\infty))}\frac{|W^s_{f_s^{-n}(f_t^i(j)),n}|^p}{|W^t_{j,i}|^p}\right)\\
&\le \varepsilon^p_l.
\end{align*}
As above, the case $s=t$ follows from the fact that for any $k<l$, any $j\in [k]$ and any $s\in [N]$, we have  $\sum_{n\ge N_l}\frac{1}{|W^s_{j, n}|^p}<\rho_l$.

\item Let $l\ge 1$ and $s\in [N]$. For any $n\in A_l$, we get, by definition of $J_{n,t,l}$,
\begin{align*}
\|T_s^nS_ny_l-y^s_{l}\|^p&=
\|\sum_{t=1}^{N}\sum_{j\in J_{n,t,l}}\frac{1}{W^t_{j,n}}y^{t}_{l,j}T_s^ne_{f_t^n(j)}-\sum_{j\in [l]}y_{l,j}^se_j\|^p\\
&=\|\sum_{t=1}^{N}\sum_{j\in J_{n,t,l}: f_t^n(j)\in f_s^n([1,+\infty))}\frac{W^s_{f^{-n}_s(f_t^n(j)),n}}{W^t_{j,n}}y^{t}_{l,j}e_{f^{-n}_s(f_t^n(j))}-\sum_{j\in [l]}y_{l,j}^se_j\|^p\\
&=\|\sum_{t=1}^{s}\sum_{j\in J_{n,t,l}: f_t^n(j)\in f_s^n([l])}\frac{W^s_{f^{-n}_s(f_t^n(j)),n}}{W^t_{j,n}}y^{t}_{l,j}e_{f^{-n}_s(f_t^n(j))}-\sum_{j\in [l]}y_{l,j}^se_j\|^p\\
&\quad+\sum_{t\in [N]\backslash\{s\}}\sum_{j\in J_{n,t,l}: f_t^n(j)\in f_s^n((l,+\infty))}\frac{|W^s_{f^{-n}_s(f_t^n(j)),n}|^p}{|W^t_{j,n}|^p}|y^{t}_{l,j}|^p.
\end{align*}
If $i\in [l]$ then there are unique $t_i\le s$ and $j_i\in J_{n,t_i,l}$ such that
$f_s^{-n}(f^n_{t_i}(j_i))=i$. It follows that
\begin{align*}
&\|T_s^nS_ny_l-y^s_{l}\|^p\\
 &\quad\le \left\|\sum_{i\in [l]}
\Big(\frac{W^s_{i,n}}{W^{t_i}_{j_i,n}}y^{t_i}_{l,j_i}e_{i}-y_{l,i}^se_i\Big)\right\|^p+(\max_{t\in [N]}\|y_l^t\|^p)\sum_{t\in [N]\backslash\{s\}}\sum_{i\in f_s^n((l,+\infty)) \cap f_t^n([l])}\frac{|W^s_{f^{-n}_s(i),n}|^p}{|W^t_{f^{-n}_t(i),n}|^p}\\
&\quad\le l\max_{i\in [l]}
\left|\frac{W^s_{i,n}}{W^{t_i}_{j_i,n}}y^{t_i}_{l,j_i}-y_{l,i}^s\right|^p + l^2N\max_{t\in [N]\backslash\{s\}}\max_{i\in f_s^n((l,+\infty)) \cap f_t^n([l])}\frac{|W^s_{f^{-n}_s(i),n}|^p}{|W^t_{f^{-n}_t(i),n}|^p}\\
&\quad\le l(\max_{t\in[N]}\|y^{t}_{l}\|^p)\max_{i\in [l]} \left|\frac{W^s_{i,n}}{W^{t_i}_{j_i,n}}-\frac{y_{l,i}^s}{y^{t_i}_{l,j_i}}\right|^p+\frac{N}{l}\\
&\quad\le l^2\max_{t\le s}\max_{k\in f^n_t([l])\cap f^n_s([l])} \left|\frac{W^s_{f_s^{-n}(k),n}}{W^{t}_{f_t^{-n}(k),n}}-\frac{y_{l,f_s^{-n}(k)}^s}{y^{t}_{l,f_t^{-n}(k)}}\right|^p+\frac{N}{l}\\
&\quad\le \frac{1}{l}+\frac{N}{l}.
\end{align*}
\end{itemize}
\end{proof}

In the case of $c_0(\mathbb{N})$, we know from \cite{BaRu} that the characterization of $\mathcal{A}$-hypercyclic weighted shifts is more complicated than in the case of $\ell^p(\mathbb{N})$. Similar arrangements will be necessary for the condition (1).


\begin{theorem}[Characterization for pseudo-shifts on $c_0(\mathbb{N})$]\label{caracc0}
Let $T_1=T_{f_1,w_1},\dots,T_N=T_{f_N,w_N}$ be unilateral pseudo-shifts on $c_0(\mathbb{N})$ and $\mathcal{A} = \A_{\infty}$, $\A_{uBd}$, $\A_{ud}$ or $\mathcal{A}_{ld}$. If $\A$ is $(f_s)_{s=1}^N$-weakly partition regular then $T_1,\dots,T_n$ are disjoint $\mathcal{A}$-hypercyclic if and only if
 there exist sets $A_k\in \A$, $k\ge 1$, and a sequence $(\varepsilon_k)$ of positive real numbers tending to $0$ such that
 \begin{enumerate}
\item for any $1\le j\le k$ and any $s\in [N]$, we have  
\[\lim_{n\in A_k} |W^s_{j, n}|=\infty\]
and for any $l, k \geq 1$, any $n\in A_l, m\in A_k$,
any $s\in [N]$ and any $j\in f^m_s([k])\cap f_s^n((l, +\infty))$,
\[
\frac{\abs{W^s_{f_s^{-n}(j),n}}}{\abs{W^s_{f_s^{-m}(j),m}}}\le \min\{\varepsilon_l, \varepsilon_k\},
\]
\item for any $1\le j\le l$, any $k\ge 1$, any $n\in A_k$, any $s\ne t\in[N]$, any $m\in A_l\backslash\{n\}$, if  $f_t^m(j)\in f_s^n((k,+\infty))$ then
\[\frac{\abs{W^s_{f_s^{-n}(f_t^m(j)),n}}}{\abs{W^t_{j,m}}}\le \varepsilon_l,\]
\item for any $1\le j\le k<l$, any $n\in A_l$, any $s\ne t\in [N]$, any $m\in A_k$, if $f_t^m(j)\in f_s^n((l,+\infty))$ then
\[\frac{\abs{W^s_{f_s^{-n}(f_t^{m}(j)),n}}}{\abs{W^t_{j,m}}}\le \varepsilon_l,\]
\item for any $l$, any $(a_{s,j})_{s\in[N],j\in [l]}$ with $a_{s,j}\ne 0$, any $\varepsilon>0$, there exists $L\ge l$ such that for every $s\ne t\in[N]$, any $n\in A_L$, 
\begin{enumerate}
\item for every $j\in f^n_t([l])\cap f_s^n([l])$, 
\[
\left|\frac{W^s_{f_s^{-n}(j),n}}{W^t_{f_t^{-n}(j),n}}-\frac{a_{s,f_s^{-n}(j)}}{a_{t,f_t^{-n}(j)}}\right|\le \varepsilon
\]
\item for every $j\in f^{n}_t([l])\cap f_s^{n}((l, \infty))$
\[
\frac{\abs{W^s_{f_s^{-n}(j),n}}}{\abs{W^t_{f_t^{-n}(j),n}}}\le \varepsilon,
\]
\end{enumerate}
\end{enumerate}
where $W^s_{l, n}=\prod_{\nu=1}^nw_{s, f_s^\nu(l)}$, for all $s\in [N]$.
\end{theorem}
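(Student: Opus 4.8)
The plan is to follow the template of the proof of Theorem~\ref{caraclp} line by line, splitting into the two implications, and to replace every $\ell^p$-summability estimate by a sup-norm estimate adapted to $c_0(\N)$. The structural point is that the role played in the $\ell^p$ case by the summability condition $\sum_n \frac{1}{|W^s_{j,n}|^p}<\infty$ is taken over here by the growth condition $\lim_{n\in A_k}|W^s_{j,n}|=\infty$ together with the diagonal ($s=t$) control encoded in the second half of (1); this is why conditions (2)--(3) now feature individual ratios rather than sums, and why the $s=t$ contribution, previously absorbed by summability, must be displayed separately in (1).

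For the forward implication I would suppose $T_1,\dots,T_N$ admit a disjoint $\A$-hypercyclic vector $x$ and set $A_l=\{n:\|T_s^nx-C_la^l_s\|<\tau_l\ \forall s\in[N]\}$ exactly as in Theorem~\ref{caraclp}, for a dense sequence $(a^l)$ and constants $C_l$ with $C_l|a^l_{s,j}|\ge l$. The growth part of (1) is now \emph{more} elementary than in the $\ell^p$ case: for $1\le j\le k$ and $n\in A_k$ one has the lower bound $|W^s_{j,n}x_{f_s^n(j)}|\ge k-\tau_k$, and since $f_s^n(j)\to\infty$ and $x\in c_0$ force $x_{f_s^n(j)}\to 0$ as $n\to\infty$ in $A_k$, it follows directly that $|W^s_{j,n}|\to\infty$ (so no appeal to Proposition~\ref{propws} and the weighted-shift characterization is needed here). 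The second half of (1) and conditions (2)--(3) are then read off from the same two estimates as in the $\ell^p$ proof, namely the tail bound $|W^s_{f_s^{-n}(j),n}x_j|<\tau_l$ for $f_s^{-n}(j)>l$, $n\in A_l$, and the lower bound $|W^s_{f_s^{-m}(j),m}x_j|\ge k-\tau_k$ for $f_s^{-m}(j)\le k$, $m\in A_k$; dividing and cancelling the common factor $x_j$ gives each ratio $\le\min\{\varepsilon_l,\varepsilon_k\}$ once one sets $\varepsilon_k=\max\{\frac{\tau_k}{1-\tau_1},\frac{\tau_1}{k-\tau_k}\}$, which tends to $0$. Condition (4) is obtained verbatim, since its derivation in Theorem~\ref{caraclp} was purely coordinatewise and never used the $\ell^p$-norm.

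For the converse I would assume (1)--(4) hold for sets $B_k\in\A$ and $(\rho_k)\to 0$, use that $\A$ is $(f_s)_{s=1}^N$-weakly partition regular together with Lemma~\ref{lemma.polansky2} to extract disjoint $A_k\subset B_k$ with the disjointness $f_s^m([l])\cap f_t^n([k])=\emptyset$ and the spacing $\min(A_k)\ge N_k+k+1$, $|n-m|\ge N_l+N_k+l+k+2$, where $N_k$ is chosen large enough that the growth part of (1) is already in force beyond $N_k$. By the separation property, $f_t^m(j)\in f_s^n((k,+\infty))$ becomes equivalent to $f_t^m(j)\in f_s^n([1,+\infty))$, so (2)--(3) persist on this subsequence. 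I would then verify the Disjoint $\A$-Hypercyclicity Criterion (Theorem~\ref{Ahypc}) with the same maps $S_n(y_l)=\sum_{t=1}^N\sum_{j\in J_{n,t,l}}\frac{1}{W^t_{j,n}}y^t_{l,j}e_{f_t^n(j)}$, where $J_{n,1,l}=[l]$ and $J_{n,t+1,l}=[l]\setminus f_{t+1}^{-n}(\bigcup_{s\le t}f_s^n([l]))$, so that the supports $f_t^n(J_{n,t,l})$ consist of pairwise distinct basis vectors both across $t$ and across distinct $n$.

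The decisive difference from the $\ell^p$ case, and the step I expect to be the main obstacle, is condition (1) of the criterion: in $c_0$ one cannot sum $p$-th powers, so I would argue directly that, the supports being pairwise disjoint, $\|\sum_{n\in A_l\cap F}S_ny_l\|=\max_{n\in A_l\cap F}\max_{t,j}\frac{|y^t_{l,j}|}{|W^t_{j,n}|}$, whence unconditional convergence uniform in $l$ reduces to these entries tending to $0$ as $\min F\to\infty$, which is exactly what the growth condition delivers (the uniformity being guaranteed by the choice of $N_k$). The second half of (1) then controls precisely the $s=t$ cross-terms $T_s^nS_iy_l$ that summability had handled before. Conditions (2)--(4) of the criterion are checked by the same term-by-term estimates as in Theorem~\ref{caraclp}, using the sup-norm identity $\|\sum_i T_s^nS_iy_l\|=\max_i|\cdots|$ in place of the $\ell^p$-sum and invoking (2), (3) and (4)(a)--(b) respectively; I expect the only genuine care to lie in the index bookkeeping of (4) when passing from the single-term estimate for $\|T_s^nS_ny_l-y^s_l\|$ to the required bound, exactly as in the final display of Theorem~\ref{caraclp}.
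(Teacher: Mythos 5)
Your proposal is correct and follows essentially the same route as the paper's proof: the same sets $A_l=\{n:\|T_s^nx-C_la^l_s\|<\tau_l\ \forall s\}$ and the direct $c_0$ argument $|W^s_{j,n}|\ge (k-\tau_k)/|x_{f_s^n(j)}|\to\infty$ for the growth part of (1) (the paper likewise does not invoke Proposition~\ref{propws} here), the same choice $\varepsilon_l=\max\{\tau_1/(l-\tau_l),\tau_l/(1-\tau_1)\}$, and in the converse the same extraction via Lemma~\ref{lemma.polansky2}, the same maps $S_n$, and the same sup-norm identity for disjointly supported vectors, with the second half of (1) handling the $s=t$ cross-terms. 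No gaps beyond routine bookkeeping that the paper itself carries out explicitly.
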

\begin{proof}
Suppose that $T_1,\dots,T_N$ possess a disjoint $\mathcal{A}$-hypercyclic vector $x$.
We consider a dense sequence $(a^l)_{l\ge 1}\subset c_0(\mathbb{N})^N$ such that for every $s\in [N]$, 
\[\text{deg}(a^l_s)\le l\quad\text{and}\quad a^{l}_{s,j}\ne 0 \quad\text{for every $j\le l$.}\]
Let $C_l=\frac{l}{\min_{s\in [N],j\in [l]}|a^l_{s,j}|}$. We then let $A_l=\{n:\|T_s^nx-C_l a^l_s\|<\tau_l\ \text{for every $s\in[N]$}\}$ where $(\tau_l)\subset  (0,1)$ decreases to $0$.

It follows that for every $1\le j\le k$, every $s\in [N]$ and every $n\in A_k$,
\[|W^s_{j,n}x_{f_s^n(j)}-C_k a^k_{s,j}|<\tau_k\]
and thus 
\[0<k-\tau_k\le |W^s_{j,n}x_{f_s^n(j)}|.\]
In particular, for every $1\le j\le k$, every $s\in [N]$ and every $n\in A_k$, $x_{f_s^n(j)}\ne 0$ and
\[\lim_{n\in A_k }|W^s_{j,n}|\ge \lim_{n\in A_k }\frac{k-\tau_k}{|x_{f_s^n(j)}|}=\infty.\]
On the other hand, for every $s\in [N]$ and every $n\in A_k$,
\[\max_{j>k}|W^s_{j,n}x_{f_s^n(j)}|<\tau_k.\]

It follows that for any $l,k\ge 1$, any $n\in A_l$, $m\in A_k$, any $s\in [N]$ and any $j\in f_s^m([k])\cap f_s^n((l,+\infty))$,
\[\frac{|W^s_{f_s^{-n}(j),n}|}{|W^s_{f_s^{-m}(j),m}|}=\frac{|W^s_{f_s^{-n}(j),n}x_j|}{|W^s_{f_s^{-m}(j),m}x_j|}\le \frac{\tau_l}{k-\tau_k}.\]

and for any $1\le j\le l$, any $k\ge 1$, any $n\in A_k$, any $s\ne t\in[N]$, any $m\in A_l\backslash\{n\}$ such that $f_t^m(j)\in f_s^n((k,+\infty))$, we get
\[
\frac{\abs{W^s_{f_s^{-n}(f_t^m(j)),n}}}{\abs{W^t_{j,m}}}=
\frac{\abs{W^s_{f_s^{-n}(f_t^m(j)),n}x_{f_{t}^m(j)}}}{\abs{W^t_{j,m}x_{f_t^m(j)}}}
\le \frac{\tau_k}{(l-\tau_l)}
\le \frac{\tau_1}{(l-\tau_l)}.
\]

Moreover, for any $1\le j\le k<l$, any $n\in A_l$ and any $s\ne t\in [N]$,  we have that for all $m\in A_k$ with $f_s^{-n}(f_t^m(j))>l$, the following holds

\[
\frac{\abs{W^s_{f_s^{-n}(f_t^{m}(j)),n}}}{\abs{W^t_{j,m}}}= \frac{\abs{W^s_{f_s^{-n}(f_t^{m}(j)),n} x_{f_t^m(j)}}}{\abs{W^t_{j,m}x_{f_t^m(j)}}}
\le \frac{\tau_l}{(k-\tau_k)}\le \frac{\tau_l}{(1-\tau_1)}.
\]

We can thus consider $\varepsilon_l=\max\{\frac{\tau_1}{(l-\tau_l)}, \frac{\tau_l}{(1-\tau_1)}\}$ in order to get (1), (2), (3) and (4) is obtained as in the case of $\ell^p(\mathbb{N})$.\\
\text{}\\
We now prove the other implication and thus assume that there exist disjoint sets $B_k\in \A$, $k\ge 1$, and a sequence $(\rho_k)$ of positive real numbers tending to $0$ such that $(1)-(4)$ are satisfied. We first consider a family $(A_k)\subset \A$ such that $A_k\subset B_k$,
\[
f_s^m([l])\cap f_t^n([k])=\emptyset  \qquad \forall m\in A_l, \forall n\in A_k, m\neq n, \forall s, t\in [N]
\]
and such that for every $k\ge 1$, $\min(A_k)\ge N_k+k+1$ and for every $n\in A_l$, $m\in A_{k}$ with $n\ne m$, $|n-m|\ge N_l+N_k+l+k+2$ where $N_k$ is chosen so that
for any $j\le l \le k$ and any $s\in [N]$, we have  $\max_{n\in B_l\cap [N_k,+\infty)}1/|W^s_{j, n}|<\rho_k$.

Let $(y_l)_{l\ge 1}\subset c_{00}^N$ dense in $c^N_0(\mathbb{N})$ such that for every $l\ge 1$, every $s\in [N]$, $\deg(y^s_l)\le l$, $\|y^s_l\|\le l$ and for every $j\le l$, $y^s_{l,j}\ne 0$. We can then assume, without loss of generality, that the sequence $(l\rho_l)_l$ tends to $0$, that 
for every $s\ne t\in[N]$, any $n\in A_l$, any $j\in f^n_t([l])\cap f_s^n([l])$, 
\[
\left|\frac{W^s_{f_s^{-n}(j),n}}{W^t_{f_t^{-n}(j),n}}-\frac{y^s_{l,f_s^{-n}(j)}}{y^t_{l,f_t^{-n}(j)}}\right|\le \frac{1}{l^2}
\]
and that for any $j\in f^{n}_t([l])\cap f_s^{n}((l, +\infty))$,
\[
\frac{\abs{W^s_{f_s^{-n}(j),n}}}{\abs{W^t_{f_t^{-n}(j),n}}} \le \frac{1}{l^2}.
\]

We can now show that $T_1,\dots,T_N$ satisfy the Disjoint $\mathcal{A}$-Hypercyclicity Criterion for $(y_l)$, $(A_l)$, $\varepsilon_l=\max\{l\rho_l,\frac{2}{l}\}$ and
\[S_n(y_l)=\sum_{t=1}^{N}\sum_{j\in J_{n,t,l}}\frac{1}{W^t_{j,n}}y^{t}_{l, j}e_{f_t^n(j)} \]
where $J_{n,1,l}=[l]$ and $J_{n,t+1,l}=[l]\backslash f_{t+1}^{-n}(\bigcup_{s\le t}f_{s}^n([l]))$ for every $t\in [N-1]$.\\

Again we note that our assumptions on $(A_k)$ implies that if $m\in A_l$ and $n\in A_k$, then for every $t,s\in [N]$, if $n\ne m$ or $s\ne t$, we have
 \[f^m_t(J_{m,t,l})\cap f^n_{s}(J_{n,s,k})=\emptyset.\]

\begin{itemize}
 \item Let $\varepsilon>0$ and $k\ge 1$ such that $j\rho_j\le \varepsilon$ for every $j\ge k$. We have for every finite set $F\subset [N_k,+\infty)$ and every $l\le k$,
\begin{align*}
\|\sum_{n\in A_l\cap F}S_n y_l\|&=\max_{n\in A_l\cap F}\|S_n y_l\|\\
&=\max_{n\in A_l\cap F,\ t\in[N],\ j\in J_{n,t,l}}\frac{1}{|W^t_{j,n}|}|y^{t}_{l,j}|\\
&\le (\max_t\|y^{t}_{l}\|)\max_{n\in A_l\cap [N_k,+\infty), t\in[N], j\in [l]}\frac{1}{|W^t_{j,n}|}\le l\rho_k\le \varepsilon
\end{align*}
and for every $l>k$,
\begin{align*}
\|\sum_{n\in A_l\cap F}S_n y_l\|
&=\max_{n\in A_l\cap F, t\in [N], j\in J_{n,t,l}}\frac{1}{|W^t_{j,n}|}|y^{t}_{l,j}|\\
&\le l\max_{n\in A_l, t\in [N], j\in [l]}\frac{1}{|W^t_{j,n}|}\le 
l\rho_l\le \varepsilon.
\end{align*}
We conclude that $\sum_{n\in A_l}S_n y_l$ converges unconditionally in $c_0(\mathbb{N})$ uniformly in $l$.
\item Let $k,l\ge 1$ and $s\in [N]$. For any $n\in A_k$, we have
\begin{align*}
\Big\|\sum_{i\in A_l\backslash\{n\}}T_s^nS_iy_l\Big\|
&=\Big\|\sum_{i\in A_l\backslash\{n\}}\sum_{t=1}^{N}\sum_{j\in J_{i,t,l}}\frac{1}{W^t_{j,i}}y^{t}_{l,j}T_s^ne_{f_t^i(j)}\Big\|\\ 
&\le (\max_t\|y^{t}_{l}\|)\max_{t\in [N], i\in A_l\backslash\{n\}, j\in J_{i,t,l}\ \text{with}\ f_t^i(j)\in f_s^n([1,+\infty))}\frac{|W^s_{f_s^{-n}(f_t^i(j)),n}|}{|W^t_{j,i}|}\\
&\le  l\max_{t\in [N], i\in A_l\backslash\{n\}, j\in [l]\ \text{with}\ f_t^i(j)\in f_s^n((k,+\infty))}\frac{|W^s_{f_s^{-n}(f_t^i(j)),n}|}{|W^t_{j,i}|}\\
&\le  l\rho_l \le \varepsilon_l,
\end{align*}
where the case $s=t$ follows from (1).
\item Let $l\ge 1$ and $s\in [N]$. For any $n\in A_l$, any $k<l$, we have
\begin{align*}
\Big\|\sum_{i\in A_k}T_s^nS_iy_k\Big\|&=\Big\|\sum_{i\in A_k}\sum_{t=1}^{N}\sum_{j\in J_{i,t,k}}\frac{1}{W^t_{j,i}}y^{t}_{k,j}T_s^ne_{f_t^i(j)}\Big\| \\
&\le (\max_t\|y^{t}_{l}\|)\max_{t\in [N], i\in A_k, j\in [k]:f_t^i(j)\in f_s^n([1,+\infty))}\frac{|W^s_{f_s^{-n}(f_t^i(j)),n}|}{|W^t_{j,i}|}\\
&\le  l\max_{t\in [N],j\in [k], i\in A_k:f_t^i(j)\in f_s^n((l,+\infty))}\frac{|W^s_{f_s^{-n}(f_t^i(j)),n}|}{|W^t_{j,i}|}\\
&\le l\rho_l\le \varepsilon_l,
\end{align*}
where as previously, the case $s=t$ follows from (1).

\item Let $l\ge 1$ and $s\in [N]$. For any $n\in A_l$, we get, by definition of $J_{n,t,l}$,
\begin{align*}
\|T_s^nS_ny_l-y^s_{l}\|&=
\|\sum_{t=1}^{N}\sum_{j\in J_{n,t,l}}\frac{1}{W^t_{j,n}}y^{t}_{l,j}T_s^ne_{f_t^n(j)}-\sum_{j\in [l]}y_{l,j}^se_j\|\\
&=\|\sum_{t=1}^{N}\sum_{j\in J_{n,t,l}: f_t^n(j)\in f_s^n([1,+\infty))}\frac{W^s_{f^{-n}_s(f_t^n(j)),n}}{W^t_{j,n}}y^{t}_{l,j}e_{f^{-n}_s(f_t^n(j))}-\sum_{j\in [l]}y_{l,j}^se_j\|\\
&=\|\sum_{t=1}^{s}\sum_{j\in J_{n,t,l}: f_t^n(j)\in f_s^n([l])}\frac{W^s_{f^{-n}_s(f_t^n(j)),n}}{W^t_{j,n}}y^{t}_{l,j}e_{f^{-n}_s(f_t^n(j))}-\sum_{j\in [l]}y_{l,j}^se_j\|\\
&\quad+\max_{\substack{t\in [N]\backslash\{s\},\\ j\in J_{n,t,l}: f_t^n(j)\in f_s^n((l,+\infty))}}\frac{|W^s_{f^{-n}_s(f_t^n(j)),n}|}{|W^t_{j,n}|}|y^{t}_{l,j}|.
\end{align*}

If $i\in [l]$ then there are unique $t_i\le s$ and $j_i\in J_{n,t_i,l}$ such that
$f_s^{-n}(f^n_{t_i}(j_i))=i$. It follows that
\begin{align*}
&\|T_s^nS_ny_l-y^s_{l}\|\\
 &\quad\le \left\|\sum_{i\in [l]}
\Big(\frac{W^s_{i,n}}{W^{t_i}_{j_i,n}}y^{t_i}_{l,j_i}e_{i}-y_{l,i}^se_i\Big)\right\|+(\max_{t\in [N]}\|y_l^t\|)\max_{\substack{t\in [N]\backslash\{s\},\\ i\in f_s^n((l,+\infty)) \cap f_t^n([l])}}\frac{|W^s_{f^{-n}_s(i),n}|}{|W^t_{f^{-n}_t(i),n}|}\\
&\quad\le \max_{i\in [l]}
\left|\frac{W^s_{i,n}}{W^{t_i}_{j_i,n}}y^{t_i}_{l,j_i}-y_{l,i}^s\right|^p + l\max_{\substack{t\in [N]\backslash\{s\},\\ i\in f_s^n((l,+\infty)) \cap f_t^n([l])}}\frac{|W^s_{f^{-n}_s(i),n}|}{|W^t_{f^{-n}_t(i),n}|}\\
&\quad\le l\max_{i\in [l]} \left|\frac{W^s_{i,n}}{W^{t_i}_{j_i,n}}-\frac{y_{l,i}^s}{y^{t_i}_{l,j_i}}\right|+\frac{1}{l}\\
&\quad\le l\max_{t\le s, k\in f^n_t([l])\cap f^n_s([l])} \left|\frac{W^s_{f_s^{-n}(k),n}}{W^{t}_{f_t^{-n}(k),n}}-\frac{y_{l,f_s^{-n}(k)}^s}{y^{t}_{l,f_t^{-n}(k)}}\right|+\frac{1}{l}\\
&\quad\le \frac{2}{l}.
\end{align*}
\end{itemize}
\end{proof}

\begin{remark}\label{remreg}
We remark that the condition (1) in the previous theorems gives us a characterization of $\mathcal{A}$-hypercyclic pseudo-shifts. The obtained statement is comparable to the characterization given for weighted shifts by Grosse-Erdmann in \cite{Gr}. It is also important to remark that the assumption of weakly partition regularity is only used in the proof of the second implication.
\end{remark}

The proofs of the previous theorems show that pseudo-shifts are disjoint $\mathcal{A}$-hypercyclic if and only if they satisfy the Disjoint $\mathcal{A}$-Hypercyclicity Criterion.

\begin{corollary}
Let $T_{f_1,w_1},\dots,T_{f_N,w_N}$ be unilateral pseudo-shifts on $c_0(\mathbb{N})$ or $\ell^p(\mathbb{N})$ with $1\le p<\infty$ and $\mathcal{A} = \A_{uBd}$, $\A_{ud}$ or $\mathcal{A}_{ld}$. If $\A$ is $(f_s)_{s=1}^N$-weakly partition regular then the operators $T_{f_1,w_1},\dots,T_{f_N,w_N}$ are disjoint $\mathcal{A}$-hypercyclic if and only if they satisfy the Disjoint $\mathcal{A}$-Hypercyclicity Criterion.
\end{corollary}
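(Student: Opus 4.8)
The plan is to observe that the corollary is a formal consequence of the Disjoint $\A$-Hypercyclicity Criterion (Theorem~\ref{Ahypc}) together with the two characterization theorems, once one notices that the latter are in fact proved \emph{through} the criterion. One implication is immediate: if $T_{f_1,w_1},\dots,T_{f_N,w_N}$ satisfy the Disjoint $\A$-Hypercyclicity Criterion, then they are disjoint $\A$-hypercyclic by Theorem~\ref{Ahypc}. So the whole content lies in the reverse implication, and here I would simply trace the logical structure of the proofs of Theorems~\ref{caraclp} and~\ref{caracc0} rather than produce any new estimate.

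For the reverse implication, suppose the operators are disjoint $\A$-hypercyclic, and fix the relevant space ($\ell^p(\N)$ or $c_0(\N)$) together with the corresponding characterization theorem (Theorem~\ref{caraclp} or Theorem~\ref{caracc0}). The first step is to apply the forward implication of that theorem: from a disjoint $\A$-hypercyclic vector one extracts sets $B_k\in\A$ and a sequence $(\rho_k)\to 0$ for which conditions $(1)$--$(4)$ hold. The second, and key, step is to recall what the reverse implication of the same theorem actually does with conditions $(1)$--$(4)$. Using the hypothesis that $\A$ is $(f_s)_{s=1}^N$-weakly partition regular together with Lemma~\ref{lemma.polansky2}, it constructs a family of disjoint sets $(A_l)\subset\A$ with $A_l\subset B_l$ and the required separation, a dense sequence $(y_l)\subset c_{00}^N$, positive reals $\varepsilon_l\to 0$, and the maps $S_n$, and then verifies \emph{verbatim} the four hypotheses of Theorem~\ref{Ahypc} (unconditional convergence uniformly in $l$, and the three norm estimates). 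In other words, that argument never uses disjoint $\A$-hypercyclicity directly; it establishes the Disjoint $\A$-Hypercyclicity Criterion and only afterwards invokes Theorem~\ref{Ahypc}. Hence conditions $(1)$--$(4)$ already force the criterion, and the operators satisfy it.

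Combining the two steps yields the chain: disjoint $\A$-hypercyclic $\Rightarrow$ conditions $(1)$--$(4)$ $\Rightarrow$ Disjoint $\A$-Hypercyclicity Criterion, which together with the trivial direction closes the equivalence. I do not expect a genuine obstacle, since no new inequality is needed; the only point demanding care is bookkeeping. One must make sure that the restriction to $\A=\A_{uBd},\A_{ud},\A_{ld}$ is precisely what permits a single uniform statement valid on both $\ell^p(\N)$ and $c_0(\N)$ (Theorem~\ref{caraclp} is stated only for these three families, whereas Theorem~\ref{caracc0} also covers $\A_\infty$), and that the weak partition regularity assumption---used only in the reverse implications, as noted in Remark~\ref{remreg}---is available to run the construction of $(A_l)$ through Lemma~\ref{lemma.polansky2}.
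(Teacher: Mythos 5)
Your proposal is correct and takes essentially the same approach as the paper: the corollary is obtained exactly by combining Theorem~\ref{Ahypc} for one direction with the observation that the reverse implications of Theorems~\ref{caraclp} and~\ref{caracc0} are proved by verifying the hypotheses of the Criterion, so that disjoint $\mathcal{A}$-hypercyclicity $\Rightarrow$ conditions (1)--(4) $\Rightarrow$ Criterion. The bookkeeping points you flag (restriction to $\mathcal{A}_{uBd}$, $\mathcal{A}_{ud}$, $\mathcal{A}_{ld}$ so that a single statement covers both spaces, and the fact that weak partition regularity enters only through Lemma~\ref{lemma.polansky2} in the reverse direction, as noted in Remark~\ref{remreg}) are precisely the relevant ones.
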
 

Thanks to Theorem \ref{caraclp} and Theorem \ref{caracc0}, we also get a characterization of disjoint $\mathcal{A}$-hypercyclicity for any family of weighted shifts. Indeed, for weighted shifts, we have $f^m_s([l])=[m+1, m+l]$ for any $s\in [N]$, then it follows from Lemma~\ref{lemma.polansky2} that $\A_{\infty}$, $\A_{uBd}$, $\A_{ud}$ and $\mathcal{A}_{ld}$ are $(f_s)_{s=1}^N$-weakly partition regular. We state this characterization in the case of $\ell^p(\N)$.


\begin{theorem}[Characterization for weighted shifts on $\ell^p(\N)$]\label{shiftlp}
Let $T_1=B_{w_1},\dots,T_N=B_{w_N}$ be unilateral weighted shifts on $\ell^p(\mathbb{N})$ with $1\le p<\infty$ and $\mathcal{A} = \A_{uBd}$, $\A_{ud}$ or $\mathcal{A}_{ld}$. The family $T_1,\dots,T_n$ are disjoint $\mathcal{A}$-hypercyclic if and only if
 there exist disjoint sets $A_k\in \A$, $k\ge 1$, and a sequence $(\varepsilon_k)$ of positive real numbers tending to $0$ such that
 \begin{enumerate}
\item for any $s\in [N]$, we have  $\sum_{n\ge 1}\frac{1}{|W^s_{1, n}|^p}<\infty$,


\item for any $1\le j\le l$, any $k\ge 1$, any $n\in A_k$, any $s\ne t\in[N]$,
\[\sum_{m\in A_l: m>n} \frac{\abs{W^s_{j+m-n,n}}^p}{\abs{W^t_{j,m}}^p}\le \varepsilon_l,\]
\item for any $1\le j\le k<l$, any $n\in A_l$, any $s\ne t\in [N]$,
\[\sum_{m\in A_k:m> n} \frac{\abs{W^s_{j+m-n,n}}^p}{\abs{W^t_{j,m}}^p}\le \varepsilon_l,\]
\item for any $l$, any $(a_{s,j})_{s\in[N],j\in [l]}$ with $a_{s,j}\ne 0$, any $\varepsilon>0$, there exists $L\ge l$ such that for every $s\ne t\in[N]$, every $n\in A_L$, every $j\in[l]$, 
\[
\left|\frac{W^s_{j,n}}{W^t_{j,n}}-\frac{a_{s,j}}{a_{t,j}}\right|<\varepsilon
\]
\end{enumerate}
where $W^s_{l, n}=\prod_{\nu=1}^nw_{s, l+\nu}$, for all $s\in [N]$.
\end{theorem}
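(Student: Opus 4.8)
The plan is to obtain this statement as a specialization of Theorem~\ref{caraclp}. A weighted shift $B_{w_s}$ is precisely the pseudo-shift $T_{f_s,w_s}$ whose inducing map is $f_s(n)=n+1$, and, as noted just above, for these maps $f_s^m([l])=\{m+1,\dots,m+l\}$, so Lemma~\ref{lemma.polansky2} shows that $\A_{uBd}$, $\A_{ud}$ and $\A_{ld}$ are $(f_s)_{s=1}^N$-weakly partition regular. Thus Theorem~\ref{caraclp} applies, and the whole task reduces to rewriting its conditions (1)--(4) for the maps $f_s(n)=n+1$ and reconciling the slightly different summation ranges.

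First I would dispose of the routine translations. With $f_s(n)=n+1$ one has $f_s^m(j)=j+m$, $f_s^{-n}(j)=j-n$, and $W^s_{l,n}=\prod_{\nu=1}^{n}w_{s,l+\nu}$. Since all the maps $f_s$ coincide, $f_t^n([l])=f_s^n([l])=\{n+1,\dots,n+l\}$, so $f_t^n([l])\cap f_s^n((l,\infty))=\emptyset$; hence clause (b) of condition (4) in Theorem~\ref{caraclp} is vacuous, while clause (a), after the substitution $i=j-n\in[l]$ and $f_s^{-n}(j)=f_t^{-n}(j)=i$, becomes exactly condition (4) of the present statement. For condition (1), the identity $W^s_{j,n}=W^s_{1,n+j-1}/W^s_{1,j-1}$ (valid because the weights of a hypercyclic shift are nonzero) shows that $\sum_n 1/|W^s_{j,n}|^p$ is a fixed multiple of a tail of $\sum_n 1/|W^s_{1,n}|^p$; hence the single hypothesis $\sum_n 1/|W^s_{1,n}|^p<\infty$ is equivalent to condition (1) of Theorem~\ref{caraclp} for every $j$. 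Note moreover that condition (1) is a statement about the weights alone, independent of the sets.

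The substantive step is the reconciliation of conditions (2) and (3). Under $f_s(n)=n+1$ the constraint $f_t^m(j)\in f_s^n((k,+\infty))$ reads $j+m-n>k$ and $f_s^{-n}(f_t^m(j))=j+m-n$, so Theorem~\ref{caraclp}(2) sums over $\{m\in A_l\setminus\{n\}:\ j+m-n>k\}$ whereas the present condition (2) sums over $\{m\in A_l:\ m>n\}$; these index sets need not coincide for arbitrary disjoint sets, since for $k<j$ the former may contain elements $m<n$. The remedy, in both implications, is to feed the sets into Lemma~\ref{lemma.polansky2} (with $Q=1$) to pass to disjoint subsets $A_k'\subseteq A_k$ in $\A$ with $\min(A_k')\ge N_k$ and $|n-m|\ge N_k+N_l$ whenever $n\in A_k'$, $m\in A_l'$, $n\ne m$, choosing $N_k\ge k$ and $N_k\ge k$ large. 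For such separated families and $j\le l$ one has $j+m-n>k\iff m>n$ (if $m<n$ then $n-m\ge N_k+N_l>l\ge j$ forces $j+m-n<1\le k$; if $m>n$ then $m-n\ge N_k\ge k$ gives $j+m-n>k$), so the two ranges agree on $A_k'$; since passing to a subset only shrinks the nonnegative sums, conditions (2) and (3) transfer with the same bounds $\varepsilon_l$, while condition (4), whose requirement only weakens under passage to a smaller set of indices $n$, transfers immediately. Running the argument forward, one extracts $A_k'$ from the sets produced by the forward implication of Theorem~\ref{caraclp} and reads off the present conditions; running it backward, one extracts $A_k'$ from disjoint sets satisfying the present conditions, obtains conditions (1)--(4) of Theorem~\ref{caraclp} for $A_k'$, and concludes disjoint $\A$-hypercyclicity from that theorem.

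The main obstacle is precisely this range reconciliation: one must check that the separation delivered by Lemma~\ref{lemma.polansky2} is strong enough, relative to both the index $k$ and the degree $l$ of the approximating data, to collapse $\{j+m-n>k\}$ onto $\{m>n\}$ for all relevant $j\le l$, and that the extracted subsets remain in $\A$ --- both of which are exactly what Lemma~\ref{lemma.polansky2} guarantees. Everything else is the elementary rewriting recorded above.
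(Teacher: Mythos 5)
Your proposal is correct and takes essentially the same route as the paper: its proof of Theorem~\ref{shiftlp} likewise consists of invoking Lemma~\ref{lemma.polansky2} to assume the separation $|n-m|\ge l+k+2$ between distinct elements of $A_l$ and $A_k$ (exactly your collapse of the index set $\{m:j+m-n>k\}$ onto $\{m:m>n\}$) and then specializing Theorem~\ref{caraclp} to $f_s(n)=n+1$. The details you spell out --- the tail identity $W^s_{j,n}=W^s_{1,n+j-1}/W^s_{1,j-1}$ for condition (1), the vacuity of condition (4)(b), and the transfer of all conditions to subsets --- are the routine translations the paper leaves implicit.
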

\begin{proof}
It suffices to remark that by Lemma~\ref{lemma.polansky2}, we can always assume that for every $n\in A_l$, $m\in A_{k}$ with $n\ne m$, $|n-m|\ge l+k+2$. The characterization for weighted shifts then follows from Theorem~\ref{caraclp}
\end{proof}

We can also apply Theorem~\ref{caraclp} and Theorem~\ref{caracc0} to powers of weighted shifts. Indeed, in this case, we have $f^m_s([l])=[q_s m+1, q_s m+l]$ for any $s\in [N]$ and some $q_s\in \mathbb{N}$, and it follows from Lemma~\ref{lemma.polansky2} that $\A_{\infty}$, $\A_{uBd}$, $\A_{ud}$ and $\mathcal{A}_{ld}$ are $(f_s)_{s=1}^N$-weakly partition regular. Here is an example of applications.


\begin{corollary}
Let $T_1=\lambda_1 B^{i_1}$,...,$T_N=\lambda_N B^{i_N}$ with $1<|\lambda_1|\le \cdots\le |\lambda_N|$ and $i_1,\dots, i_N\ge 1$. Let $X=\ell^p(\mathbb{N})$ with $1\le p<\infty$ or $X=c_0(\mathbb{N})$. Then the following assertions are equivalent:
\begin{enumerate}
\item $T_1$,...,$T_N$ are disjoint frequently hypercyclic on $X$;
\item $T_1$,...,$T_N$ are disjoint upper frequently hypercyclic on $X$;
\item $T_1$,...,$T_N$ are disjoint reiteratively hypercyclic on $X$;
\item $T_1$,...,$T_N$ are disjoint hypercyclic on $X$;
\item for every $1\le s<t\le N$, $i_s< i_t$ and $|\lambda_s|<|\lambda_t|$.
\end{enumerate}
\end{corollary}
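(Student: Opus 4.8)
The plan is to regard each $T_s=\lambda_s B^{i_s}$ as the pseudo-shift $T_{f_s,w_s}$ with $f_s(n)=n+i_s$ and constant weight $\lambda_s$, so that $f_s^m([l])=[mi_s+1,mi_s+l]$ and $|W^s_{l,n}|=|\lambda_s|^n$ for every $l,n$. Because the sets $f_s^m([l])$ are intervals of fixed length, Lemma~\ref{lemma.polansky2} guarantees that $\A_\infty$, $\A_{uBd}$, $\A_{ud}$ and $\A_{ld}$ are all $(f_s)_{s=1}^N$-weakly partition regular, so that Theorems~\ref{caraclp} and~\ref{caracc0} apply. The implications $(1)\Rightarrow(2)\Rightarrow(3)\Rightarrow(4)$ are then immediate: a set of positive lower density has positive upper density, a set of positive upper density has positive upper Banach density, and a set of positive upper Banach density is infinite, so a disjoint frequently hypercyclic vector is simultaneously a disjoint $\A$-hypercyclic vector for each of the weaker families $\A$.

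For $(4)\Rightarrow(5)$ I would substitute $|W^s_{l,n}|=|\lambda_s|^n$ into condition~(4) of the disjoint hypercyclicity characterization, that is Theorem~\ref{caracc0} with $\A=\A_\infty$ on $c_0(\N)$ together with the corresponding result of \cite{CoMaSa} on $\ell^p(\N)$. If $i_s=i_t$ for some $s\neq t$, then $f^n_s([l])=f^n_t([l])$, and part~(a) of that condition would force $(\lambda_s/\lambda_t)^n$, a quantity of fixed modulus $(|\lambda_s|/|\lambda_t|)^n$, to approximate an arbitrary ratio $a_{s,\cdot}/a_{t,\cdot}$; this is impossible, so the powers $i_1,\dots,i_N$ are pairwise distinct. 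For a pair with $i_s<i_t$ the intersection $f^n_t([l])\cap f^n_s((l,\infty))$ is non-empty and part~(b) forces $(|\lambda_s|/|\lambda_t|)^n\to0$, hence $|\lambda_s|<|\lambda_t|$; an inequality $i_s>i_t$ with $s<t$ is ruled out by applying part~(b) to the pair $(t,s)$ and invoking the standing hypothesis $|\lambda_t|\ge|\lambda_s|$. Together these give exactly~(5).

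The heart of the statement is $(5)\Rightarrow(1)$, which I would establish by checking conditions~(1)--(4) of Theorem~\ref{caraclp} on $\ell^p(\N)$ (resp. Theorem~\ref{caracc0} on $c_0(\N)$) for $\A=\A_{ld}$. Condition~(1) holds because $|\lambda_s|>1$. For condition~(4), as soon as the extracted sets satisfy $\min A_L\ge l$, distinctness of the powers gives $f^n_s([l])\cap f^n_t([l])=\emptyset$ for $n\in A_L$, so part~(a) is vacuous, while part~(b) follows from $(|\lambda_s|/|\lambda_t|)^n\to0$ whenever $i_s<i_t$. I expect conditions~(2) and~(3) to be the main obstacle: here one must bound, uniformly in $n$, the cross-interference sums $\sum_m |\lambda_s|^{np}/|\lambda_t|^{mp}$ taken over those $m$ with $f^m_t(j)\in f^n_s((k,\infty))$. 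I would apply Lemma~\ref{lemma.polansky2} with $Q=i_N$ to extract disjoint sets $A_k\subseteq B_k$ in $\A_{ld}$ with $\min A_k\ge N_k$ and the multiplicative separation $|i_sn-i_tm|\ge N_k+N_l$ for $n\in A_k$, $m\in A_l$, $n\neq m$. This separation confines every contributing $m$ to $i_tm\ge i_sn+N_k+N_l$, so that each geometric sum is dominated, up to a constant, by its smallest term $(|\lambda_s|/|\lambda_t|^{i_s/i_t})^{n}\,|\lambda_t|^{-(N_k+N_l)/i_t}$. The decisive point, and the place where~(5) must genuinely be used, is that the factor $(|\lambda_s|/|\lambda_t|^{i_s/i_t})^{n}$ not grow with $n$; once this is secured, taking $N_k+N_l$ large makes the sums at most $\varepsilon_l$. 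With conditions~(1)--(4) verified, the Disjoint $\A$-Hypercyclicity Criterion (Theorem~\ref{Ahypc}) produces a common disjoint frequently hypercyclic vector, closing the cycle of equivalences.
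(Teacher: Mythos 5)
Your reduction to the pseudo-shift characterizations, the chain $(1)\Rightarrow(2)\Rightarrow(3)\Rightarrow(4)$, the treatment of $(4)\Rightarrow(5)$ (the paper simply cites \cite{BePe07}, but your derivation from the necessity half of the characterizations is legitimate), and the verification of conditions (1) and (4) of Theorem~\ref{caraclp} all match the paper. The gap is exactly at the point you yourself call decisive in $(5)\Rightarrow(1)$: you assert that hypothesis (5) guarantees that the factor $(|\lambda_s|/|\lambda_t|^{i_s/i_t})^{n}$ does not grow with $n$, and this is false. Condition (5) gives $i_s<i_t$ and $|\lambda_s|<|\lambda_t|$, but not $|\lambda_s|^{i_t}\le|\lambda_t|^{i_s}$: for instance $\lambda_1=4$, $i_1=1$, $\lambda_2=5$, $i_2=2$ satisfies (5), yet $|\lambda_1|/|\lambda_2|^{1/2}=4/\sqrt{5}>1$, so your dominating term $(|\lambda_s|/|\lambda_t|^{i_s/i_t})^{n}\,|\lambda_t|^{-(N_k+N_l)/i_t}$ blows up exponentially as $n$ runs through the infinite set $A_k$, while $N_k+N_l$ stays fixed. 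Since conditions (2) and (3) of Theorem~\ref{caraclp} must hold for \emph{every} $n\in A_k$, the additive/multiple separation supplied by Lemma~\ref{lemma.polansky2} alone cannot close this; the cross-sums genuinely diverge for sets chosen only in that way.

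What is missing is the device to which the paper devotes most of its proof: all the sets $A_k$ are extracted inside one fixed lacunary set
\[
B=\N\cap\bigcup_{j\ge1}\bigl(M^jr^j,\,M^jr^{j+1}-j\bigr)\in\A_{ld},
\]
where $\gamma=\max_{s<N}\log|\lambda_s|/\log|\lambda_{s+1}|<1$, $\Gamma=\log|\lambda_N|/\log|\lambda_1|$, $1<r<\min\{1/\gamma,\min_{s<N}i_{s+1}/i_s\}$ and $M>\max\{2i_N/i_1,\Gamma\}$. The role of $B$ is multiplicative rather than additive: if $m,n\in B$ and $m\ge\delta n-l$ (the constraint produced by $j+i_tm>k+i_sn$ with $s<t$, $\delta=i_1/i_N$), then $m$ must lie in the same interval of $B$ as $n$ or in a later one, whence $m\ge\gamma(n+l)$; within an interval any two elements have ratio larger than $1/r>\gamma$, and crossing to the next interval costs a factor $M$. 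In other words, the dangerous window of ratios $m/n$ between roughly $i_s/i_t$ and $\gamma$ contains no pair of elements of $B$, so it never has to be estimated. One then gets $|\lambda_s|^{n}/|\lambda_t|^{m}\le|\lambda_t|^{-(m-\gamma n)}\le|\lambda_t|^{-\gamma l}$ and the sums in (2) and (3) are dominated by a geometric series tending to $0$ as $l\to\infty$ (the case $s>t$ is handled symmetrically with $\Gamma$ and the gaps of factor $M$). Your outline becomes correct once this avoidance construction is inserted: take the $A_k$ inside $B$ first, and only then apply the separation of Lemma~\ref{lemma.polansky2} as you propose.
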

\begin{proof}
It is obvious that $(1)\Rightarrow (2)\Rightarrow (3)\Rightarrow (4)$. Moreover, $(4) \Rightarrow (5)$ is a consequence of \cite{BePe07}. It remains to show that $(5)\Rightarrow (1)$. We only write the proof for $\ell^p(\mathbb{N})$, as the proof for $c_0(\mathbb{N})$ is similar. In view of Theorem~\ref{caraclp}, we need to find a sequence $(A_k)_{k\ge 1}\subset \A_{ld}$ and a sequence $(\varepsilon_k)_{k\ge 1}$ of positive real numbers tending to $0$ such that
 \begin{enumerate}
\item for any $1\le j\le l$, any $k\ge 1$, any $n\in A_k$, any $s\ne t\in[N]$
\[\sum_{m\in A_l\backslash\{n\}: j+i_tm> k+i_sn} \frac{\abs{\lambda_s}^{np}}{\abs{\lambda_t}^{mp}}\le \varepsilon_l,\]
\item for any $1\le j\le k<l$, any $n\in A_l$, any $s\ne t\in [N]$
\[\sum_{m\in A_k:j+i_tm> l+i_s n} \frac{\abs{\lambda_s}^{np}}{\abs{\lambda_t}^{mp}}\le \varepsilon_l,\]
\item for any $l$, any $(a_{s,j})_{s\in[N],j\in [l]}$ with $a_{s,j}\ne 0$, any $\varepsilon>0$, there exists $L\ge l$ such that for any $s\ne t\in[N]$, any $n\in A_L$, 
\begin{enumerate}
\item for any $j\in [1+i_tn,l+i_tn]\cap [1+i_sn,l+i_sn]$, 
\[
\left|\frac{\lambda_s^n}{\lambda_t^n}-\frac{a_{s,f_s^{-n}(j)}}{a_{t,f_t^{-n}(j)}}\right|<\varepsilon
\]
\item if  $[1+i_tn,l+i_tn]\cap (l+i_sn, +\infty)\ne \emptyset$ then
\[
\frac{\abs{\lambda_s}^n}{\abs{\lambda_t}^n}<\varepsilon.
\]
\end{enumerate}
\end{enumerate}

Let $\delta=\frac{i_1}{i_N}$, $\gamma=\max_{s<N}\frac{\log(|\lambda_s|)}{\log(|\lambda_{s+1}|)}$, $\Gamma=\frac{\log(|\lambda_{N}|)}{\log(|\lambda_{1}|)}$ so that for every $1\le s<t\le N$ and every $n\ge 0$, $|\lambda_s|^{n}\le |\lambda_t|^{\gamma n}$ and $|\lambda_t|^n\le |\lambda_s|^{\Gamma n}$. We consider  $1<r<\min\{\frac{1}{\gamma},\min_{s<N}\frac{i_{s+1}}{i_s}\}$,  $M>\max\{\frac{2}{\delta},\Gamma\}$
and $B=\mathbb{N}\cap \bigcup_{j\ge 1}(M^j r^j, M^j r^{j+1}-j)\in \mathcal{A}_{ld}$. Finally, we select, thanks to Lemma~\ref{lemma.polansky2}, a family $(A_k)_{k\ge 1}$ of sets in  $\mathcal{A}_{ld}$ such that for every $k,l$, $A_k\subset B$, $\min(A_k)\ge M^kr^{k}+Mk$ and for every $n\in A_k$, $m\in A_{l}$ with $n\ne m$, $|n-m|\ge k+l+2$.

 \begin{enumerate}
\item We first remark that if $m\in A_l$ and $m\ge \delta n-l$ with $n\in B$ then $m\ge \gamma (n+l)$.
Indeed, if $n\in (M^{l'} r^{l'}, M^{l'} r^{l'+1}-l')$ with $l'< l$ then since $m\ge \min(A_l)\ge M^lr^{l}+l$, we have $m\ge M^{l'}r^{l'+1}+l\ge n+l\ge \gamma(n+l)$ and if $l'\ge l$ then 
\[2m\ge m+l \ge \delta n> \delta M^{l'}r^{l'}\]
and thus
\[ m\ge \frac{\delta M}{2} M^{l'-1}r^{l'}\ge M^{l'-1}r^{l'} \]
Finally, since $m\in B$, we get $m\ge M^{l'} r^{l'}$ and thus $m\ge \frac{1}{r}(n+l')\ge \gamma (n+l')\ge \gamma(n+l)$. \\

On the other hand, if $m\in A_l$ and $m> rn-l$ with $n\in B$ then $m\ge  \Gamma(n+l)$. Indeed, 
if $n\in (M^{l'} r^{l'}, M^{l'} r^{l'+1}-l')$ with $l'< l$ then
$m\ge M^lr^l+Ml\ge M(n+l)\ge \Gamma(n+l)$
and if $l'\ge l$ then 
$m> rn-l\ge M^{l'}r^{l'+1}-l'$ and since $m\in B$, we get $m\ge M^{l'+1}r^{l'+1}\ge M(n+l)\ge  \Gamma(n+l)$.\\

Let $1\le j\le l$, $k\ge 1$, $n\in A_k$ and $s\ne t\in[N]$, if $s<t$, we then get
\begin{align*}
\sum_{m\in A_l\backslash\{n\}: j+i_tm>k+i_sn} \frac{\abs{\lambda_s}^{np}}{\abs{\lambda_t}^{mp}}&\le 
\sum_{m\in A_l: m> \frac{i_sn-l}{i_t}} \frac{\abs{\lambda_s}^{np}}{\abs{\lambda_t}^{mp}}\\
&\le 
\sum_{m\in A_l: m> \delta n-l} \frac{\abs{\lambda_s}^{np}}{\abs{\lambda_t}^{mp}}\\
&\le \sum_{m\ge \gamma (n+l)}\frac{\abs{\lambda_s}^{np}}{\abs{\lambda_t}^{mp}}\\
&\le \sum_{m\ge \gamma (n+l)}\frac{1}{\abs{\lambda_t}^{(m-\gamma n)p}}\\
&\le \sum_{m\ge 0}\frac{1}{\abs{\lambda_1}^{(m+\gamma l)p}}.
\end{align*}
and if $s>t$, we get
\begin{align*}
\sum_{m\in A_l\backslash\{n\}: j+i_tm>k+i_sn} \frac{\abs{\lambda_s}^{np}}{\abs{\lambda_t}^{mp}}&\le 
\sum_{m\in A_l: m> \frac{i_s}{i_t}n-l} \frac{\abs{\lambda_s}^{np}}{\abs{\lambda_t}^{mp}}\\
&\le 
\sum_{m\in A_l: m> rn-l} \frac{\abs{\lambda_s}^{np}}{\abs{\lambda_t}^{mp}}\\
&\le 
\sum_{m\ge \Gamma(n+l)} \frac{\abs{\lambda_s}^{np}}{\abs{\lambda_t}^{mp}}\\
&\le 
\sum_{m\ge \Gamma(n+l)} \frac{1}{\abs{\lambda_t}^{(m-\Gamma n)p}}\\
&\le \sum_{m\ge 0} \frac{1}{\abs{\lambda_1}^{(m+\Gamma l)p}}
\end{align*}
\item Let $1\le j\le k<l$, $n\in A_l$ and $s\ne t\in [N]$.
If $s<t$, it suffices to remark that if $m>\delta n-l$ and $m\in B$ then since $n\in A_l$, we have $m>\delta Ml -l\ge l$ and $n\in (M^{l'}r^{l'},M^{l'}r^{l'+1}-l')$ for some $l'\ge l$. It follows as previously that $m\ge \gamma(n+l)$ and thus
\begin{align*}
\sum_{m\in A_k:j+i_tm>l+i_s n} \frac{\abs{\lambda_s}^{np}}{\abs{\lambda_t}^{mp}}&\le 
\sum_{m\ge \gamma(n+l)} \frac{\abs{\lambda_s}^{np}}{\abs{\lambda_t}^{mp}}\le \sum_{m\ge 0} \frac{1}{\abs{\lambda_1}^{(m+\gamma l)p}}.
\end{align*}
On the other hand, if $s>t$ then  if $m\in A_k$ and $m> rn-l$ , we have $m\ge  \Gamma(n+l)$. Indeed, 
since $n\in A_l$, $n\in (M^{l'} r^{l'}, M^{l'} r^{l'+1}-l')$ with $l'\ge l$ and thus
$m>rn-l\ge M^{l'}r^{l'+1}-l'$ and since $m\in B$, we get $m\ge M^{l'+1}r^{l'+1}\ge M(n+l)\ge  \Gamma(n+l)$. It follows that
\begin{align*}
\sum_{m\in A_k:j+i_tm>l+i_s n} \frac{\abs{\lambda_s}^{np}}{\abs{\lambda_t}^{mp}}&\le 
\sum_{m\ge \Gamma(n+l)} \frac{\abs{\lambda_s}^{np}}{\abs{\lambda_t}^{mp}}\le \sum_{m\ge 0} \frac{1}{\abs{\lambda_1}^{(m+\Gamma l)p}}.
\end{align*}
We can thus consider $\varepsilon_l=\sum_{m\ge 0} \frac{1}{\abs{\lambda_1}^{(m+\gamma l)p}}$.
\item Let $l\ge 1$, $(a_{s,j})_{s\in[N],j\in [l]}$ with $a_{s,j}\ne 0$ and $\varepsilon>0$. If we consider $L\ge l$ such that $\frac{\abs{\lambda_s}^L}{\abs{\lambda_{s+1}}^L}<\varepsilon$ for every $s<N$, we get that for every $s\ne t\in[N]$, any $n\in A_L$, 
\begin{enumerate}
\item the sets $[1+i_tn,l+i_tn]$ and $[1+i_sn,l+i_sn]$ are disjoint because if $t>s$, we have
\[1+i_tn\ge 1+i_sn+n > l+i_sn\]
since $\min A_L \ge l$.

\item if $s>t$ then the sets $[1+i_tn,l+i_tn]$ and $(l+i_sn, \infty)$ are disjoint and if $s<t$, we have
\[
\frac{\abs{\lambda_s}^n}{\abs{\lambda_t}^n}\le \frac{\abs{\lambda_s}^L}{\abs{\lambda_t}^L}<\varepsilon.
\]
\end{enumerate}
\end{enumerate}
\end{proof}

\section{Disjoint reiterative and upper frequent hypercyclicity cases}

In \cite{BoGr}, Bonilla and Grosse-Erdmann gave a Birkhoff-type characterization for upper frequent and reiterative hypercyclicity. They also used this result to simplify the characterizations of upper frequently and reiteratively hypercyclic weighted shifts that appeared in the literature. In view of these results, we can also simplify our characterization of disjoint $\mathcal{A}$-hypercyclic pseudo-shifts in the case of $\mathcal{A}_{uBd}$ and $\mathcal{A}_{ud}$ and give this characterization for a larger family of pseudo-shifts. In order to do this, we introduce the following notion.

\begin{definition}
Let $f_s:\N\to \N,\ s\in[N]$ be increasing maps and $\A$ be a family of subsets of $\N$. The family $\A$ is called \emph{$(f_s)_{s=1}^N$-almost partition regular} if for any $l\geq 1$, any $B\in \A$ there exists $A\subseteq B, A\in \A$ such that 
\[
f_s^m([l])\cap f_t^n([l])=\emptyset  \qquad \forall m, n\in A, m\neq n, \forall s,t\in [N].
\]
\end{definition}

\begin{theorem}[Bonilla and Grosse-Erdmann \cite{BoGr}]
Let $X$ be a complete metric space, $Y$ a separable metric space and $T_n:X\to Y$, $n\ge 0$, continuous mappings. Let $\mathcal{A}=\bigcup_{\delta\in D}\bigcap_{\mu\in M} \mathcal{A}_{\delta,\mu}$ be an upper Furstenberg family. If for any non-empty open subset $V$ of $Y$, there is some $\delta\in D$ such that for any non-empty open subset $U$ of $X$, there is some $x\in U$ such that 
\[\{n\ge 0:T_nx\in V\}\in \bigcap_{\mu\in M} \mathcal{A}_{\delta,\mu}\]
then $(T_n)$ admits an $\mathcal{A}$-universal point, i.e. there exists $x\in X$ such that for any non-empty open set $U$ in $Y$, $\{n\ge 0:T_nx\in U\}\in \mathcal{A}$.
\end{theorem}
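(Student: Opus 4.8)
The plan is to run a Baire category argument in the complete metric space $X$, in the spirit of the classical Birkhoff transitivity theorem, but phrased entirely through the level families $\mathcal{A}_{\delta,\mu}$. First I would fix a countable base $(V_k)_{k\ge 1}$ of non-empty open subsets of $Y$, which exists because $Y$ is separable and metric. For each $k$ I apply the hypothesis to $V=V_k$ to obtain a single index $\delta_k\in D$ such that every non-empty open $U\subseteq X$ contains a point $x$ with $\{n\ge 0: T_nx\in V_k\}\in\bigcap_{\mu\in M}\mathcal{A}_{\delta_k,\mu}$. The crucial feature is that $\delta_k$ depends only on $V_k$, so that I never have to take a union over $D$; I only ever intersect over the countable index sets $k$ and $\mu$. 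I then set
\[
G_{k,\mu}:=\bigl\{x\in X: \{n\ge 0: T_nx\in V_k\}\in\mathcal{A}_{\delta_k,\mu}\bigr\},\qquad k\ge 1,\ \mu\in M,
\]
and aim to show that each $G_{k,\mu}$ is open and dense.

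Denseness is immediate from the hypothesis: given any non-empty open $U$, the point $x\in U$ furnished above satisfies $\{n: T_nx\in V_k\}\in\bigcap_{\mu}\mathcal{A}_{\delta_k,\mu}\subseteq\mathcal{A}_{\delta_k,\mu}$, so $x\in U\cap G_{k,\mu}$. For openness I would invoke the defining structure of an upper Furstenberg family, namely that each $\mathcal{A}_{\delta,\mu}$ is hereditary upward and \emph{finitely determined from below}: a set lies in $\mathcal{A}_{\delta,\mu}$ if and only if some finite subset of it already lies in $\mathcal{A}_{\delta,\mu}$ (for the model cases $\mathcal{A}_{ud}$ and $\mathcal{A}_{uBd}$ this is transparent, since membership is witnessed by a single finite window on which the density threshold is achieved). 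Consequently
\[
G_{k,\mu}=\bigcup_{\substack{F\subseteq\mathbb{N}_0\ \text{finite}\\ F\in\mathcal{A}_{\delta_k,\mu}}}\ \bigcap_{n\in F}T_n^{-1}(V_k),
\]
and each inner intersection is a \emph{finite} intersection of open sets, since the $T_n$ are continuous and $V_k$ is open; hence $G_{k,\mu}$ is open.

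Finally, as $X$ is a complete metric space it is a Baire space, so $R:=\bigcap_{k\ge1}\bigcap_{\mu\in M}G_{k,\mu}$ is dense, and in particular non-empty. For any $x\in R$ and any non-empty open $V\subseteq Y$ I choose a basic $V_k\subseteq V$; then $\{n:T_nx\in V_k\}\in\bigcap_{\mu}\mathcal{A}_{\delta_k,\mu}\subseteq\mathcal{A}$, and since this set is contained in $\{n:T_nx\in V\}$ and the Furstenberg family $\mathcal{A}$ is hereditary upward, $\{n:T_nx\in V\}\in\mathcal{A}$ as well; thus $x$ is an $\mathcal{A}$-universal point. The only delicate point is the openness of the $G_{k,\mu}$, which is exactly where the word ``upper'' is used: the finite-determination property is what forces the level sets to be open, and it is precisely this property that fails for densities defined through a $\liminf$, such as $\mathcal{A}_{ld}$, consistent with the absence of any such Birkhoff-type characterization for frequent hypercyclicity. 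I would also record that the argument requires $M$ to be countable, which is part of the definition of an upper Furstenberg family.
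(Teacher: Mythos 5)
Your proof is correct. The paper itself does not prove this statement---it is quoted verbatim from Bonilla and Grosse-Erdmann \cite{BoGr}---and your Baire-category argument (countable base of $Y$, a single $\delta_k$ per basic open set $V_k$, the level sets $G_{k,\mu}$ shown open via finite determination plus upward heredity and dense via the hypothesis, then Baire and upward heredity of $\mathcal{A}$ to conclude) is precisely the original proof of that theorem, including the correct observation that countability of $M$ and the finite-determination property of the families $\mathcal{A}_{\delta,\mu}$ are exactly what the word ``upper'' supplies.
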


By considering the sequence of operators $(T^n_1\times\cdots\times T^n_N)\in L(X,X^N)$, we get the following.


\begin{theorem}
Let $X$ be a separable Banach space, $T_1,\cdots,T_N\in L(X)$. If for any non-empty open subsets $V_1,\dots, V_N$ of $X$, there is some $\delta>0$ such that for any non-empty open subset $U$ of $X$, there is some $x\in U$ such that 
\[\overline{\emph{dens}}(\bigcap_{s\in [N]}\{n\ge 0:T^n_s x\in V_s\})>\delta \quad\text{\emph{(}resp. $\overline{\emph{Bd}}(\bigcap_{s\in [N]}\{n\ge 0:T^n_sx\in V_s\})>\delta$\emph{)}}\]
then $T_1,\cdots,T_N$ are disjoint upper frequently hypercyclic (resp.  disjoint reiteratively hypercyclic).
\end{theorem}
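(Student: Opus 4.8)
The plan is to deduce this statement directly from the Bonilla--Grosse-Erdmann theorem quoted immediately above, by recognizing the disjoint hypercyclicity setting as a special case of the $\mathcal{A}$-universality framework applied to a cleverly chosen sequence of maps into a product space. The key observation is that the operators $T_1,\dots,T_N$ are disjoint $\mathcal{A}$-hypercyclic precisely when the single sequence of maps $\Phi_n := T_1^n\times\cdots\times T_N^n \colon X\to X^N$ admits an $\mathcal{A}$-universal point. Indeed, for a diagonal point $x\in X$, the sum operator $T_1\oplus\cdots\oplus T_N$ satisfies $(T_1\oplus\cdots\oplus T_N)^n(x,\dots,x)=\Phi_n(x)$, so $x$ is a disjoint $\mathcal{A}$-hypercyclic vector exactly when $\{n\ge 0:\Phi_n(x)\in W\}\in\mathcal{A}$ for every non-empty open $W\subseteq X^N$.

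The first step is to set up the correct instances of the families $\mathcal{A}_{ud}$ and $\mathcal{A}_{uBd}$ as upper Furstenberg families of the form $\bigcup_{\delta\in D}\bigcap_{\mu\in M}\mathcal{A}_{\delta,\mu}$, with $D=(0,\infty)$ indexing the density threshold $\delta$; this is exactly the representation under which the quoted theorem is designed to operate, so I would simply invoke it rather than reprove the Furstenberg structure. Taking $X$ as the complete metric space, $Y=X^N$ as the separable metric target, and $T_n=\Phi_n$ as the continuous mappings, the hypothesis of the quoted theorem demands: for each non-empty open $V\subseteq X^N$, some $\delta>0$ such that every non-empty open $U\subseteq X$ contains a point $x$ with $\{n\ge 0:\Phi_n(x)\in V\}\in\bigcap_{\mu\in M}\mathcal{A}_{\delta,\mu}$.

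The second step is to match this to the hypothesis we are \emph{given}. Every non-empty open $V\subseteq X^N$ contains a basic open box $V_1\times\cdots\times V_N$, and since $\{n:\Phi_n(x)\in V_1\times\cdots\times V_N\}=\bigcap_{s\in[N]}\{n:T_s^n x\in V_s\}$, our assumption furnishes a $\delta>0$ so that every non-empty open $U$ has a point $x$ with $\overline{\mathrm{dens}}(\bigcap_{s\in[N]}\{n:T_s^nx\in V_s\})>\delta$ (respectively with upper Banach density exceeding $\delta$). The only remaining bookkeeping is to check that the condition ``$\overline{\mathrm{dens}}(\,\cdot\,)>\delta$'' is exactly the membership ``$\{n:\Phi_n(x)\in V\}\in\bigcap_{\mu\in M}\mathcal{A}_{\delta,\mu}$'' in the Furstenberg representation of $\mathcal{A}_{ud}$ (resp.\ $\mathcal{A}_{uBd}$); since a set of upper density strictly greater than $\delta$ sits inside a smaller box $V_1\times\cdots\times V_N\subseteq V$ and density is monotone under enlarging the target, the implication passes to the larger open set $V$. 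Applying the quoted theorem then yields an $\mathcal{A}$-universal point $x$ for $(\Phi_n)$, i.e.\ a disjoint upper frequently (resp.\ reiteratively) hypercyclic vector for $T_1,\dots,T_N$, which is the conclusion.

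I do not expect a serious obstacle here: the entire content is the translation from the product-space universality language of Bonilla--Grosse-Erdmann into the diagonal disjointness language. The one point requiring genuine care is confirming that $\overline{\mathrm{dens}}$ and $\overline{\mathrm{Bd}}$ really are realized as upper Furstenberg families with the density threshold playing the role of the index $\delta\in D$ and that the condition with strict inequality ``$>\delta$'' aligns with membership in $\bigcap_{\mu\in M}\mathcal{A}_{\delta,\mu}$ after a harmless reduction of $\delta$ if needed. This is standard and already underlies the Bonilla--Grosse-Erdmann framework, so the proof should be short.
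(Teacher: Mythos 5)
Your proposal is correct and takes exactly the paper's approach: the paper deduces this theorem from the quoted Bonilla--Grosse-Erdmann result with the single remark that one should consider the sequence of maps $(T_1^n\times\cdots\times T_N^n)\in L(X,X^N)$, which is precisely your reduction. The details you supply --- identifying disjoint $\mathcal{A}$-hypercyclic vectors with $\mathcal{A}$-universal points of $(\Phi_n)$, passing from an arbitrary open $V\subseteq X^N$ to a basic box $V_1\times\cdots\times V_N$, using monotonicity of $\overline{\operatorname{dens}}$ and $\overline{\operatorname{Bd}}$ under inclusion, and matching the threshold $\delta$ with the upper Furstenberg representation of $\mathcal{A}_{ud}$ and $\mathcal{A}_{uBd}$ --- are exactly what the paper leaves implicit.
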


In the case of pseudo-shifts, we can still simplify this criterion since pseudo-shifts have dense generalized kernel.


\begin{theorem}\label{thmsimp}
Let $T_1=T_{f_1,w_1},\dots,T_N=T_{f_N,w_N}$ be unilateral pseudo-shifts on $X=\ell^p(\mathbb{N})$ with $1\le p<\infty$ or $c_0(\mathbb{N})$. If for any non-empty open subsets $V_1,\dots, V_N$ of $X$,  there is some $x\in X$ such that
\[\overline{\emph{dens}}(\bigcap_{s\in [N]}\{n\ge 0:T^n_s x\in V_s\})>0 \quad\text{\emph{(}resp. $\overline{\emph{Bd}}(\bigcap_{s\in [N]}\{n\ge 0:T^n_s x\in V_s\})>0$\emph{)}}\]
then $T_1,\cdots,T_N$ are disjoint upper frequently hypercyclic (resp. disjoint reiteratively hypercyclic).
\end{theorem}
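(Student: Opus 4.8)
The plan is to deduce this statement from the preceding theorem by showing that, for pseudo-shifts, the weak recurrence hypothesis stated here automatically upgrades to the uniform-over-every-$U$ hypothesis required there. The engine of the upgrade is the dense generalized kernel of each $T_s=T_{f_s,w_s}$, already noted in the paragraph before the theorem: since $f_s$ is strictly increasing with $f_s(1)>1$, one has $f_s(j)>j$ for all $j$, so the backward $f_s$-orbit of any index $j$ strictly decreases and eventually leaves $\operatorname{im}(f_s)$; hence $T_s^m e_j=0$ for $m$ large, and $c_{00}$ lies in the generalized kernel of each $T_s$. In particular, for any $u\in c_{00}$ there is an $m_0$ with $T_s^m u=0$ for all $m\ge m_0$ and all $s\in[N]$.

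Fix nonempty open sets $V_1,\dots,V_N$. By hypothesis there is a single $x\in X$ with $\delta_0:=\overline{\text{dens}}\big(\bigcap_{s\in[N]}\{n:T_s^n x\in V_s\}\big)>0$ (resp. with $\overline{\text{Bd}}$ of the same set positive). I will show that $\delta:=\delta_0/2$ witnesses the hypothesis of the preceding theorem, i.e. that \emph{every} nonempty open $U$ contains a vector whose corresponding density exceeds $\delta$. The key structural fact is that the asymptotic orbit of $x$ depends only on its tail: writing $x=x_{[K]}+x^{(K)}$ with $x_{[K]}=\sum_{j\le K}x_je_j\in c_{00}$ and $x^{(K)}=\sum_{j>K}x_je_j$, we have $\|x^{(K)}\|\to 0$ as $K\to\infty$, since $x\in\ell^p(\N)$ or $c_0(\N)$.

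Given $U$, choose $u\in U\cap c_{00}$ (possible as $c_{00}$ is dense and $U$ is open) and set $x':=u+x^{(K)}$ with $K$ large enough that $\|x^{(K)}\|$ is smaller than the radius of a ball about $u$ contained in $U$, so that $x'\in U$. Both $u$ and $x_{[K]}$ lie in $c_{00}$, hence $T_s^m u=T_s^m x_{[K]}=0$ for all $m$ beyond some $n_0$ and all $s$; consequently $T_s^n x'=T_s^n x^{(K)}=T_s^n x$ for every $n\ge n_0$ and every $s\in[N]$. Because the upper density and the upper Banach density are unaffected by modifications on a finite set of indices, this gives $\overline{\text{dens}}\big(\bigcap_{s\in[N]}\{n:T_s^n x'\in V_s\}\big)=\delta_0>\delta$ (resp. the same identity for $\overline{\text{Bd}}$). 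Thus $x'\in U$ has the required density, the hypothesis of the preceding theorem holds with this $\delta$, and $T_1,\dots,T_N$ are disjoint upper frequently (resp. reiteratively) hypercyclic.

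The argument is essentially routine once the dense generalized kernel is in hand; the only point requiring care is that a \emph{single} $\delta$ must serve for every $U$. This is automatic here, because $\delta$ is extracted from the one vector $x$ attached to $V_1,\dots,V_N$, and the perturbation $u+x^{(K)}$—small in norm yet equal to $x$ along its entire far tail—leaves the asymptotic orbit, and hence the relevant density, unchanged while placing the vector inside the prescribed neighborhood $U$.
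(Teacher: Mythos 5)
Your proposal is correct and follows essentially the same route as the paper: the paper also fixes the vector $x$ furnished by the hypothesis and, for each open $U$, perturbs it by an element of $c_{00}$ (your $x'=u+x^{(K)}$ is exactly $x+y$ with $y=u-x_{[K]}\in c_{00}$), then uses that pseudo-shifts eventually annihilate $c_{00}$ so the relevant upper (Banach) density is unchanged. Your write-up is in fact slightly more careful on two points the paper leaves implicit: the explicit construction of the perturbation, and the use of $\delta_0/2$ to secure the strict inequality required by the preceding theorem.
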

\begin{proof}
Let  $V_1,\dots, V_N$ be non-empty open subsets of $X$ and $x\in X$ such that
\[\delta:=\overline{\text{dens}}(\bigcap_{s\in [N]}\{n\ge 0:T^n_s x\in V_s\})>0 \quad\text{(resp. $\delta:=\overline{Bd}(\bigcap_{s\in [N]}\{n\ge 0:T^n_s x\in V_s\})>0$)}.\]
Let $U$ be a non-empty open subset of $X$. It suffices to remark that there exists $y\in c_{00}$ such that $x+y\in U$ and that
\[\overline{\text{dens}}(\bigcap_{s\in [N]}\{n\ge 0:T^n_s(x+y)\in V_s\})=\delta \quad\text{(resp. $\delta:=\overline{Bd}(\bigcap_{s\in [N]}\{n\ge 0:T^n_s(x+y)\in V_s\})=\delta$)}\]
since $T^n_s(y)=0$ for $n$ sufficiently big.
\end{proof}

We are now in a good position to give a simplified characterization of disjoint reiterative and upper frequent hypercyclicity for pseudo-shifts. We first focus on the $\ell^p(\mathbb{N})$ case.

\begin{theorem}
\label{simpcaraclp}
Let $T_1=T_{f_1,w_1},\dots,T_N=T_{f_N,w_N}$ be unilateral pseudo-shifts on $\ell^p(\mathbb{N})$ with $1\le p<\infty$ and $\mathcal{A} = \A_{uBd}$ or $\A_{ud}$. If $\A$ is $(f_s)_{s=1}^N$-almost partition regular then the family $T_1,\dots,T_n$ are disjoint $\mathcal{A}$-hypercyclic if and only if
\begin{enumerate}
\item for any $j\ge 1$ and any $s\in [N]$, we have  $\sum_{n\ge 1}\frac{1}{|W^s_{j, n}|^p}<\infty$,

\item  for any $l\ge 1$, any $(a_{s,j})_{s\in[N],j\in [l]}$ with $a_{s,j}\ne 0$, any $\varepsilon>0$, there exists $A\in \mathcal{A}$ such that for any $s\ne t\in [N]$, any $n\in A$,
\begin{enumerate}
\item 
for any $1\le j\le l$,
\[\sum_{m\in A: f_t^m(j)\in f_s^n((l,+\infty))} \frac{\abs{W^s_{f_s^{-n}(f_t^m(j)),n}}^p}{\abs{W^t_{j,m}}^p}\le \varepsilon,\]
\item for any $j\in f^n_t([l])\cap f_s^n([l])$,
\[
\left|\frac{W^s_{f_s^{-n}(j),n}}{W^t_{f_t^{-n}(j),n}}-\frac{a_{s,f_s^{-n}(j)}}{a_{t,f_t^{-n}(j)}}\right|\le \varepsilon
\]
\end{enumerate}
\end{enumerate}
where $W^s_{l, n}=\prod_{\nu=1}^nw_{s, f_s^\nu(l)}$, for all $s\in [N]$.
\end{theorem}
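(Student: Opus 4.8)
The plan is to prove the two implications separately, treating necessity as a streamlined version of the necessity argument in Theorem~\ref{caraclp}, and deducing sufficiency from the Birkhoff-type criterion Theorem~\ref{thmsimp}. The whole point of the simplification is that for $\mathcal{A}_{ud}$ and $\mathcal{A}_{uBd}$ a single return set of positive (Banach) density to each target already forces disjoint $\mathcal{A}$-hypercyclicity, so we never assemble a sequence $(A_k)$ nor track a sequence $(\varepsilon_k)$.

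For necessity, suppose $x$ is a disjoint $\mathcal{A}$-hypercyclic vector. Condition (1) is immediate: by Proposition~\ref{propws} each weighted shift $B_{v^s_j}$ with $v^s_{j,n}=w_{s,f_s^{n-1}(j)}$ is $\mathcal{A}$-hypercyclic on $\ell^p(\mathbb{N})$, and the characterization of $\mathcal{A}$-hypercyclic weighted shifts on $\ell^p(\mathbb{N})$ gives $\sum_{n\ge 1}\abs{W^s_{j,n}}^{-p}<\infty$. For (2), fix $l$, a family $(a_{s,j})$ with $a_{s,j}\ne 0$, and $\varepsilon>0$; put $C=l/\min_{s,j}\abs{a_{s,j}}$ and let $A=\{n:\norm{T_s^nx-Ca_s}<\tau\ \forall s\in[N]\}$, which lies in $\mathcal{A}$ because it is a return set of $x$ to an open set. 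As in Theorem~\ref{caraclp}, for $n\in A$, $j\le l$, $s\in[N]$ one has $\abs{W^s_{j,n}x_{f_s^n(j)}}\ge l-\tau$, while $\sum_{i>l}\abs{W^s_{i,n}x_{f_s^n(i)}}^p<\tau^p$. Writing each summand in (2a) as $\abs{W^s_{f_s^{-n}(f_t^m(j)),n}x_{f_t^m(j)}}^p/\abs{W^t_{j,m}x_{f_t^m(j)}}^p$, the numerators are distinct tail terms of the $s$-th orbit and sum to at most $\tau^p$, while the denominators exceed $(l-\tau)^p$; hence (2a) is bounded by $\tau^p/(l-\tau)^p$. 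Condition (2b) follows from the same ratio estimate used for condition (4a) in Theorem~\ref{caraclp}, now applied directly to the target $(a_{s,j})$. Choosing $\tau$ small enough makes both bounds $\le\varepsilon$.

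For sufficiency I would invoke Theorem~\ref{thmsimp}: it suffices, given non-empty open $V_1,\dots,V_N$, to produce a single $x$ with $\overline{\text{dens}}(\bigcap_s\{n:T_s^nx\in V_s\})>0$ (resp. $\overline{\text{Bd}}>0$). Shrinking the $V_s$ to balls $B(a_s,\eta)$ with $a_s\in c_{00}$ and perturbing, I may assume $a_{s,j}\ne 0$ for all $j\le l:=\max_s\deg(a_s)$. Apply condition (2) to $(a_{s,j})$ and some small $\varepsilon$ to obtain $B\in\mathcal{A}$ satisfying (2a)--(2b); then, since $\mathcal{A}$ is $(f_s)_{s=1}^N$-almost partition regular, extract $A_0\subseteq B$, $A_0\in\mathcal{A}$, with $f_s^m([l])\cap f_t^n([l])=\emptyset$ for all $m\ne n\in A_0$ and all $s,t$ (both (2a) and (2b) are inherited by subsets of $B$). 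Define $x=\sum_{n\in A_0}S_n$ with $S_n=\sum_{t=1}^N\sum_{j\in J_{n,t,l}}\frac{a_{t,j}}{W^t_{j,n}}e_{f_t^n(j)}$, where the $J_{n,t,l}$ are the disjointification sets from the proof of Theorem~\ref{caraclp}. The disjointness of the supports together with (1) gives $\norm{x}^p\le\sum_{s,j}\abs{a_{s,j}}^p\sum_{n\ge1}\abs{W^s_{j,n}}^{-p}<\infty$. For $n\in A_0$, the diagonal term $T_s^nS_n$ matches $a_s$ on the coordinates $\le l$ up to an error governed by (2b), while its tail together with all cross terms $\sum_{m\ne n}T_s^nS_m$ with $t\ne s$ (which, by the support disjointness, land entirely in coordinates $>l$) is controlled in norm by summing (2a) over $t\ne s$ and $m\in A_0$.

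The main obstacle is the same-operator case $t=s$ of the cross terms, namely $\sum_{m\in A_0,\,m>n}\abs{W^s_{f_s^{-n}(f_s^m(j)),n}}^p/\abs{W^s_{j,m}}^p$, which condition (2a) does not bound. Using the cocycle identity $W^s_{j,m}=W^s_{j,m-n}W^s_{f_s^{m-n}(j),n}$, this sum collapses to $\sum_{m>n,\,m\in A_0}\abs{W^s_{j,m-n}}^{-p}$, a tail of the convergent series in (1) starting at index $\min\{m-n:m\in A_0,\,m>n\}$. In Theorem~\ref{caraclp} smallness was forced by the separation in Lemma~\ref{lemma.polansky2}, which is unavailable here. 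Instead I would thin $A_0$ to an arithmetic progression: writing $A_0=\bigsqcup_{r=0}^{M-1}(A_0\cap(r+M\mathbb{Z}))$ and using subadditivity of $\overline{\text{dens}}$ (resp. $\overline{\text{Bd}}$), some residue class $A'=A_0\cap(r+M\mathbb{Z})$ still lies in $\mathcal{A}$, while all its gaps are $\ge M$. Then for $n<m$ in $A'$ one has $m-n\ge M$, so the $t=s$ sum is at most $\sum_{i\ge M}\abs{W^s_{j,i}}^{-p}$, which by (1) tends to $0$ as $M\to\infty$. Passing to such an $A'$ (which still satisfies (2a), (2b) and the support-disjointness since $A'\subseteq A_0$) and choosing $M$ large and $\varepsilon$ small makes $\norm{T_s^nx-a_s}<\eta$ for every $n\in A'$ and $s\in[N]$; hence $A'\subseteq\bigcap_s\{n:T_s^nx\in V_s\}$ has positive upper (Banach) density, and Theorem~\ref{thmsimp} yields disjoint $\mathcal{A}$-hypercyclicity.
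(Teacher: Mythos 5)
Your proposal is correct, and its skeleton coincides with the paper's own proof: necessity via return-set estimates (the paper routes this through Theorem~\ref{caraclp} and Remark~\ref{remreg}, noting that condition (2a) here comes from condition (2) of Theorem~\ref{caraclp} for $m\ne n$ and condition (4b) for $m=n$, while you inline the same computation with the single target $(Ca_s)_s$ — same substance), and sufficiency via Theorem~\ref{thmsimp} with the identical vector $x=\sum_{n}\sum_{t}\sum_{j\in J_{n,t}}\frac{a_{t,j}}{W^t_{j,n}}e_{f_t^n(j)}$, using (2a) for the cross terms $t\ne s$, (2b) for the diagonal coordinates, and the cocycle identity $W^s_{j,m}=W^s_{j,m-n}W^s_{f_s^{m-n}(j),n}$ together with condition (1) for the terms with $t=s$. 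The one genuine divergence is how you secure the gap $\abs{m-n}\ge M$ that makes the $t=s$ tail $\sum_{m}\abs{W^s_{j,m-n}}^{-p}$ small: the paper obtains it by invoking almost partition regularity ("we can assume \dots\ $\abs{m-n}\ge M$"), which, strictly speaking, requires applying the definition with a parameter $l'$ large enough that $f_s^k(1)\le l'$ for all $k<M$ — a justification the paper leaves implicit — whereas you split $A_0$ into residue classes modulo $M$ and use finite subadditivity of $\overline{\text{dens}}$ and $\overline{\text{Bd}}$ to keep one class in $\mathcal{A}$, all gaps in that class being multiples of $M$. Your device is more elementary and self-contained (it needs nothing beyond subadditivity of the two densities, works identically for $\mathcal{A}_{ud}$ and $\mathcal{A}_{uBd}$, and all hypotheses (2a), (2b) and the support disjointness pass to the subset $A'$), at the cost of one extra thinning step; the paper's route keeps the entire separation mechanism under the single hypothesis of almost partition regularity, but owes the reader the argument that this hypothesis actually forces a lower bound on $\abs{m-n}$.
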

\begin{proof}
If $T_1,\dots,T_N$ are disjoint $\mathcal{A}$-hypercyclic, we can apply Theorem~\ref{caraclp} in view of Remark~\ref{remreg}. It suffices then to remark that the condition (2)(a) of Theorem~\ref{simpcaraclp} follows from the condition (2) of Theorem~\ref{caraclp} for $m\ne n$ and from the condition (4)(b) of Theorem~\ref{caraclp} for $m=n$.


We now prove the other implication thanks to Theorem~\ref{thmsimp}. Let $V_1,\dots, V_N$ be non-empty open subsets of $\ell^p(\mathbb{N})$. There exist $l\ge 1$, $(a_{s,j})_{s\in[N],j\in [l]}$ with $a_{s,j}\ne 0$ and $\varepsilon>0$ such that for every $s\in [N]$, $B(a_s,\varepsilon)\subset V_s$ where $a_s=\sum_{j=1}^l a_{s,j}e_j$.

By assumption, we know that  for any $j\ge 1$ and any $s\in [N]$, we have  \[\sum_{n\ge 1}\frac{1}{|W^s_{j, n}|^p}<\infty\]
and that for every $\tau>0$, there exists $A\in \mathcal{A}$ such that for any $s\ne t\in [N]$, any $n\in A$,
\begin{enumerate}
\item 
for any $1\le j\le l$,
\[\sum_{m\in A: f_t^m(j)\in f_s^n((l,+\infty))} \frac{\abs{W^s_{f_s^{-n}(f_t^m(j)),n}}^p}{\abs{W^t_{j,m}}^p}\le \tau,\]
\item for any $j\in f^n_t([l])\cap f_s^n([l])$,
\[
\left|\frac{W^s_{f_s^{-n}(j),n}}{W^t_{f_t^{-n}(j),n}}-\frac{a_{s,f_s^{-n}(j)}}{a_{t,f_t^{-n}(j)}}\right|\le \tau.
\]
\end{enumerate}
Let $M\ge 1$ such that for any $j\le l$ and any $s\in [N]$, we have  \[\sum_{n\ge M}\frac{1}{|W^s_{j, n}|^p}\le \tau.\]
Since $\mathcal{A}$ is $(f_s)_{s=1}^N$-almost partition regular, we can assume that
\[
f_s^m([l])\cap f_t^n([l])=\emptyset  \qquad \forall m, n\in A, m\neq n, \forall s, t\in [N]
\]
and that for any $m\ne n\in A$, $\abs{m-n}\ge M$. 

We then let
\[x=\sum_{n\in A}\sum_{t=1}^{N}\sum_{j\in J_{n,t}}\frac{1}{W^t_{j,n}}a_{t,j}e_{f_t^n(j)}\]
where $J_{n,1}=[l]$ and $J_{n,t+1}=[l]\backslash f_{t+1}^{-n}(\bigcup_{s\le t}f_{s}^n([l]))$ for every $t\in [N-1]$.  
By the definition of sets $J_{n,s}$ and our assumptions on $A$, we get that for every $m\in A$, $n\in A$, if $m\ne n$ or $t\ne s\in [N]$ then 
 \[f^m_t(J_{m,t})\cap f^n_{s}(J_{n,s})=\emptyset.\]
It follows that the vector $x$ is well-defined since
\begin{align*}
\|x\|^p
&=\sum_{n\in A}\sum_{t=1}^{N}\sum_{j\in J_{n,t}}\frac{1}{|W^t_{j,n}|^p}|a_{t,j}|^p\\
&\le Nl(\max_{s\in [N]}\|a_s\|^p)\max_{j\in [l], t\in [N]}\left(\sum_{n\in A}\frac{1}{|W^t_{j,n}|^p}\right)<\infty.
\end{align*}
It remains to show that  for every $n\in A$, every $s\in [N]$, $\|T^n_sx-a_s\|<\varepsilon$.
Let $n\in A$ and $s\in [N]$. Note that for every $m\ne n$,
$f_s^{-n}(f_t^m([l]))\cap [l]=\emptyset$ and that if $i\in [l]$ then there are unique $t_i\in [N]$ and $j_i\in J_{n,t_i}$ such that $f_s^{-n}(f^n_{t_i}(j_i))=i$. Therefore, we have 
\begin{align*}
\|T^n_sx-a_s\|^p&=\|\sum_{m\in A}\sum_{t=1}^{N}\sum_{j\in J_{m,t}}\frac{1}{W^t_{j,m}}a_{t,j}T^n_se_{f_t^m(j)}-a_s\|^p\\
&\le \left\|\sum_{i=1}^l(\frac{W^s_{i,n}}{W^{t_i}_{j_i,n}}a_{t_i,j_i}e_i-a_{s,i}e_i)\right\|^p\\
&\quad + \|\sum_{m\in A}\sum_{t=1}^{N}\sum_{j\in J_{m,t}:f_t^m(j)\in f_s^n((l,+\infty))}a_{t, j}\frac{W^s_{f_s^{-n}(f_t^m(j)),n}}{W^t_{j,m}}e_{f_s^{-n}(f_t^m(j))}\|^p\\
&\le \sum_{i=1}^l|\frac{W^s_{i,n}}{W^{t_i}_{j_i,n}}a_{t_i,j_i}-a_{s,i}|^p\\
&\quad + Nl(\max_{s\in [N]}\|a_s\|^p)\max_{t\in [N],j\in [l]}\|\sum_{m\in A:f_t^m(j)\in f_s^n((l,+\infty))}\frac{W^s_{f_s^{-n}(f_t^m(j)),n}}{W^t_{j,m}}e_{f_s^{-n}(f_t^m(j))}\|^p\\
&\le \sum_{i=1}^l|\frac{W^s_{i,n}}{W^{t_i}_{j_i,n}}a_{t_i,j_i}-a_{s,i}|^p+ Nl(\max_{s\in [N]}\|a_s\|^p)\tau.
\end{align*}
The last inequality for $s=t$ follows from the fact that if $m\in A, j\in [l]$ and $f^m_s(j)\in f^n_s((l, \infty))$ then $m>n$ and $m-n\geq M$. Therefore, for any $j\le l$ and any $s\in [N]$, we have  \begin{align*}\|\sum_{m\in A:f_s^m(j)\in f_s^n((l,+\infty))}\frac{W^s_{f_s^{-n}(f_s^m(j)),n}}{W^s_{j,m}}e_{f_s^{-n}(f_s^m(j))}\|^p&=
\|\sum_{m\in A:f_s^m(j)\in f_s^n((l,+\infty))}\frac{1}{W^s_{j,m-n}}e_{f_s^{m-n}(j)}\|^p\\
&\le \sum_{n\ge M}\frac{1}{|W^s_{j, n}|^p}\le \tau.
\end{align*}

Finally, for any $i\in [l]$, if we let $k_i:=f_{t_i}^{n}(j_i)=f_s^{n}(i)$, we have
\begin{align*}
\Big|\frac{W^s_{i,n}}{W^{t_i}_{j_i,n}}a_{t_i,j_i}-a_{s,i}\Big|&=
\Big|\frac{W^s_{f_{s}^{-n}(k_i),n}}{W^{t_i}_{f_{t_i}^{-n}(k_i),n}}a_{t_i,f_{t_i}^{-n}(k_i)}-a_{s,f_{s}^{-n}(k_i)}\Big|\\
&\le (\max_{s\in [N]}\|a_s\|) \Big|\frac{W^s_{f_{s}^{-n}(k_i),n}}{W^{t_i}_{f_{t_i}^{-n}(k_i),n}}-\frac{a_{s,f_{s}^{-n}(k_i)}}{a_{t_i,f_{t_i}^{-n}(k_i)}}\Big| \le\max_{s\in [N]}\norm{a_s} \tau
\end{align*}

and we get the desired result by considering $\tau$ sufficiently small.
\end{proof}

We turn our attention now to the $c_0(\mathbb{N})$ case.

\begin{theorem}
\label{simpcaracc0}
Let $T_1=T_{f_1,w_1},\dots,T_N=T_{f_N,w_N}$ be unilateral pseudo-shifts on $c_0(\mathbb{N})$ and $\mathcal{A} = \A_{uBd}$ or $\A_{ud}$. If $\A$ is $(f_s)_{s=1}^N$-almost partition regular then the family $T_1,\dots,T_n$ are disjoint $\mathcal{A}$-hypercyclic if and only if for any $l$, any $(a_{s,j})_{s\in[N],j\in [l]}$ with $a_{s,j}\ne 0$, any $\varepsilon>0$, there exists $A\in \mathcal{A}$ such that
\begin{enumerate}
\item for any $j\in [l]$ and any $s\in [N]$, we have  $\lim_{n\in A}|W^s_{j, n}|=\infty$,
\item 
for any $j\in [l]$, any $n\in A$, any $s, t\in[N]$, any $m\in A$ with $f_t^m(j)\in f_s^n((l,+\infty)),$
\[\frac{\abs{W^s_{f_s^{-n}(f_t^m(j)),n}}}{\abs{W^t_{j,m}}}\le \varepsilon,\]
\item for any $n\in A$, any $j\in f^n_t([l])\cap f_s^n([l])$ with $s\ne t$, 
\[
\left|\frac{W^s_{f_s^{-n}(j),n}}{W^t_{f_t^{-n}(j),n}}-\frac{a_{s,f_s^{-n}(j)}}{a_{t,f_t^{-n}(j)}}\right|\le \varepsilon
\]
\end{enumerate}
where $W^s_{l, n}=\prod_{\nu=1}^nw_{s, f_s^\nu(l)}$, for all $s\in [N]$.
\end{theorem}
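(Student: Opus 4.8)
The proof follows the template of Theorem~\ref{simpcaraclp}, with the $\ell^p$-sums replaced throughout by suprema. The decisive simplification in the $c_0$ setting is that, after the support-disjointness is arranged, each coordinate of $T_s^nx$ receives a contribution from at most one term of the series defining $x$; consequently the summability bookkeeping of the $\ell^p$ case disappears and conditions (2) and (3) enter only as pointwise estimates. For the forward implication I would argue directly from a disjoint $\mathcal{A}$-hypercyclic vector rather than unpacking Theorem~\ref{caracc0}, and for the converse I would invoke Theorem~\ref{thmsimp}.

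\emph{Forward direction.} Fix $l$, a family $(a_{s,j})_{s\in[N],j\in[l]}$ with $a_{s,j}\ne 0$, and $\varepsilon>0$; set $a_s=\sum_{j=1}^l a_{s,j}e_j$. For small $\tau>0$ the return-time set $A:=\{n:\|T_s^nx-a_s\|<\tau\text{ for all }s\in[N]\}$ of the open box $\prod_sB(a_s,\tau)$ lies in $\mathcal{A}$, since $(x,\dots,x)$ is $\mathcal{A}$-hypercyclic for $T_1\oplus\cdots\oplus T_N$. Reading off coordinates of $T_s^nx=\sum_kW^s_{k,n}x_{f_s^n(k)}e_k$ for $n\in A$ gives $|W^s_{j,n}x_{f_s^n(j)}-a_{s,j}|<\tau$ when $j\le l$ and $|W^s_{j,n}x_{f_s^n(j)}|<\tau$ when $j>l$. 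Condition (1) then follows because $x\in c_0$ forces $x_{f_s^n(j)}\to 0$ as $n\to\infty$ (the indices $f_s^n(j)$ tend to infinity), whence $|W^s_{j,n}|\ge(|a_{s,j}|-\tau)/|x_{f_s^n(j)}|\to\infty$. Conditions (2) and (3) follow by cancelling the common nonzero coordinate $x_\kappa$ in the relevant quotients of weights---with $\kappa=f_t^m(j)$ in (2) and $\kappa=j$ in (3)---and invoking the two-sided bounds above; taking $\tau$ small relative to $\min|a_{s,j}|$, $\max|a_{s,j}|$ and $\varepsilon$ closes this direction.

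\emph{Converse direction.} By Theorem~\ref{thmsimp} it suffices, given non-empty open $V_1,\dots,V_N$, to produce one $x\in X$ with $\overline{\text{dens}}(\bigcap_s\{n:T_s^nx\in V_s\})>0$ (resp. $\overline{\text{Bd}}>0$). Choose $l$, $(a_{s,j})$ with $a_{s,j}\ne 0$ and $\varepsilon>0$ with $B(a_s,\varepsilon)\subset V_s$, apply the hypothesis with a small $\tau$ to obtain $A\in\mathcal{A}$ satisfying (1)--(3), and, using that $\mathcal{A}$ is $(f_s)_{s=1}^N$-almost partition regular, shrink $A$ within $\mathcal{A}$ so that $f_s^m([l])\cap f_t^n([l])=\emptyset$ for all $m\ne n\in A$ and all $s,t\in[N]$. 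Then set
\[x=\sum_{n\in A}\sum_{t=1}^N\sum_{j\in J_{n,t}}\frac{a_{t,j}}{W^t_{j,n}}\,e_{f_t^n(j)},\]
with $J_{n,1}=[l]$ and $J_{n,t+1}=[l]\setminus f_{t+1}^{-n}(\bigcup_{s\le t}f_s^n([l]))$, exactly as in Theorem~\ref{caracc0}. The $J$-construction (for fixed $n$, different $t$) together with almost partition regularity (for different $n$) makes all the positions $f_t^n(j)$ pairwise distinct, so $x$ has one coefficient per coordinate; condition (1) makes the coefficients $a_{t,j}/W^t_{j,n}$ tend to $0$ along these positions, whence $x\in c_0$.

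\emph{The crux} is the uniform estimate $\|T_s^nx-a_s\|\le C\tau$ for every $n\in A$ and every $s\in[N]$. Since $T_s^n$ sends $e_{f_t^m(j)}$ to $W^s_{f_s^{-n}(f_t^m(j)),n}e_{f_s^{-n}(f_t^m(j))}$ when $f_t^m(j)$ lies in the range of $f_s^n$ and to $0$ otherwise, and the supports are disjoint, each coordinate of $T_s^nx$ is again a single term. I would split these into three groups: the diagonal terms $m=n$ landing in $[l]$, which by the construction of the $J_{n,t}$ produce, for each $i\in[l]$, exactly one quantity $a_{t_i,j_i}W^s_{i,n}/W^{t_i}_{j_i,n}$ with $t_i\le s$, forced to within $O(\tau)$ of $a_{s,i}$ by condition (3) (and exactly equal to $a_{s,i}$ when $t_i=s$); the diagonal overflow terms $m=n$ landing beyond $[l]$ (necessarily $t\ne s$), controlled by condition (2) with $m=n$; and the off-diagonal terms $m\ne n$, which the partition-regular disjointness forces to land beyond $[l]$ and which condition (2) with $m\ne n$ bounds. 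The supremum over coordinates is then $\le C\tau<\varepsilon$, so $A\subset\bigcap_s\{n:T_s^nx\in V_s\}$ and Theorem~\ref{thmsimp} applies. The main obstacle is precisely this uniform estimate: one must check that the combinatorial partition assigns each target coordinate $i\in[l]$ a unique source $(t_i,j_i)$ and that no other diagonal or off-diagonal term collides with it---here the disjointness from almost partition regularity, weaker than the separation used in Theorem~\ref{caracc0}, is exactly enough, precisely because the sup-norm of $c_0$ spares us any summation.
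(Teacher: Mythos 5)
Your proposal is correct, and its converse direction coincides with the paper's own proof: the same appeal to Theorem~\ref{thmsimp}, the same vector $x$ built from the sets $J_{n,t}$ after shrinking $A$ via almost partition regularity, and the same decomposition of the coordinates of $T_s^n x$ (the terms landing in $[l]$ matched through condition (3), all remaining terms --- the $m=n$ overflow and the $m\ne n$ terms, which disjointness forces past $[l]$ --- killed by condition (2)), with the sup-norm replacing the $\ell^p$ sums. Where you genuinely deviate is the forward implication: the paper disposes of it in one line by citing Theorem~\ref{caracc0} (via Remark~\ref{remreg}) and then matching conditions --- condition (2) here is assembled from conditions (2), (1) and (4)(b) of Theorem~\ref{caracc0} according to whether $m\ne n$ and whether $s=t$ --- whereas you re-run the return-set argument directly: take $A$ to be the return set of $(x,\dots,x)$ into the box $\prod_s B(a_s,\tau)$ and cancel the common coordinate $x_\kappa$ in the quotients of weights. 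The two routes rest on the same computation (yours is exactly the engine inside the proof of Theorem~\ref{caracc0}), but yours buys self-containedness and uniformity --- a single cancellation handles every case of condition (2) at once, with no sequence $(A_k)$, no scaling constants $C_l$, and no case analysis --- at the cost of redoing estimates the paper already has on file. A small bonus of your write-up: you note explicitly that condition (1) is what forces the coefficients $a_{t,j}/W^t_{j,n}$ of $x$ to tend to $0$, hence $x\in c_0$, whereas the paper's proof only records $\|x\|<\infty$.
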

\begin{proof}
If $T_1,\dots,T_N$ are disjoint $\mathcal{A}$-hypercyclic, we can apply Theorem~\ref{caracc0} in view of Remark~\ref{remreg}. It suffices then to remark that the condition (2) of Theorem~\ref{simpcaracc0} follows from the condition (2) of Theorem~\ref{caracc0} for $m\ne n$ and $s\ne t$, from  the condition (1) of Theorem~\ref{caracc0} for $m\ne n$ and $s=t$ and from the condition (4)(b) of Theorem~\ref{caracc0} for $m=n$ and $s\ne t$.

We now prove the other implication thanks to Theorem~\ref{thmsimp}. Let $V_1,\dots, V_N$ be non-empty open subsets of $c_0(\mathbb{N})$. There exist $l\ge 1$, $(a_{s,j})_{s\in[N],j\in [l]}$ with $a_{s,j}\ne 0$ and $\varepsilon>0$ such that for every $s\in [N]$, $B(a_s,\varepsilon)\subset V_s$ where $a_s=\sum_{j=1}^l a_{s,j}e_j$.

By assumption, we know that  for every $\tau>0$, there exists $A\in \mathcal{A}$ such that 
\begin{enumerate}
\item for any $j\in [l]$ and any $s\in [N]$, we have  $\lim_{n\in A}|W^s_{j, n}|=\infty$,
\item 
for any $j\in [l]$, any $n\in A$, any $s, t\in[N]$, any $m\in A$ with $f_t^m(j)\in f_s^n((l,+\infty))$,
\[\frac{\abs{W^s_{f_s^{-n}(f_t^m(j)),n}}}{\abs{W^t_{j,m}}}\le \tau,\]
\item for any $n\in A$, any $j\in f^n_t([l])\cap f_s^n([l])$ with $s\ne t$, 
\[
\left|\frac{W^s_{f_s^{-n}(j),n}}{W^t_{f_t^{-n}(j),n}}-\frac{a_{s,f_s^{-n}(j)}}{a_{t,f_t^{-n}(j)}}\right|\le \tau.
\]
\end{enumerate}

Since $\mathcal{A}$ is $(f_s)_{s=1}^N$-weakly partition regular, we can assume that
\[
f_s^m([l])\cap f_t^n([l])=\emptyset  \qquad \forall m, n\in A, m\neq n, \forall s, t\in [N].
\]

We then let
\[x=\sum_{n\in A}\sum_{t=1}^{N}\sum_{j\in J_{n,t}}\frac{1}{W^t_{j,n}}a_{t,j}e_{f_t^n(j)}\]
where $J_{n,1}=[l]$ and $J_{n,t+1}=[l]\backslash f_{t+1}^{-n}(\bigcup_{s\le t}f_{s}^n([l]))$ for every $t\in [N-1]$.  
By the definition of sets $J_{n,s}$ and our assumptions on $A$, we get that for every $m\in A$, $n\in A$, if $m\ne n$ or $t\ne s\in [N]$ then 
 \[f^m_t(J_{m,t})\cap f^n_{s}(J_{n,s})=\emptyset.\]
It follows that the vector $x$ is well-defined since
\begin{align*}
\|x\|
&=\max_{n\in A, t\in [N], j\in J_{n,t}}\frac{1}{|W^t_{j,n}|}|a_{t,j}|\\
&\le (\max_{s\in [N]}\|a_s\|)\max_{n\in A, t\in [N], j\in [l]}\frac{1}{|W^t_{j,n}|}<\infty.
\end{align*}
It remains to show that  for every $n\in A$, every $s\in [N]$, $\|T^n_sx-a_s\|<\varepsilon$.
Let $n\in A$ and $s\in [N]$. Note that for every $m\ne n$,
$f_s^{-n}(f_t^m([l]))\cap [l]=\emptyset$ and that if $i\in [l]$ then there are unique $t_i\in [N]$ and $j_i\in J_{n,t_i}$ such that $f_s^{-n}(f^n_{t_i}(j_i))=i$. Therefore, we have 
\begin{align*}
\|T^n_sx-a_s\|&=\|\sum_{m\in A}\sum_{t=1}^{N}\sum_{j\in J_{m,t}}\frac{1}{W^t_{j,m}}a_{t,j}T^n_se_{f_t^m(j)}-a_s\|\\
&\le \left\|\sum_{i=1}^l(\frac{W^s_{i,n}}{W^{t_i}_{j_i,n}}a_{t_i,j_i}e_i-a_{s,i}e_i)\right\|\\
&\quad + \|\sum_{m\in A}\sum_{t=1}^{N}\sum_{j\in J_{m,t}:f_t^m(j)\in f_s^n((l,+\infty))}a_{t, j}\frac{W^s_{f_s^{-n}(f_t^m(j)),n}}{W^t_{j,m}}e_{f_s^{-n}(f_t^m(j))}\|\\
&\le \max_{i\in [l]}|\frac{W^s_{i,n}}{W^{t_i}_{j_i,n}}a_{t_i,j_i}-a_{s,i}|\\
&\quad + (\max_{s\in [N]}\|a_s\|)\max_{t\in [N],j\in [l], m\in A:f_t^m(j)\in f_s^n((l,+\infty))}|\frac{W^s_{f_s^{-n}(f_t^m(j)),n}}{W^t_{j,m}}e_{f_s^{-n}(f_t^m(j))}|\\
&\le \max_{i\in [l]}|\frac{W^s_{i,n}}{W^{t_i}_{j_i,n}}a_{t_i,j_i}-a_{s,i}|+ (\max_{s\in [N]}\|a_s\|)\tau.
\end{align*}
Finally, for any $i\in [l]$, if we let $k_i:=f_{t_i}^{n}(j_i)=f_s^{n}(i)$, we have
\begin{align*}
\Big|\frac{W^s_{i,n}}{W^{t_i}_{j_i,n}}a_{t_i,j_i}-a_{s,i}\Big|&=
\Big|\frac{W^s_{f_{s}^{-n}(k_i),n}}{W^{t_i}_{f_{t_i}^{-n}(k_i),n}}a_{t_i,f_{t_i}^{-n}(k_i)}-a_{s,f_{s}^{-n}(k_i)}\Big|\\
&\le (\max_{s\in [N]}\|a_s\|) \Big|\frac{W^s_{f_{s}^{-n}(k_i),n}}{W^{t_i}_{f_{t_i}^{-n}(k_i),n}}-\frac{a_{s,f_{s}^{-n}(k_i)}}{a_{t_i,f_{t_i}^{-n}(k_i)}}\Big|\\
&\le (\max_{s\in [N]}\|a_s\|)\tau
\end{align*}
and we get the desired result by considering $\tau$ sufficiently small.
\end{proof}

Thanks to Theorem \ref{simpcaraclp} and Theorem \ref{simpcaracc0}, we get a simplified characterization of disjoint reiterative and upper frequent hypercyclicity for any family of weighted shifts. Indeed, for weighted shifts, we have $f^m_s([l])=[m+1, m+l]$ for any $s\in [N]$, and as remarked before $\A_{uBd}$ and $\A_{ud}$ are $(f_s)_{s=1}^N$-weakly partition regular, hence in particular $(f_s)_{s=1}^N$-almost partition regular. We state this characterization in the case of $\ell^p(\mathbb{N})$.

\begin{theorem}\label{shiftupper}
Let $T_1=B_{w_1},\dots,T_N=B_{w_N}$ be unilateral weighted shifts on $\ell^p(\mathbb{N})$ with $1\le p<\infty$ and $\mathcal{A} = \A_{uBd}$ or $\A_{ud}$. The family $T_1,\dots,T_n$ are disjoint $\mathcal{A}$-hypercyclic if and only if
\begin{enumerate}
\item for any $j\ge 1$ and any $s\in [N]$, we have  $\sum_{n\ge 1}\frac{1}{|W^s_{j, n}|^p}<\infty$,

\item  for any $l\ge 1$, any $(a_{s,j})_{s\in[N],j\in [l]}$ with $a_{s,j}\ne 0$, any $\varepsilon>0$, there exists $A\in \mathcal{A}$ such that for any $s\ne t\in [N]$, any $n\in A$,
\begin{enumerate}
\item 
for any $j\in [l]$,
\[\sum_{m\in A: m>n} \frac{\abs{W^s_{j+m-n,n}}^p}{\abs{W^t_{j,m}}^p}\le \varepsilon,\]
\item for any $j\in [l]$,
\[
\left|\frac{W^s_{j,n}}{W^t_{j,n}}-\frac{a_{s, j}}{a_{t, j}}\right|\le \varepsilon
\]
\end{enumerate}
\end{enumerate}
where $W^s_{l, n}=\prod_{\nu=1}^nw_{s, l+\nu}$, for all $s\in [N]$.
\end{theorem}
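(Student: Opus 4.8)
The plan is to derive this characterization directly from the simplified criterion of Theorem~\ref{simpcaraclp}, specialized to weighted shifts. Recall that $B_{w_s}$ is the pseudo-shift $T_{f_s,w_s}$ with $f_s(n)=n+1$ for every $s\in[N]$; hence $f_s^m([l])=[m+1,m+l]$, $f_s^n(i)=i+n$ and $f_s^{-n}(j)=j-n$ \emph{independently of} $s$, and $W^s_{l,n}=\prod_{\nu=1}^n w_{s,l+\nu}$ as in the statement. Since the maps $f_s$ all coincide, the sets $f_s^m([l])$ do not depend on $s$, and by Lemma~\ref{lemma.polansky2} the families $\A_{uBd}$ and $\A_{ud}$ are $(f_s)_{s=1}^N$-weakly partition regular, hence $(f_s)_{s=1}^N$-almost partition regular. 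Thus Theorem~\ref{simpcaraclp} is available, and it suffices to check that its conditions are equivalent to the ones stated here.

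First I would substitute the above formulas into the conditions of Theorem~\ref{simpcaraclp}. Condition~(1) is literally identical. In condition~(2)(b), the coincidence $f_s=f_t$ gives $f_t^n([l])\cap f_s^n([l])=[n+1,n+l]$ and $f_s^{-n}(j)=f_t^{-n}(j)=j-n$; setting $j'=j-n\in[l]$ turns the inequality into $\bigl|W^s_{j',n}/W^t_{j',n}-a_{s,j'}/a_{t,j'}\bigr|\le\varepsilon$, which is exactly condition~(2)(b) here. In condition~(2)(a), the membership $f_t^m(j)\in f_s^n((l,+\infty))$ reads $j+m>l+n$, i.e.\ $m>n+l-j$, while $f_s^{-n}(f_t^m(j))=j+m-n$, so the summand is $|W^s_{j+m-n,n}|^p/|W^t_{j,m}|^p$, as claimed. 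The sole genuine difference is therefore the index set of the sum in~(2)(a): Theorem~\ref{simpcaraclp} runs over $\{m\in A:m>n+l-j\}$, whereas the present statement runs over the larger set $\{m\in A:m>n\}$.

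Reconciling these two ranges is the one point requiring care, and I expect it to be the main (indeed the only) obstacle. For the implication from the stated conditions to disjoint $\A$-hypercyclicity the discrepancy is harmless: as $j\le l$ we have $n+l-j\ge n$, so $\{m\in A:m>n+l-j\}\subseteq\{m\in A:m>n\}$, and since all summands are nonnegative the bound $\varepsilon$ over the larger set yields the same bound over the smaller one; Theorem~\ref{simpcaraclp} then delivers disjoint $\A$-hypercyclicity. For the converse, I would use almost partition regularity to pass from the set $A$ produced by Theorem~\ref{simpcaraclp} to a subset $A'\subseteq A$, $A'\in\A$, satisfying $f_s^m([l])\cap f_t^n([l])=\emptyset$ for all $m\ne n\in A'$, i.e.\ with consecutive gaps at least $l$. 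On such an $A'$ any $m>n$ obeys $m\ge n+l>n+l-j$, while no $m\le n$ in $A'$ can satisfy $m>n+l-j$ because $j\le l$; hence $\{m\in A':m>n\}=\{m\in A':m>n+l-j\}$, and condition~(2)(a) in its stronger form over $A'$ is dominated by the one over $A$ given by Theorem~\ref{simpcaraclp}. Since condition~(1) is independent of the chosen set and condition~(2)(b) restricts verbatim from $A$ to $A'$, this completes the equivalence.
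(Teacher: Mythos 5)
Your proposal is correct and follows essentially the same route as the paper, which states Theorem~\ref{shiftupper} as a direct consequence of Theorem~\ref{simpcaraclp} by specializing to $f_s(n)=n+1$ and invoking Lemma~\ref{lemma.polansky2} for the (weak, hence almost) partition regularity of $\A_{uBd}$ and $\A_{ud}$. The one point the paper leaves implicit --- reconciling the summation range $\{m\in A: m>n\}$ with $\{m\in A: m>n+l-j\}$ by passing to a subset with gaps at least $l$ --- is exactly the detail you identify and resolve correctly.
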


\section{Examples}

As shown in \cite{BaRu} and \cite{BMPP1}, the notions of reiterative hypercyclicity, upper frequent hypercyclicity and frequent hypercyclicity coincide for weighted shifts on $\ell^p(\mathbb{N})$. This is in contrast with the disjoint case as the following two results show.


\begin{theorem}
There exist two upper frequently hypercyclic weighted shifts $B_v$ and $B_w$ on $\ell^p(\mathbb{N})$ with $1\le p<\infty$ such that
$B_v$ and $B_w$ are disjoint reiteratively hypercyclic but not disjoint upper frequently hypercyclic.  
\end{theorem}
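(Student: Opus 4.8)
The plan is to argue entirely through the simplified characterisation of Theorem~\ref{shiftupper}, whose statement is \emph{identical} for $\mathcal{A}=\mathcal{A}_{ud}$ and for $\mathcal{A}=\mathcal{A}_{uBd}$; thus the whole difference between disjoint upper frequent and disjoint reiterative hypercyclicity is encoded in the density of the set $A$ that realises the conditions. Writing $\rho_i=v_i/w_i$ and $Q(k)=\prod_{i\le k}\rho_i$, condition (2)(b) asks that the partial products $W^1_{j,n}/W^2_{j,n}=\prod_{\nu=1}^{n}\rho_{j+\nu}$ lie within $\varepsilon$ of the prescribed ratios $a_{1,j}/a_{2,j}$ for $j\in[l]$ and $n\in A$, while (2)(a) is a smallness condition on two tails. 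I will build $v,w$ bounded above by some $R$ (so that $B_v,B_w$ are bounded) with $\sum_n 1/|V_{1,n}|^p<\infty$ and $\sum_n 1/|W_{1,n}|^p<\infty$; by the equivalence recalled before Proposition~\ref{propws} this yields condition (1) of Theorem~\ref{shiftupper} and, at the same time, the individual upper frequent hypercyclicity of $B_v$ and $B_w$.

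The construction is by blocks. I enumerate all admissible target data $D_k=(l_k,(a_{s,j})_{s\in[2],\,j\in[l_k]},\varepsilon_k)$ with rational entries, arranging the enumeration so that every datum occurs for infinitely many $k$. To $D_k$ I attach a block $I_k=[c_k,c_k+L_k]$ of consecutive integers with $L_k\to\infty$, placed extremely sparsely, $c_{k+1}\ge 2^{\,c_k+L_k}$, so that $\sum_{s\le k}L_s=o(c_k)$. Off the blocks the weights equal a fixed large constant, which keeps $Q$ locally constant and forces $V_{1,n},W_{1,n}$ to grow geometrically. Inside $I_k$ I first steer $Q$, over a short transition, to the level dictated by $a_{1,1}/a_{2,1}$, and then at the points $n$ of an arithmetic progression of step $g_k$ inside $I_k$ I choose the finitely many weights $w_{n+2},\dots,w_{n+l_k}$ (or $v_{n+2},\dots,v_{n+l_k}$) so that the local ratios reproduce the pattern of $D_k$; since the weights are only required to be bounded \emph{above}, any prescribed positive ratio can be matched by making the relevant weight small, so that $\prod_{\nu=1}^{n}\rho_{j+\nu}\approx a_{1,j}/a_{2,j}$ for each $j\le l_k$. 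Between two consecutive good points the weights reset $Q$ to its base level. All these adjustments sit on a set of positions of zero density, so the geometric growth of $V_{1,n}$ and $W_{1,n}$, hence condition (1), is preserved.

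Now fix a target $D=(l,(a_{s,j}),\varepsilon)$. A direct computation shows that the summand in (2)(a) equals, up to a factor that is bounded because $Q$ is bounded, $\big(\prod_{i=1}^{m-n}v_i\big)^{-p}$ (respectively $\big(\prod_{i=1}^{m-n}w_i\big)^{-p}$); in particular it depends on the gap $m-n$ only through the \emph{initial} weights, and not on the position of the block. Hence there is a fixed step $g=g(\varepsilon)$, independent of the block, such that spacing the good points by $g$ makes both tails in (2)(a) at most $\varepsilon$. Choosing $A$ to be the good points of step $g$ inside a single block $I_k$ with $D_k=D$ and $L_k\gg g$ satisfies (2)(a) and (2)(b) simultaneously; this window of length $L_k$ contains a proportion at least $1/g$ of the points of $A$, and since $D$ is carried by blocks of unbounded length we get $\overline{\mathrm{Bd}}(A)\ge 1/g>0$. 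By Theorem~\ref{shiftupper} with $\mathcal{A}=\mathcal{A}_{uBd}$, $B_v$ and $B_w$ are disjoint reiteratively hypercyclic.

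For the failure of disjoint upper frequent hypercyclicity it suffices, again by Theorem~\ref{shiftupper}, to exhibit one target realisable on no set of positive upper density, and already $l=1$ works. I arrange $Q$ so that $Q(n+1)/Q(1)$ comes within $\varepsilon_0$ of a fixed value $\theta_0$ only while $n$ traverses the blocks assigned to the datum at level $\theta_0$, together with the $O(1)$ indices near each $c_k$ where a transition crosses $\theta_0$. The set $E$ of such $n$ lies in $\bigcup_k I_k$ together with a sparse set of isolated points, and since $\sum_{s\le k}L_s=o(c_k)$ we have $\udens(E)=0$. Any $A\in\mathcal{A}_{ud}$ satisfying (2)(b) for this target is contained in $E$, so $\udens(A)=0$, a contradiction. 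The delicate point is to reconcile three competing demands on $A$: condition (2)(a) separates the good points, positive upper Banach density clusters them in arbitrarily long windows, and the arbitrariness of the targets forces the weights to realise prescribed local ratios while keeping $\sum 1/|V_{1,n}|^p$ and $\sum 1/|W_{1,n}|^p$ finite. What makes this possible is the observation that the separation forced by (2)(a) is governed only by the gap $m-n$ through the initial weights, hence by a fixed bounded step at each fixed precision, leaving room for positive Banach density, whereas the deliberate super-exponential sparsity of the block positions simultaneously annihilates the upper density.
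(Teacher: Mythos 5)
Your proposal is correct in substance and follows essentially the same route as the paper's proof: both directions are run through Theorem~\ref{shiftupper}, with the weights built in sparsely placed blocks of growing length inside which the ratio $W^1_{j,n}/W^2_{j,n}$ realizes each target pattern periodically with a fixed step (long windows then give positive upper Banach density), while off the blocks the ratio stays far from any fixed target value, so any set satisfying condition (2)(b) for that target lies in the zero-upper-density union of blocks. The one slip is verbal rather than structural: $A$ must be the union of the good points over \emph{all} blocks carrying the datum $D$ (taking a single block gives a finite set, hence $\overline{\text{Bd}}(A)=0$), which is clearly what you intend in the clause ``since $D$ is carried by blocks of unbounded length.''
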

\begin{proof}
Let $n_l=10^l$ for any $l\ge 1$ and let $(y_l)_{l\ge 1}$ be a dense sequence in $\ell^p(\mathbb{N})$ such that
\[\text{deg}(y_l)=l\quad\text{and}\quad \frac{1}{l}\le \min_{i\le l} |y_{l,i}|\le \max_{i\le l} |y_{l,i}|\le l.\]

We consider $T_1=B_v$ and $T_2=B_w$ where for every $n\ge 2$, 
\[v_n=\begin{cases}
1 & \textrm{if $n=n_l+1,$}\\
  \frac{y_{l,n-n_l-1}}{y_{l,n-n_l}} & \textrm{if $n\in [n_l+2,n_l+l]$ and $|y_{l,n-n_l}|\ge |y_{l,n-n_l-1}|,$}\\
  1 & \textrm{if $n\in [n_l+2,n_l+l]$ and $|y_{l,n-n_l}|< |y_{l,n-n_l-1}|,$}\\
 2 & \textrm{otherwise,}
\end{cases}\]
and
\[w_n=\begin{cases}
y_{l,1}\frac{W^{1}_{1,n_l-1}}{W^{2}_{1,n_l-1}} & \textrm{if $n=n_l+1,$}\\
   1 & \textrm{if $n\in [n_l+2,n_l+l]$ and $|y_{l,n-n_l}|\ge |y_{l,n-n_l-1}|,$}\\
  \frac{y_{l,n-n_l}}{y_{l,n-n_l-1}} & \textrm{if $n\in [n_l+2,n_l+l]$ and $|y_{l,n-n_l}|< |y_{l,n-n_l-1}|,$}\\
 3 & \textrm{otherwise,}
\end{cases}\]
where $W^1_{l, n}=\prod_{\nu=1}^nv_{l+\nu}$ and $W^2_{l, n}=\prod_{\nu=1}^n w_{l+\nu}$.
It follows by induction that for every $l \ge 1$, every $i\in [l]$,

\[
\frac{W^{2}_{1,n_l+i-1}}{W^{1}_{1,n_l+i-1}}=y_{l,i}.
\]

We also remark that $B_v$ and $B_w$ are continuous since $|v_n|\le 2$ for every $n$ and $\sup_n |w_n|<\infty$. Indeed, it suffices to notice that if $l\ge 2$,
\begin{align*}
|y_{l,1}|\frac{|W^{1}_{1,n_l-1}|}{|W^{2}_{1,n_l-1}|}&\le l \Big(\frac{2}{3}\Big)^{n_l-n_{l-1}-l+1}\frac{|W^{1}_{1,n_{l-1}+l-2}|}{|W^{2}_{1,n_{l-1}+l-2}|}\\
&=l \Big(\frac{2}{3}\Big)^{n_l-n_{l-1}-l+1}\frac{1}{|y_{l-1,l-1}|}\\
&\le l(l-1)\Big(\frac{2}{3}\Big)^{9 \cdot 10^{l-1}-l+1}.
\end{align*}

Moreover, $B_v$ is upper frequently hypercyclic since 
 for every $l\ge 1$, every $n\in [n_l,n_{l+1})$,
\[|W^{1}_{1,n}|\ge \Big(\frac{1}{l^2}\Big)^{l^2}2^{n-l^2}\]
and $B_w$ is upper frequently hypercyclic
since for every $l\ge 1$, every $n\in [n_l,n_{l+1})$,
\[|W^{2}_{1,n}|\ge \frac{1}{l}|W^{1}_{1,n_l-1}| \Big(\frac{1}{l^2}\Big)^{l}3^{n-n_l-l-1}\ge
\Big(\frac{1}{l^2}\Big)^{l^2+l+1}2^{n-l^2-2l-2}.
 \]
We have thus
\[\sum_{n\ge 1}\frac{1}{|W^{1}_{1,n}|^p}<\infty\quad \text{and}\quad \sum_{n\ge 1}\frac{1}{|W^{2}_{1,n}|^p}<\infty.\]

We now show that the operators $B_v$ and $B_w$ are disjoint reiteratively hypercyclic thanks to Theorem~\ref{shiftupper}. Let $\varepsilon>0$, $l\ge 1$, $(a_{s,i})_{s\in [2], i\in [l]}$ with $a_{s,i}\ne 0$.
We let $\gamma=\min \{\frac{a_{2,i}}{a_{1,i}}:i\in  [l]\}$ and $\Gamma=\max \{\frac{a_{2,i}}{a_{1,i}}:i\in  [l]\}+1$.

There exists $M\ge 1$ such that the following two conditions hold,
\[\frac{\|w\|^{lp}_{\infty}}{(\frac{\gamma}{2})^p(\min_{i\in [l]}\frac{|W^2_{1,i-1}|^p}{|W^1_{1,i-1}|^p})}\sum_{m\ge M}\frac{1}{|W^{1}_{1,m}|^p}<\varepsilon \mbox{ and }\]

\[ 2^{lp}2^p\Gamma^p \big(\max_{i\in [l]}\frac{\abs{W^2_{1,i-1}}^p}{\abs{W^1_{1,i-1}}^p}\big)\sum_{m\ge M}\frac{1}{|W^2_{1,m}|^p}< 
\varepsilon,\]
where $W^s_{i,0}=1$. Furthermore, by density of the sequence $(y_l)$, there exists an increasing sequence $(L_k)_{k\ge 1}$ such that for every $k\ge 1$, $(L_k-l)/M\ge k$ and for every $0\le r\le k$,
\[\max_{i\le l}\Big|\frac{a_{2,i}}{a_{1,i}}-y_{L_k,rM+i}\frac{W^1_{1,i-1}}{W^2_{1,i-1}}\Big|<\min\{\varepsilon/2,\gamma/2, \varepsilon\gamma^2/4, \Gamma\}.\]
We then have for every $k\ge 1$, every $0\le r\le k$, every $i\in [l]$,
\begin{align*}
\Big|\frac{W^2_{i,n_{L_k}+rM}}{W^1_{i,n_{L_k}+rM}}-\frac{a_{2,i}}{a_{1,i}} \Big|&
=\Big|\frac{W^1_{1,i-1}}{W^2_{1,i-1}}\frac{W^2_{1,n_{L_k}+rM+i-1}}{W^1_{1,n_{L_k}+rM+i-1}}-\frac{a_{2,i}}{a_{1,i}} \Big|\\
&=\Big|\frac{W^1_{1,i-1}}{W^2_{1,i-1}}y_{L_k,rM+i}-\frac{a_{2,i}}{a_{1,i}} \Big|<\varepsilon/2,
\end{align*}
and 
\begin{align*}
\Big|\frac{W^1_{i,n_{L_k}+rM}}{W^2_{i,n_{L_k}+rM}}-\frac{a_{1,i}}{a_{2,i}} \Big|
&=\Big|\frac{W^2_{1,i-1}}{W^1_{1,i-1}}\frac{1}{y_{L_k,rM+i}}-\frac{a_{1,i}}{a_{2,i}} \Big|\\
&=\frac{\Big|\frac{W^1_{1,i-1}}{W^2_{1,i-1}}y_{L_k,rM+i}-\frac{a_{2,i}}{a_{1,i}}\Big|}{\Big|\frac{W^1_{1,i-1}}{W^2_{1,i-1}}y_{L_k,rM+i} \frac{a_{2,i}}{a_{1,i}}\Big|}\\
&\le \frac{2\Big|\frac{W^1_{1, i-1}}{W^2_{1, i-1}}y_{L_k,rM+i}-\frac{a_{2,i}}{a_{1,i}} \Big|}{\gamma^2}<\varepsilon/2.
\end{align*}
Moreover, by letting $A=\{n_{L_k}+rM: k\ge 1,\ 0\le r\le k\}$, we have $\overline{\text{Bd}}(A)>0$. We can also remark that since for every $k\ge 1$, every $0\le r\le k$,
\[\max_{i\le l}\left|\frac{a_{2,i}}{a_{1,i}}-y_{L_k,rM+i}\frac{W^1_{1,i-1}}{W^2_{1,i-1}}\right|<\gamma/2,\]
we have 
\[\inf_{k\ge 1,\  0\le r\le k,\ i\le l}|y_{L_k,rM+i}|\ge \frac{\gamma}{2}\min_{i\in [l]}\abs{\frac{W^2_{1,i-1}}{W^1_{1,i-1}}}\]
and thus for any $i\in [l]$, any $n\in A$, 
\begin{align*}
\sum_{m\in A, m>n}\frac{|W^1_{i+m-n,n}|^p}{|W^2_{i,m}|^p}&\le
\|w\|^{lp}_{\infty}\sum_{m\in A, m>n}\frac{|W^1_{i+m-n,n}|^p}{|W^2_{1,m+i-1}|^p}\\
&\le \frac{\|w\|^{lp}_{\infty}}{\inf_{k\ge 1,\ 0\le r\le k,\ i\le l}|y_{L_k,rM+i}|^p}\sum_{m\in A, m>n}\frac{|W^1_{i+m-n,n}|^p}{|W^1_{1,m+i-1}|^p}\\
&\le \frac{\|w\|^{lp}_{\infty}}{(\frac{\gamma}{2})^p\min_{i\in [l]}\abs{\frac{W^2_{1,i-1}}{W^1_{1,i-1}}}^p}\sum_{m\in A, m>n}\frac{1}{|W^1_{1,i+m-n-1}|^p}\\
&\le \frac{\|w\|^{lp}_{\infty}}{(\frac{\gamma}{2})^p\min_{i\in [l]}\abs{\frac{W^2_{1,i-1}}{W^1_{1,i-1}}}^p}\sum_{m\ge M}\frac{1}{|W^1_{1,m}|^p}\le 
\varepsilon,
\end{align*}
and
\begin{align*}
\sum_{m\in A, m>n}\frac{|W^2_{i+m-n,n}|^p}{|W^1_{i,m}|^p}&\le
2^{lp}\sum_{m\in A, m>n}\frac{|W^2_{i+m-n,n}|^p}{|W^1_{1,m+i-1}|^p}\\
&\le 2^{lp} \sup_{k\ge 1,\ 0\le r\le k,\ i\le l}|y_{L_k,rM+i}|^p\sum_{m\in A, m>n}\frac{|W^2_{i+m-n,n}|^p}{|W^2_{1,m+i-1}|^p}\\
&\le 2^{lp}2^p \Gamma^p \max_{i\in [l]}\abs{\frac{W^2_{1,i-1}}{W^1_{1,i-1}}}^p\sum_{m\in A, m>n}\frac{1}{|W^2_{1,i+m-n-1}|^p}\\
&\le 2^{lp}2^p\Gamma^p \max_{i\in [l]}\abs{\frac{W^2_{1,i-1}}{W^1_{1,i-1}}}^p\sum_{m\ge M}\frac{1}{|W^2_{1,m}|^p}\le 
\varepsilon.
\end{align*}
We can therefore deduce from Theorem~\ref{shiftupper} that $B_v$ and $B_w$ are disjoint reiteratively hypercyclic.\\

It remains to show that $B_v$ and $B_w$ are not disjoint upper frequently hypercyclic.
Note that for every $1\le n<n_1$, $\frac{W^{2}_{1,n}}{W^{1}_{1,n}}=\frac{3^n}{2^n}\ge \frac{3}{2}$ and for every $l\ge 1$, every $n\in (n_l+2l,n_{l+1}-1]$,
\[\frac{|W^{2}_{1,n}|}{|W^{1}_{1,n}|}=\frac{|W^{2}_{1,n_l+l-1}|}{|W^{1}_{1,n_l+l-1}|}\Big(\frac{3}{2}\Big)^{n-n_l-l+1}\ge \frac{1}{l}\Big(\frac{3}{2}\Big)^{l+2}\ge \frac{3}{2}.\]
We get
\[\{n\ge 1: \Big|\frac{W^{2}_{1,n}}{W^{1}_{1,n}}-1\Big|< \frac{1}{2}\}\subset \bigcup_{l\ge 1}[n_l,n_l+2l]\]
and since $\overline{\text{dens}}\Big(\bigcup_{l\ge 1}[n_l,n_l+2l]\Big)=0$, it follows from Theorem~\ref{shiftupper} that the shifts $B_v$ and $B_w$ cannot be disjoint upper frequently hypercyclic.
\end{proof}


\begin{theorem}
There exist two frequently hypercyclic weighted shifts $B_v$ and $B_w$ on $\ell^p(\mathbb{N})$ such that
$B_v$ and $B_w$ are disjoint upper frequently hypercyclic but not disjoint frequently hypercyclic.  
\end{theorem}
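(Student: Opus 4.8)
The plan is to mimic the block construction of the previous theorem, but to tune the ratio sequence so that every \emph{non-diagonal} target is matched on a set of positive upper density and yet only on sets of zero lower density. Write $r_m=W^2_{1,m}/W^1_{1,m}=\prod_{\nu=2}^{m+1}w_\nu/v_\nu$, and recall that by Theorem~\ref{shiftupper} and Theorem~\ref{shiftlp} disjoint upper frequent (resp.\ frequent) hypercyclicity of two weighted shifts on $\ell^p(\N)$ is governed by the sets of times $n$ at which the quotients $W^s_{j,n}/W^t_{j,n}=r_{j+n}/r_j$ approximate the prescribed ratios $a_{s,j}/a_{t,j}$: for $\A_{ud}$ these sets must be chosen in $\A_{ud}$, while for $\A_{ld}$ they must contain members of a disjoint family in $\A_{ld}$. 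The idea is to keep the weights \emph{frozen} at $v_n=w_n=2$ by default, so that $r_m\equiv1$ and both products grow like $2^m$, and to perturb them only inside a sparse sequence of long blocks.

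First I would fix a dense sequence $(y_l)_{l\ge1}\subset c_{00}$ with $\deg(y_l)=l$ and $1/l\le\min_i|y_{l,i}|\le\max_i|y_{l,i}|\le l$, choose block positions $b_k$ with super-exponential gaps (say $b_{k+1}=2^{b_k}$) and block lengths proportional to $b_k$, and fix a map $\sigma\colon\N\to\N$ hitting each value infinitely often. Inside the $k$-th block I repeat a \emph{super-period} made of a short tuning stretch of length $\approx l$ (with $l=\sigma(k)$), in which $r_m$ is driven from $1$ through the values dictated by $y_{\sigma(k)}$ and then returned to $1$, followed by a growth stretch of length $\approx\log l$ with $v_n=w_n=2$. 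As in the previous proof I would split each required ratio adjustment between $v$ and $w$ (shrinking whichever of the two is needed), so that $|v_n|,|w_n|\le2$ throughout; this gives continuity, and the growth stretches dominate the bounded shrinkage of the tuning stretches, so that $\sum_n|W^s_{1,n}|^{-p}<\infty$ and each of $B_v,B_w$ is frequently hypercyclic.

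For disjoint upper frequent hypercyclicity I would invoke Theorem~\ref{shiftupper}. Given a target $(a_{s,j})_{s\in[2],j\in[l]}$ and $\varepsilon>0$, I pick by density a $y_{\sigma(k)}$ of large degree whose leading coordinates repeat the ratio vector $(a_{2,j}/a_{1,j})_{j\le l}$, and let $A$ be the set of starting points of the tuning stretches inside the blocks assigned to that value of $\sigma$. Since each such block has length proportional to its right endpoint while carrying one good time per super-period of length $\approx l$, the set $A$ has positive upper density; condition (2)(b) holds by construction, and the summation condition (2)(a) follows from $\sum_n|W^s_{1,n}|^{-p}<\infty$ and the spacing of $A$, exactly as in the $\ell^p$ computations in the proof of Theorem~\ref{caraclp}. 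To rule out disjoint frequent hypercyclicity I would argue through condition (4) of Theorem~\ref{shiftlp}. Fix the non-diagonal target $l=1$, $a_{1,1}=1$, $a_{2,1}=2$: a time $n$ can satisfy $|W^2_{1,n}/W^1_{1,n}-2|<\tfrac12$ only when $r_{n+1}$ is close to $2$, hence only when $r$ is perturbed near $n$. As $r_m\equiv1$ outside the blocks and the blocks sit at the super-exponentially spaced positions $b_k$, the set $\{n:r_{n+1}\neq1\}$ is contained in $\bigcup_k[b_k,b_k+O(b_k)]$, which has lower density $0$; therefore no $A_L\in\A_{ld}$ can be contained in the good-time set, condition (4) fails for this target, and $B_v,B_w$ are not disjoint frequently hypercyclic.

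The main obstacle is the tension between the three density requirements. Positive upper density of the good times forces the perturbed blocks to be long (a fixed fraction of their endpoint), whereas the ratio-tracing stretches carry weights of modulus $\le1$, so over a long block the products $W^s_{1,n}$ would stop growing and individual frequent hypercyclicity would be lost; simultaneously the blocks must stay sparse to kill the lower density. The device reconciling all three is the super-period: interleaving the short tuning stretches with slightly longer $v_n=w_n=2$ growth stretches keeps the products increasing geometrically and the weights bounded, produces one good time per super-period (hence positive density within each block and positive upper density overall), and confines every non-diagonal match to the sparse blocks (hence zero lower density). Making the length bookkeeping precise—so that the geometric growth beats the bounded tuning shrinkage while the good-time density inside a block remains bounded below—is the delicate point.
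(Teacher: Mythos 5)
Your overall strategy is the same as the paper's (block construction tracing a dense sequence of ratio vectors, positive upper density from blocks whose length is a fixed fraction of their position, lower density killed by super-exponential sparsity of the blocks, non-disjointness via one bad target and the necessity direction of Theorem~\ref{shiftlp}), with two variations: you freeze the default weights at $v_n=w_n=2$ so the bad target is the ratio $2$ rather than the paper's ratio $1$, and you try to recover the tuning-stretch shrinkage by growth stretches interleaved \emph{inside} each block, where the paper instead dilutes the block (the parameter $\alpha_J$ with $J^{2\alpha_J}\le 4/3$) and lets the long inter-block stretches supply the growth. The first variation is cosmetic and fine; the second contains a genuine quantitative error that breaks the proof as written. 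In a tuning stretch the ratio $r_m$ must pass through the consecutive coordinates of $y_l$, which lie anywhere in $[1/l,l]$ in modulus; consecutive coordinates can jump from $l$ to $1/l$, so the weight being shrunk at such a step is of size $1/l^2$, and over a stretch of length $\approx l$ the products $W^s_{1,n}$ can fall by a factor of order $l^{-2l}$. This is not the ``bounded shrinkage'' you claim: a growth stretch of length $\approx\log l$ only recovers a factor $2^{\log l}=l^{\log 2}$, so each super-period has net shrinkage of order $l^{-2l+\log 2}$, and over the $\approx cb_k/(l+\log l)$ super-periods in a block the products collapse far below $1$. Then $\sum_n|W^s_{1,n}|^{-p}=\infty$, so $B_v$ and $B_w$ are not even individually frequently hypercyclic, and conditions (1) and (2)(a) of Theorem~\ref{shiftupper}, on which the rest of your argument rests, fail. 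The fix is to make the growth stretches of length at least $\approx 2l\log_2 l$ (proportional to the worst-case up-variation of $\log|y_l|$); one good time per super-period of length $O(l\log l)$ still gives positive upper density, so your scheme survives, but the bookkeeping you call ``the delicate point'' is exactly where the stated parameters are wrong.

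A second, smaller gap: in condition (2)(a) of Theorem~\ref{shiftupper} the tail sums are controlled by the \emph{spacing} of $A$, and that spacing must be at least an $M_\varepsilon$ depending on $\varepsilon$ (so that $\sum_{m\ge M_\varepsilon}|W^s_{1,m}|^{-p}$ is small). This is precisely why the paper carries the second parameter $\varphi_2(l)=M$ and inserts gaps of length $M$, filled with coordinates $1$, between the repetitions of $y_J$ inside $z_l$. In your construction the spacing of good times is the super-period length, which depends only on $\sigma(k)$; and if, as your phrase ``repeat the ratio vector'' suggests, the approximating $y_{\sigma(k)}$ lists the target ratios back-to-back, consecutive good times are only $l$ apart, which is too close for small $\varepsilon$. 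You can repair this by taking exactly one good time per tuning stretch and choosing the approximant $y_m$ with $m\ge M_\varepsilon$ (your ``of large degree''), or by padding between repetitions as the paper does, but as written this step is not justified by ``exactly as in the $\ell^p$ computations.''
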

\begin{proof}
Let $(n_{l})_{l\ge 1}$ be a rapidly increasing sequence such that
\[\frac{n_{l}}{n_{l+1}}\to 0,\quad \sup_l l^2\Big(\frac{2}{3}\Big)^{n_{l}-2n_{l-1}}<\infty \quad\text{and}\quad \sum_{l\ge 1} n_l\left(\frac{1}{\Big(\frac{3}{4}\Big)^{\sum_{j\le l}n_j}2^{n_l-\sum_{j< l}n_j}}\right)^p<\infty.\]
Let $(y_l)_{l\ge 1}$ be a dense sequence in $\ell^p(\mathbb{N})$ 
such that \[\text{deg}(y_l)=l\quad\text{and}\quad \frac{1}{l}\le \min_{j\le l} |y_{l,j}|\le \max_{j\le l} |y_{l,j}|\le l.\]
Let $\varphi_1:\mathbb{N}\to \mathbb{N}$ and $\varphi_2:\mathbb{N}\to \mathbb{N}$ such that $\varphi_1(l)=J$ implies $l\geq J$ and for every $J,M\ge 1$, $|\{l: \varphi_1(l)=J \text{ and }\varphi_2(l)=M\}|=\infty$.

Let $(\alpha_j)_{j\ge 1}$ such that $0<\alpha_j\le 1$ and $j^{2\alpha_j}\le \frac{4}{3}$. 
Let $l\ge 1$. If $\varphi_1(l)=J \text{ and }\varphi_2(l)=M$, we let
\[z_l=\sum_{r\in [0, (\alpha_J n_l-J)/(J+M)]\cap \N}\sum_{j=1}^{J} y_{J,j}e_{r(J+M)+j}+\sum_{k\in [1,\alpha_J n_l]\backslash\bigcup_{r\in [0, (\alpha_J n_l-J)/(J+M)]\cap \N}[r(J+M)+1,r(J+M)+J]}e_k.\]
We deduce that 
\[\text{deg}(z_l)=\lfloor \alpha_{\varphi_1(l)}n_l\rfloor \quad\text{and}\quad \frac{1}{\varphi_1(l)}\le \min_{j\le \text{deg}(z_l)} |z_{l,j}|\le \max_{j\le \text{deg}(z_l)} |z_{l,j}|\le \varphi_1(l).\]

We consider $T_1=B_v$ and $T_2=B_w$ where for every $n\ge 2$, 
\[v_n=\begin{cases}
1 & \textrm{if $n=n_l+1$}\\
  \frac{z_{l,n-n_l-1}}{z_{l,n-n_l}} & \textrm{if $n\in [n_l+2,n_l+\text{deg}(z_l)]$ and $|z_{l,n-n_l}|\ge |z_{l,n-n_l-1}|$}\\
  1 & \textrm{if $n\in [n_l+2,n_l+\text{deg}(z_l)]$ and $|z_{l,n-n_l}|< |z_{l,n-n_l-1}|$}\\
 2 & \textrm{otherwise}
\end{cases}.\]

and
\[w_n=\begin{cases}
z_{l,1}\frac{W^{1}_{1,n_l-1}}{W^{2}_{1,n_l-1}} & \textrm{if $n=n_l+1$}\\
   1 & \textrm{if $n\in [n_l+2,n_l+\text{deg}(z_l)]$ and $|z_{l,n-n_l}|\ge |z_{l,n-n_l-1}|$}\\
  \frac{z_{l,n-n_l}}{z_{l,n-n_l-1}} & \textrm{if $n\in [n_l+2,n_l+\text{deg}(z_l)]$ and $|z_{l,n-n_l}|< |z_{l,n-n_l-1}|$}\\
 3 & \textrm{otherwise}
\end{cases}.\]
where $W^1_{l, n}=\prod_{\nu=1}^nv_{l+\nu}$ and $W^2_{l, n}=\prod_{\nu=1}^n w_{l+\nu}$.
It follows by induction that for every $l \ge 1$, every $i\in [\text{deg}(z_l)]$,

\[
\frac{W^{2}_{1,n_l+i-1}}{W^{1}_{1,n_l+i-1}}=z_{l,i}.
\]

We also remark that $B_v$ and $B_w$ are continuous since $|v_n|\le 2$ for every $n$ and $\sup_n |w_n|<\infty$. Indeed, it suffices to notice that if $l\ge 2$,
\begin{align*}
|z_{l,1}|\frac{|W^{1}_{1,n_l-1}|}{|W^{2}_{1,n_l-1}|}&\le l\Big(\frac{2}{3}\Big)^{n_l-n_{l-1}-\text{deg}(z_{l-1})}\frac{|W^{1}_{1,n_{l-1}+\text{deg}(z_{l-1})-1}|}{|W^{2}_{1,n_{l-1}+\text{deg}(z_{l-1})-1}|}\\
&= l \Big(\frac{2}{3}\Big)^{n_l-n_{l-1}-\text{deg}(z_{l-1})}\frac{1}{|z_{l-1,\text{deg}(z_{l-1})}|}\\
&\le l(l-1)\Big(\frac{2}{3}\Big)^{n_l-2n_{l-1}}.
\end{align*}

Moreover, $B_v$ is frequently hypercyclic. Indeed, 
 for every $l\ge 1$ and every $n\in [n_l,n_l+\text{deg}(z_l)]\subset [n_l,2n_l]$, we have
\begin{align*}
|W^{1}_{1,n}|&\ge \prod_{j\le l}\Big(\frac{1}{(\varphi_1(j))^2}\Big)^{\text{deg}(z_j)}2^{n_l-\sum_{j< l}\text{deg}(z_j)}\\
&\ge \prod_{j\le l}\Big(\frac{1}{(\varphi_1(j))^2}\Big)^{\alpha_{\varphi_1(j)}n_j}2^{n_l-\sum_{j< l}n_j}\\
&\ge \Big(\frac{3}{4}\Big)^{\sum_{j\le l}n_j}2^{n_l-\sum_{j< l}n_j}.
\end{align*}
 Set $\Gamma_l:=\Big(\frac{3}{4}\Big)^{\sum_{j\le l}n_j}2^{n_l-\sum_{j< l}n_j}$. Hence, by our assumption on $(n_l)_{l\ge 1}$, it follows that
\begin{align*}
\sum_{n\geq 1}\frac{1}{\abs{W^1_{1, n}}^p}&\leq \sum_{l=1}^\infty\sum_{n=n_l}^{n_l+\text{deg}(z_l)}\left(\frac{1}{\Gamma_l}\right)^p+\sum_{l=1}^\infty\sum_{n=n_l+\text{deg}(z_l)+1}^{n_{l+1}-1}\left(\frac{1}{\Gamma_l \cdot 2^{n-n_l-\text{deg}(z_l)}}\right)^p\\
&\leq \sum_{l=1}^\infty n_l\left(\frac{1}{\Gamma_l}\right)^p+\sum_{l=1}^\infty\left(\frac{1}{\Gamma_l}\right)^p\sum_{n=1}^{n_{l+1}-n_l-\text{deg}(z_l)-1}\left(\frac{1}{2^n}\right)^p\\
&< \sum_{l=1}^\infty n_l\left(\frac{1}{\Gamma_l}\right)^p + \sum_{l=1}^\infty \left(\frac{1}{\Gamma_l}\right)^p<\infty.
\end{align*} 

 Similarly, the weighted shift $B_w$ is also frequently hypercyclic
since for every $l\ge 1$, every  $n\in [n_l,n_l+\text{deg}(z_l))]\subset [n_l,2n_l]$,
\begin{align*}
|W^{2}_{1,n}|&\ge \frac{1}{\varphi_1(l)}|W^{1}_{1,n_l-1}| \Big(\frac{1}{(\varphi_1(l))^2}\Big)^{\text{deg}(z_l)-1}\\
&\ge \Big(\frac{3}{4}\Big)^{\sum_{j\le l-1}n_j}2^{n_{l-1}-\sum_{j< l-1}n_j} 2^{n_l-2n_{l-1}}\Big(\frac{3}{4}\Big)^{n_l}\\
 &\ge \Big(\frac{3}{4}\Big)^{\sum_{j\le l}n_j}2^{n_{l}-\sum_{j< l}n_j}=\Gamma_l.
 \end{align*}

We now show that the operators $B_v$ and $B_w$ are disjoint upper frequently hypercyclic thanks to Theorem~\ref{shiftupper}. Let $\varepsilon>0$, $l\ge 1$, $(a_{s,i})_{s\in [2], i\in [l]}$ with $a_{s,i}\ne 0$.
We let $\gamma=\min \{\frac{a_{2,i}}{a_{1,i}}:i\in  [l]\}$ and $\Gamma=\max \{\frac{a_{2,i}}{a_{1,i}}:i\in  [l]\}+1$.

There exists $M\ge 1$ such that the following two conditions hold
\[\frac{\|w\|^{lp}_{\infty}}{(\frac{\gamma}{2})^p(\min_{i\in [l]}\frac{|W^2_{1,i-1}|^p}{|W^1_{1,i-1}|^p})}\sum_{m\ge M}\frac{1}{|W^{1}_{1,m}|^p}<\varepsilon, \mbox{ and }\]

\[2^{lp}2^p\Gamma^p(\max_{i\in [l]}\frac{|W^2_{1,i-1}|^p}{|W^1_{1,i-1}|^p}) \sum_{m\ge M}\frac{1}{|W^{2}_{1,m}|^p}<\varepsilon.\]
where $W^s_{i,0}=1$. Let $J\ge l$ such that
\[\max_{i\le l}\Big|\frac{a_{2,i}}{a_{1,i}}-y_{J,i}\frac{W^1_{1,i-1}}{W^2_{1,i-1}}\Big|<\min\{\varepsilon/2,\gamma/2, \varepsilon\gamma^2/4, \Gamma\}.\]
We consider an increasing sequence $(L_k)_{k\ge 1}$ such that for every $k\ge 1$, $\varphi_1(L_k)=J$ and $\varphi_2(L_k)=M$.
We then have for every $k\ge 1$, every $0\le r\le (\alpha_Jn_{L_k}-J)/(J+M)$, every $i\in [l]$,
\begin{align*}
\Big|\frac{W^2_{i,n_{L_k}+r(J+M)}}{W^1_{i,n_{L_k}+r(J+M)}}-\frac{a_{2,i}}{a_{1,i}} \Big|&
=\Big|\frac{W^1_{1,i-1}}{W^2_{1,i-1}}\frac{W^2_{1,n_{L_k}+r(J+M)+i-1}}{W^1_{1,n_{L_k}+r(J+M)+i-1}}-\frac{a_{2,i}}{a_{1,i}} \Big|\\
&=\Big|\frac{W^1_{1,i-1}}{W^2_{1,i-1}}z_{L_k,r(J+M)+i}-\frac{a_{2,i}}{a_{1,i}} \Big|\\
&=\Big|\frac{W^1_{1,i-1}}{W^2_{1,i-1}}y_{J,i}-\frac{a_{2,i}}{a_{1,i}} \Big|<\varepsilon/2
\end{align*}
and 
\begin{align*}
\Big|\frac{W^1_{i,n_{L_k}+r(J+M)}}{W^2_{i,n_{L_k}+r(J+M)}}-\frac{a_{1,i}}{a_{2,i}} \Big|
&=\Big|\frac{W^2_{1,i-1}}{W^1_{1,i-1}}\frac{1}{z_{L_k,r(J+M)+i}}-\frac{a_{1,i}}{a_{2,i}} \Big|\\
&=\frac{\Big|\frac{W^1_{1,i-1}}{W^2_{1,i-1}}z_{L_k,r(J+M)+i}-\frac{a_{2,i}}{a_{1,i}}\Big|}{\Big|\frac{W^1_{1,i-1}}{W^2_{1,i-1}}z_{L_k,r(J+M)+i} \frac{a_{2,i}}{a_{1,i}}\Big|}\\
&\le \frac{2\Big|\frac{W^1_{1, i-1}}{W^2_{1, i-1}}y_{J,i}-\frac{a_{2,i}}{a_{1,i}} \Big|}{\gamma^2}<\varepsilon/2.
\end{align*}
Moreover, by letting $A=\{n_{L_k}+r(J+M): k\ge 1,\ 0\le r\le (\alpha_Jn_{L_k}-J)/(J+M)\}$, we have $\overline{\text{dens}}(A)>0$. Indeed, for every $k\ge 1$, we have
\begin{align*}
|A\cap [0, (1+\alpha_J) n_{L_k}-J]|&\ge |\{n_{L_k}+r(J+M):0\le r\le (\alpha_J n_{L_k}-J)/(J+M)\}|\\
&\ge \frac{\alpha_J n_{L_k}-J}{J+M},
\end{align*}
and thus, since the sequence $(n_{L_k})_k$ tends to infinity, we have
\begin{align*}
\overline{\text{dens}}(A)&\ge \limsup_k \frac{\alpha_J n_{L_k}-J}{J+M}\frac{1}{(1+\alpha_J) n_{L_k}-J+1}\\
&\ge \limsup_k \frac{\alpha_J n_{L_k}-J}{J+M}\frac{1}{(1+\alpha_J) n_{L_k}}= \frac{\alpha_J}{(1+\alpha_J)(J+M)}>0.
\end{align*}

We can also remark that since for every $i\in [l]$,
\[\max_{i\le l}|\frac{a_{2,i}}{a_{1,i}}-y_{J,i}\frac{W^1_{1,i-1}}{W^2_{1,i-1}}|<\gamma/2,\]
we have 
\[\min_{i\le l}|y_{J,i}|\ge \frac{\gamma}{2}\min_{i\in [l]}\frac{W^2_{1,i-1}}{W^1_{1,i-1}}.\]
Therefore, for any $i\in [l]$, any $n\in A$, 
\begin{align*}
\sum_{m\in A, m>n}\frac{|W^1_{i+m-n,n}|^p}{|W^2_{i,m}|^p}&\le
\|w\|^{lp}_{\infty}\sum_{m\in A, m>n}\frac{|W^1_{i+m-n,n}|^p}{|W^2_{1,m+i-1}|^p}\\
&\le \frac{\|w\|^{lp}_{\infty}}{\min_{i\le l}|y_{J,i}|^p}\sum_{m\in A, m>n}\frac{|W^1_{i+m-n,n}|^p}{|W^1_{1,m+i-1}|^p}\\
&\le \frac{\|w\|^{lp}_{\infty}}{(\frac{\gamma}{2})^p\min_{i\in [l]}\abs{\frac{W^2_{1,i-1}}{W^1_{1,i-1}}}^p}\sum_{m\in A, m>n}\frac{1}{|W^1_{1,i+m-n-1}|^p}\\
&\le \frac{\|w\|^{lp}_{\infty}}{(\frac{\gamma}{2})^p\min_{i\in [l]}\abs{\frac{W^2_{1,i-1}}{W^1_{1,i-1}}}^p}\sum_{m\ge M}\frac{1}{|W^1_{1,m}|^p}\le 
\varepsilon
\end{align*}
and
\begin{align*}
\sum_{m\in A, m>n}\frac{|W^2_{i+m-n,n}|^p}{|W^1_{i,m}|^p}&\le
2^{lp}\sum_{m\in A, m>n}\frac{|W^2_{i+m-n,n}|^p}{|W^1_{1,m+i-1}|^p}\\
&\le 2^{lp} \max_{i\le l}|y_{J,i}|^p\sum_{m\in A, m>n}\frac{|W^2_{i+m-n,n}|^p}{|W^2_{1,m+i-1}|^p}\\
&\le 2^{lp} 2^p \Gamma^p \max_{i\in [l]}\abs{\frac{W^2_{1,i-1}}{W^1_{1,i-1}}}^p\sum_{m\in A, m>n}\frac{1}{|W^2_{1,i+m-n-1}|^p}\\
&\le 2^{lp} 2^p \Gamma^p\max_{i\in [l]}\abs{\frac{W^2_{1,i-1}}{W^1_{1,i-1}}}^p\sum_{m\ge M}\frac{1}{|W^2_{1,m}|^p}\le 
\varepsilon.
\end{align*}
We can thus deduce from Theorem~\ref{shiftupper} that $B_v$ and $B_w$ are disjoint upper frequently hypercyclic.\\

On the other hand, $B_v$ and $B_w$ are not disjoint frequently hypercyclic. Indeed, for every $1\le n<n_1$, $\frac{W^{2}_{1,n}}{W^{1}_{1,n}}=\frac{3^n}{2^n}\ge \frac{3}{2}$ and for every $l\ge 1$, every $n\in (2n_l+l,n_{l+1}-1]$,
\[\frac{|W^{2}_{1,n}|}{|W^{1}_{1,n}|}=\frac{|W^{2}_{1,2n_l-1}|}{|W^{1}_{1,2n_l-1}|}\Big(\frac{3}{2}\Big)^{n-2n_l+1}\ge \frac{1}{\varphi_1(l)}\Big(\frac{3}{2}\Big)^{l+2}\ge \frac{1}{l}\Big(\frac{3}{2}\Big)^{l+2}\ge \frac{3}{2}.\]
We deduce thus that
\[\{n\ge 1: \Big|\frac{W^{2}_{1,n}}{W^{1}_{1,n}}-1\Big|< \frac{1}{2}\}\subset \bigcup_{l\ge 1}[n_l,2n_l+l].\]
Therefore, since $\frac{n_{l}}{n_{l+1}}\to 0$, we get $\underline{\text{dens}}\Big(\bigcup_{l\ge 1}[n_l,2n_l+l]\Big)=0$ and it follows from Theorem~\ref{shiftlp} that $B_v$ and $B_w$ cannot be disjoint frequently hypercyclic.
\end{proof}

Conversely it is not possible to find two reiteratively hypercylic unilateral weighted shifts on $\ell^p(\mathbb{N})$ which are disjoint hypercyclic but not disjoint reiteratively hypercyclic. We show this in the next result.


\begin{theorem}
\label{theor_alhambra}
Let $T_1=T_{f_1,w_1},T_2=T_{f_2,w_2}$ be unilateral pseudo-shifts on $\ell^p(\mathbb{N})$ with $1\le p<\infty$. If $\A_{uBd}$ is $(f_1,f_2)$-almost partition regular and $f_1=f_2$ then $T_1$, $T_2$ are disjoint reiteratively hypercyclic if and only if $T_1$, $T_2$ are reiteratively hypercyclic and disjoint hypercyclic.
\end{theorem}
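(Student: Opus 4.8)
The forward implication is immediate. If $x$ is a disjoint reiteratively hypercyclic vector, then $(x,x)$ is an $\A_{uBd}$-hypercyclic vector for $T_1\oplus T_2$; since $\A_{uBd}\subseteq\A_\infty$ this already gives disjoint hypercyclicity, and taking open sets of the form $U\times X$ and $X\times U$ shows that $\{n:T_s^n x\in U\}\in\A_{uBd}$, so $x$ is reiteratively hypercyclic for each $T_s$. For the converse I would invoke the characterization in Theorem~\ref{simpcaraclp}, which is available precisely because $\A_{uBd}$ is $(f_1,f_2)$-almost partition regular by hypothesis. Its condition~(1) says exactly, via Proposition~\ref{propws} and Remark~\ref{remreg}, that each $T_s$ is reiteratively hypercyclic, so it holds by assumption; the whole task is therefore to produce, for a given $l$, a target $(a_{s,j})_{s\in[2],j\in[l]}$ with $a_{s,j}\neq 0$ and an $\varepsilon>0$, a set $A\in\A_{uBd}$ satisfying conditions (2)(a)--(b).

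The role of the hypothesis $f_1=f_2=:f$ is to collapse (2)(a)--(b) to a single requirement on the weight ratio. Setting $U_{i,n}:=W^1_{i,n}/W^2_{i,n}=\prod_{\nu=1}^n w_{1,f^\nu(i)}/w_{2,f^\nu(i)}$, condition (2)(b) becomes $\bigl|U_{i,n}-a_{1,i}/a_{2,i}\bigr|\le\varepsilon$ for all $i\le l$ and all $n\in A$ (together with the reciprocal inequality coming from the ordered pair $(2,1)$). Using the cocycle identity $W^s_{i,m+n}=W^s_{i,m}\,W^s_{f^m(i),n}$ one finds that for $s\neq t$ the summand in (2)(a) equals $\abs{W^s_{j,m-n}}^{-p}$ times a power of $U_{j,m}$. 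Hence, once (2)(b) confines the numbers $U_{j,m}$ ($j\le l$, $m\in A$) to a fixed annulus around $a_{1,j}/a_{2,j}\in\C\setminus\{0\}$, condition (2)(a) follows from the convergence of $\sum_n\abs{W^s_{j,n}}^{-p}$ in condition~(1), provided consecutive elements of $A$ are separated by a large gap $N_0$. Such a gap, together with the support-disjointness $f^m([l])\cap f^n([l])=\emptyset$ for $m\neq n$ in $A$, is exactly what almost partition regularity supplies after passing to a subset, and this costs nothing in summability since $\sum_n\abs{W^s_{j,n}}^{-p}<\infty$ makes every subseries converge. The entire problem thus reduces to the intrinsic density statement
\[
\overline{\text{Bd}}\bigl(\{n:\ \abs{U_{i,n}-a_{1,i}/a_{2,i}}\le\varepsilon\ \text{ for all } i\le l\}\bigr)>0.
\]

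Establishing this density is the main obstacle, and it is the one place where disjoint hypercyclicity is genuinely used. A priori disjoint hypercyclicity guarantees only that this matching set is infinite, so the real work is to upgrade ``matching infinitely often'' to ``matching on a set of positive upper Banach density.'' My plan is to exploit the cocycle once more: if $n_\ast$ is a single matching time for the target $(a_{1,i}/a_{2,i})_i$, then $U_{i,n_\ast+m}=U_{i,n_\ast}\,U_{f^{n_\ast}(i),m}$, so every $m$ with $U_{j,m}\approx 1$ for $j\in f^{n_\ast}([l])$ yields a new matching time $n_\ast+m$. By translation invariance of the upper Banach density, this reduces the problem for an arbitrary target to the single target $\mathbf 1$ on a finite index set, i.e. to showing $\overline{\text{Bd}}(\{m:U_{j,m}\approx 1\})>0$.

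To obtain the latter I would combine the \cite{CoMaSa} characterization of disjoint hypercyclic pseudo-shifts, specialised to $f_1=f_2$, with the Birkhoff-type criterion of Theorem~\ref{thmsimp}: the characterization furnishes matching configurations along a sequence of blocks whose lengths tend to infinity, and the summability from condition~(1) permits placing a suitably spaced arithmetic progression of matching times inside each block, exactly as in the explicit constructions of Section~4. This produces a set of positive upper Banach density (while keeping its upper density zero, in accordance with the counterexamples). The delicate point, and the crux of the argument, is precisely this block/arithmetic-progression construction, since it must reconcile the sparse matching that disjoint hypercyclicity alone provides with the long intervals demanded by the upper Banach density; everything else is a routine verification of the reductions described above.
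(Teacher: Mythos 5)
Your forward implication and your reduction of condition (2)(a) of Theorem~\ref{simpcaraclp} to condition (2)(b) --- via the cocycle identity $W^s_{i,n+m}=W^s_{i,n}W^s_{f^n(i),m}$, the annulus bound on the ratios, and large gaps in $A$ --- are correct, and they match the structure of the paper's argument. But the crux you yourself isolate, namely upgrading ``matching infinitely often'' (which is all that disjoint hypercyclicity gives) to ``matching on a set of positive upper Banach density,'' is left unproved, and the route you sketch for it does not work. Your reduction to the constant target $\mathbf 1$ via a single matching time $n_\ast$ and translation invariance is valid but is not progress: the reduced statement $\overline{\mathrm{Bd}}\bigl(\{m:\ U_{j,m}\approx 1,\ j\in f^{n_\ast}([l])\}\bigr)>0$ is an instance of exactly the same problem (target $\mathbf 1$ on a finite index set), and no hypothesis hands it to you. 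The tools you then invoke cannot fill the hole: Theorem~\ref{thmsimp} is a sufficient criterion whose hypothesis is precisely the positive-density statement you are trying to prove, so appealing to it is circular; the characterization of disjoint hypercyclicity from \cite{CoMaSa} provides matching times along some infinite set with no density or block structure whatsoever; and the constructions of Section~4 are not available here, because there the authors design the weights, whereas in this theorem the weights are given and one cannot ``place'' matching times anywhere --- one must find them.

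The missing idea, which is the heart of the paper's proof, is to apply disjoint hypercyclicity \emph{once per scale $k$ to an enlarged, weight-twisted target}, rather than many times to the original target. Fix $\gamma=\min\{\abs{a_{s,j}}/\abs{a_{t,j}}\}$ and $M>l$ with $\frac{2^p}{\gamma^p}\sum_{m\ge M}\abs{W^s_{j,m}}^{-p}\le\varepsilon$ for all $j\le l$, $s\in[2]$. For each $k$, put $L=\max_{j\in[l]} f^{kM}(j)$ and define a target $(b_{s,i})_{i\le L}$ by $b_{s,f^{k'M}(j)}=a_{s,j}\,(W^s_{j,k'M})^{-1}$ for $k'\le k$, $j\in[l]$, and $b_{s,i}=1$ otherwise. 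A \emph{single} matching time $n_k$ for $b$ on $[L]$ --- which disjoint hypercyclicity does provide --- yields, through the same cocycle identity you wrote down but used in the opposite direction, the whole arithmetic progression $n_k, n_k+M,\dots,n_k+kM$ of matching times for the original target $a$: at time $n_k+k'M$ the ratio factors as $\frac{W^1_{j,k'M}}{W^2_{j,k'M}}\cdot\frac{W^1_{f^{k'M}(j),n_k}}{W^2_{f^{k'M}(j),n_k}}$, and the twist by $(W^s_{j,k'M})^{-1}$ in $b$ is exactly what cancels the first factor. Taking $A=\{n_k+k'M:\ k\ge 1,\ 0\le k'\le k\}$ with $n_{k+1}\ge n_k+(k+1)M$ gives $\overline{\mathrm{Bd}}(A)\ge 1/M>0$ with gaps $\ge M$ built in, and conditions (2)(a)--(b) then follow by your own reductions. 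Without this twisted-target device (or an equivalent mechanism producing unboundedly long progressions of matching times from single matching events), your argument does not close.
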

\begin{proof}
If $T_1$, $T_2$ are disjoint reiteratively hypercyclic, it is obvious that $T_1$, $T_2$ are reiteratively hypercyclic and disjoint hypercyclic.\\

Suppose now that $T_1=T_{f_1,w_1}$, $T_2=T_{f_2,w_2}$ are reiteratively hypercyclic and disjoint hypercyclic with $f_1=f_2$. We show that $T_1$, $T_2$ are disjoint reiteratively hypercyclic by using Theorem~\ref{simpcaraclp}.\\

Since $T_1$ and $T_2$ are reiteratively hypercyclic, we already know that for any $j\ge 1$ and any $s\in [2]$, we have  $\sum_{n\ge 1}\frac{1}{|W^s_{j, n}|^p}<\infty$. Moreover, since $T_1$, $T_2$ are disjoint hypercyclic, we get that for any $l\ge 1$, any $(b_{s,j})_{s\in[2],j\in [l]}$ with $b_{s,j}\ne 0$, any $\varepsilon>0$, any $N\ge 0$, there exists $n\ge N$ such that for any $s\ne t\in [2]$, any $j\in f^n_t([l])\cap f_s^n([l])=f^n_t([l])$,
\[
\left|\frac{W^s_{f_s^{-n}(j),n}}{W^t_{f_t^{-n}(j),n}}-\frac{b_{s,f_s^{-n}(j)}}{b_{t,f_t^{-n}(j)}}\right|\le \varepsilon
\]

Let $l\ge 1$, $(a_{s,j})_{s\in[2],j\in [l]}$ with $a_{s,j}\ne 0$, and $\varepsilon>0$. We consider 
$\gamma=\min\{\frac{|a_{s,j}|}{|a_{t,j}|}: j \in [l], s\ne t\in [2]\}$ and $M> l$ such that for every $j\le l$, every $s\in[2]$,
\[ \frac{2^p}{\gamma^p} \sum_{m\ge M} \frac{1}{\abs{W^s_{j,m}}^p}\le \varepsilon.\]
Let $k\ge 1$, $L=\max\{f_s^{kM}(j):j\in [l]\}$ and $(b_{s,i})_{s\in [2],i\le L}$ with
$b_{s,f_s^{k'M}(j)}=a_{s,j}\Big(W^s_{j,k'M}\Big)^{-1}$ for every $k'\le k$ and $j\in [l]$ and $b_{s,i}=1$ otherwise. Note that $b_{s,i}$ is well-defined because $M>l$. By assumption, for any $N\ge 0$, there exists $n\ge N$ such that for any $s\ne t\in [2]$, any $j\in f^n_t([L])$,
\[
\Big(\max_{k'\le k, i\in [l]}\frac{W^s_{i,k'M}}{W^t_{i,k'M}}\Big)\left|\frac{W^s_{f_s^{-n}(j),n}}{W^t_{f_t^{-n}(j),n}}-\frac{b_{s,f_s^{-n}(j)}}{b_{t,f_t^{-n}(j)}}\right|\le \min\{\varepsilon,\gamma/2\} .
\]
We can thus consider an increasing sequence $(n_k)_{k}$ such that $n_{k+1} \geq n_k + (k+1) M$ for $k \in \N$ and for any $s\ne t\in [2]$, any $j\in f^n_t([L])$,
\[
\Big(\max_{k'\le k, i\in [l]}\frac{W^s_{i,k'M}}{W^t_{i,k'M}}\Big)\left|\frac{W^s_{f_s^{-n_k}(j),n_k}}{W^t_{f_t^{-n_k}(j),n_k}}-\frac{b_{s,f_s^{-n_k}(j)}}{b_{t,f_t^{-n_k}(j)}}\right|\le \min\{\varepsilon,\gamma/2\}.
\]

Let $A=\{n_k+k'M: k \geq 1, k'\le k\}$. We remark that $\overline{Bd}(A)>0$ and that for every $0\le k'\le k$,  any $j\in f^{n_k+k'M}_t([l])$, we have $j\in f^{n_k}_t(f_t^{k'M}([l]))\subset f^{n_k}_t([L])$ and thus
\begin{align*}
&\left|\frac{W^s_{f_s^{-n_k-k'M}(j),n_k+k'M}}{W^t_{f_t^{-n_k-k'M}(j),n_k+k'M}}-\frac{a_{s,f_s^{-n_k-k'M}(j)}}{a_{t,f_t^{-n_k-k'M}(j)}}\right|\\
&\quad= \left|\frac{W^s_{f_s^{-n_k-k'M}(j),k'M}}{W^t_{f_t^{-n_k-k'M}(j),k'M}}\frac{W^s_{f_s^{-n_k}(j),n_k}}{W^t_{f_t^{-n_k}(j),n_k}}-\Big(\frac{W^s_{f_s^{-n_k-k'M}(j),k'M}}{W^t_{f_t^{-n_k-k'M}(j),k'M}}\Big) \frac{b_{s,f_s^{-n_k}(j)}}{b_{t,f_t^{-n_k}(j)}}\right|\\
&\quad=\frac{|W^s_{f_s^{-n_k-k'M}(j),k'M}|}{|W^t_{f_s^{-n_k-k'M}(j),k'M}|}\left|\frac{W^s_{f_s^{-n_k}(j),n_k}}{W^t_{f_t^{-n_k}(j),n_k}}- \frac{b_{s,f_s^{-n_k}(j)}}{b_{t,f_t^{-n_k}(j)}}\right|\\
&\quad\le \min\{\varepsilon,\gamma/2\}.
\end{align*}
In particular, since $f_1=f_2$, we get for every $m\in A$, every $s\ne t\in [2]$, every $i\in [l]$,
\[\frac{|W^s_{i,m}|}{|W^t_{i,m}|}\ge \gamma/2.\]

Therefore, for any $s\ne t\in [2]$, any $n\in A$, any $1\le j\le l$,
\begin{align*}
\sum_{m\in A: f_t^m(j)\in f_s^n((l,+\infty))} \frac{\abs{W^s_{f_s^{-n}(f_t^m(j)),n}}^p}{\abs{W^t_{j,m}}^p}&\le 
\sum_{m\in A: \ m\ge n+M} \frac{\abs{W^s_{f_s^{m-n}(j),n}}^p}{\abs{W^t_{j,m}}^p}\\
&\le 
\sum_{m\in A: \ m\ge n+M} \frac{\abs{W^s_{j,m}}^p}{\abs{W^s_{j,m-n}}^p\abs{W^t_{j,m}}^p}\\
 &\le \frac{2^p}{\gamma^p}\sum_{m\ge M} \frac{1}{\abs{W^s_{j,m}}^p}\le \varepsilon.
\end{align*}
It follows from Theorem~\ref{simpcaraclp} that $T_1$, $T_2$ are disjoint reiteratively hypercyclic.
\end{proof}

\section{Questions}
 Theorem \ref{theor_alhambra} suggests the following natural question.
\begin{question}
 Is it possible to find two reiteratively hypercylic operators $T_1$ and $T_2$ which are disjoint hypercyclic but not disjoint reiteratively hypercyclic? Can $T_1$ and $T_2$ be taken as unilateral weighted shifts on $c_0(\mathbb{N})$?
\end{question}

We say that finitely many operators are densely disjoint $\mathcal{A}$-hypercyclic if the set of their disjoint $\mathcal{A}$-hypercyclic vectors are dense. If $T_1, \dots, T_N$ are disjoint $\A$-hypercyclic unilateral pseudo shifts on $\ell^p(\N)$ or $c_0(\N)$ for $\mathcal{A} = \mathcal{A}_\infty, \mathcal{A}_{uBd}, \mathcal{A}_{ud}$, or $\mathcal{A}_{ld}$, then it is obvious that $T_1, \dots, T_N$ are densely disjoint $\A$-hypercyclic since the perturbation of  
a disjoint $\mathcal{A}$-hypercyclic vector of $(T_1, \dots, T_N)$ by any vector $z\in c_{00}$ is still a disjoint $\mathcal{A}$-hypercyclic vector of $(T_1, \dots, T_N)$.

In \cite{SaSh14}, Sanders and Shkarin showed that every Banach space supports disjoint hypercyclic operators which fail to be densely disjoint hypercyclic. In \cite{MaPu}, the first and the last authors showed the existence of disjoint frequently hypercyclic operators $T_1,T_2$ such that $T_1 \oplus T_1$ and $T_2 \oplus T_2$ are not disjoint hypercyclic on $X \times X$ (that is, $T_1,T_2$ are not disjoint weakly mixing) and therefore, $T_1,T_2$ do not satisfy the Disjoint Hypercyclicity Criterion. We finish the paper with the following open question.

\begin{question}
If $T_1,T_2$ are disjoint frequently hypercyclic operators, must they be densely disjoint hypercyclic?
\end{question}

\end{document}